\numberwithin{equation}{section} 
\numberwithin{figure}{section} 
\theoremstyle{plain}
\newtheorem{thm}{Theorem}[section]
  \theoremstyle{definition}
  \newtheorem{defn}[thm]{Definition}
  \theoremstyle{plain}
  \newtheorem{lem}[thm]{Lemma}
  \theoremstyle{remark}
  \newtheorem{rem}[thm]{Remark}
 \theoremstyle{definition}
  \newtheorem{example}[thm]{Example}
  \theoremstyle{plain}
  \newtheorem{prop}[thm]{Proposition}
  \theoremstyle{remark}
  \newtheorem*{acknowledgement*}{Acknowledgement}
\begin{document}

\title{On Explicit Holmes-Thompson Area Formula in Integral Geometry}

\author{Yang Liu}

\date{October 29, 2009}
\begin{abstract}
In this article, we give an exposition on the Holmes-Thompson theory
developed by Alvarez. The space of geodesics in Minkowski space has
a symplectic structure which is induced by the projection from the
sphere-bundle. we show that it can be also obtained from the symplectic
structure on the tangent bundle of the Riemannian manifold, the tangent
bundle of the Minkowski unit sphere. We give detailed descriptions
and expositions on Holmes-Thompson volumes in Minkowski space by the
symplectic structure and the Crofton measures for them. For the Minkowski
plane, a normed two dimensional space, we express the area explicitly
in an integral geometry way, by putting a measure on the plane, which
gives an extension of Alvarez's result for higher dimensional cases.
\end{abstract}

\keywords{Minkowski space, Holmes-Thompson Volume, symplectic structure, convex
valuation}

\subjclass[2000]{28A75, 51A50, 32F17, 53B40}

\maketitle

\section{Introductions\label{sec:Introduction-on-Minkowski}}

\subsection{Minkowski Space and Geodesics}

A Minkowski space is a vector space with a Minkowski norm, and a Minkowski
norm is defined in \cite{CS} as
\begin{defn}
\label{def:A-smooth-function}A function $F:\mathbb{R}^{n}\rightarrow\mathcal{\mathbf{\mathbb{R}}}$
is a Minkowski norm if\end{defn}
\begin{enumerate}
\item $F(x)>0$ for any $x\in\mathbb{R}^{n}\setminus\left\{ 0\right\} $
and $F(0)=0$. 
\item $F(\lambda x)=|\lambda|F(x)$ for any $x\in\mathbb{R}^{n}\setminus\left\{ 0\right\} $.
\label{enu:posi}
\item $F\in C^{\infty}(\mathbb{R}^{n}\setminus\left\{ 0\right\} )$ and
the symmetric bilinear form \begin{equation}
g_{x}(u,v):=\frac{1}{2}\frac{\partial^{2}}{\partial s\partial t}F^{2}(x+su+tv)|_{s=t=0}\end{equation}
 is positively definite on $\mathbb{R}^{n}$ for any $x\in\mathbb{R}^{n}\setminus\left\{ 0\right\} $.\label{enu:hes}
\end{enumerate}
We denote a Minkowski space by $(\mathbb{R}^{n},F)$. By the way,
(\prettyref{enu:posi}) and (\prettyref{enu:hes}) in Definition \prettyref{def:A-smooth-function}
imply the the convexity of $F$, see Chapter 1 of \cite{CS}.

First of all, we can infer the following theorem about geodesics in
Minkowski space from Definition \prettyref{def:A-smooth-function}.
\begin{thm}
\label{thm:The-straight-lines}The straight line joining two points
in Minkowski space is the only shortest curve joining them.\end{thm}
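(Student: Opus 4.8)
The plan is to write the Minkowski length of a curve as the integral of $F$ along its velocity, to bound that integral below by $F$ of the displacement vector using the convexity of $F$ provided by \prettyref{def:A-smooth-function}, and then to extract uniqueness from the strict convexity of the unit ball $\{x : F(x)\le 1\}$, which is itself forced by the positive-definiteness requirement \prettyref{enu:hes}.

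Concretely, I would fix $p,q\in\mathbb{R}^{n}$ and let $\gamma\colon[0,1]\to\mathbb{R}^{n}$ be any piecewise $C^{1}$ curve with $\gamma(0)=p$ and $\gamma(1)=q$, so that its length is $L(\gamma)=\int_{0}^{1}F(\dot\gamma(t))\,dt$. Since \prettyref{enu:posi} and \prettyref{enu:hes} make $F$ convex (Chapter~1 of \cite{CS}), Jensen's inequality gives
\[
L(\gamma)=\int_{0}^{1}F\bigl(\dot\gamma(t)\bigr)\,dt\ \ge\ F\!\left(\int_{0}^{1}\dot\gamma(t)\,dt\right)=F(q-p),
\]
whereas the straight segment $\sigma(t)=p+t(q-p)$ has constant velocity $q-p$ and hence $L(\sigma)=F(q-p)$. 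This already shows that the segment is a shortest curve and that any competing shortest curve $\gamma$ must turn the displayed inequality into an equality.

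Analysing that equality case is the step I expect to be the main obstacle. The homogeneity \prettyref{enu:posi} makes $F$ linear along each ray, so $F$ is not strictly convex on $\mathbb{R}^{n}$; what is actually needed is that the indicatrix $\{F=1\}$ contains no line segment, equivalently that $F(u+v)=F(u)+F(v)$ is possible only when $u$ and $v$ are non-negative multiples of a single vector. This is precisely where \prettyref{enu:hes} enters: positive-definiteness of $g_{x}$ makes $F^{2}$ strictly convex, hence $\{F\le 1\}$ is strictly convex, which yields the required rigidity in the triangle inequality. Feeding this back into the estimate (for instance by applying the triangle inequality to a partition $0=t_{0}<\cdots<t_{N}=1$ and refining) forces every chord $\gamma(t_{i})-\gamma(t_{i-1})$, and in the limit $\dot\gamma(t)$ for a.e.\ $t$, to be a non-negative multiple of one fixed vector $w$; integrating shows $w$ is a positive multiple of $q-p$, so the image of $\gamma$ is exactly the segment $[p,q]$ and $\gamma$ is a weakly monotone reparametrization of $\sigma$. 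Since $L$ is invariant under such reparametrizations by homogeneity, this is the precise sense in which the straight line is the unique shortest curve joining $p$ and $q$.
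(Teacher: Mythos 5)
Your proof is correct in outline but takes a genuinely different route from the paper. The paper argues variationally: it assumes a unit-speed minimizer $r(t)$ exists, derives the Euler--Lagrange equation $\frac{\mathrm{d}}{\mathrm{d}t}\nabla F(r'(t))=0$, and combines it with $\nabla F(r'(t))\cdot r''(t)=0$ to get $\tfrac12 Hess(F^{2})\,r''=0$, so that the positive-definiteness in (\prettyref{enu:hes}) of Definition \prettyref{def:A-smooth-function} forces $r''=0$. You instead prove the inequality directly: Jensen (equivalently, sublinearity of $F$) gives $L(\gamma)\geq F(q-p)$ with the segment attaining it, and you extract uniqueness from strict convexity of the unit ball, which is the same hypothesis (\prettyref{enu:hes}) used in a different guise. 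Your approach buys something the paper's does not: it needs no existence or smoothness of a minimizer (the paper's Euler--Lagrange step tacitly assumes both), and it treats all competitors at once rather than only critical points; the paper's computation, on the other hand, is shorter and exhibits the geodesic equation explicitly. The one place you should tighten is the equality case: the partition-and-refine argument is vaguer than necessary, and a cleaner route is to take the supporting functional $\ell=dF(q-p)$, note $\ell(w)\leq F(w)$ for all $w$ with equality exactly on the ray $\mathbb{R}_{\geq0}(q-p)$ by strict convexity of the indicatrix, and observe that $\int_{0}^{1}F(\dot\gamma)\,dt=F(q-p)=\int_{0}^{1}\ell(\dot\gamma)\,dt$ forces $F(\dot\gamma(t))=\ell(\dot\gamma(t))$ a.e., hence $\dot\gamma(t)\in\mathbb{R}_{\geq0}(q-p)$ a.e.\ and the image of $\gamma$ is the segment, unique up to monotone reparametrization.
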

\begin{proof}
For any $p,q\in(\mathbb{R}^{n},F)$, let $r(t),\mbox{\,}t\in[a,b]$
with $F(r'(t))=1$, be a curve joining $p$ and $q$, which has the
minimum length. Then $r(t)$ is the minimizer of the functional $\int_{a}^{b}F(r'(t))dt$. 

Note that $F$ is smooth. By the fundamental lemma of calculus of
variation (Let $V(h):=\int_{a}^{b}F(r'(t)+h\delta'(t))dt$, where
$\delta(a)=\delta(b)=0$. Then\begin{equation}
\begin{array}{lll}
V'(0) & = & \frac{\partial}{\partial h}|_{h=0}\int_{a}^{b}F(r'(t)+h\delta'(t))dt\\
 & = & \int_{a}^{b}\frac{\partial}{\partial h}|_{h=0}F(r'(t)+h\delta'(t))dt\\
 & = & \int_{a}^{b}\nabla F(r'(t))\cdot\delta'(t)dt\\
 & = & \nabla F(r'(t))\cdot\delta(t)|_{a}^{b}-\int_{a}^{b}\delta(t)\cdot\frac{\mathrm{d}}{\mathrm{d}t}\nabla F(r'(t))dt\\
 & = & -\int_{a}^{b}\delta(t)\cdot\frac{\mathrm{d}}{\mathrm{d}t}\nabla F(r'(t))dt.\end{array}\label{eq:variation}\end{equation}
Thus we can obtain $\frac{\mathrm{d}}{\mathrm{d}t}\nabla F(r'(t))=0$
since $V'(0)=0$ as $V(0)\leqslant V(h)$ for any $\delta(t)$), or
by the Euler\textendash{}Lagrange equation directly, we have \begin{equation}
\frac{\mathrm{d}}{\mathrm{d}t}\nabla F(r'(t))=0.\label{eq:der}\end{equation}
Using chain rule, \prettyref{eq:der} becomes \begin{equation}
Hess(F)\frac{d^{2}r(t)}{dt^{2}}=0.\label{eq:hess}\end{equation}

On the other hand, we have \begin{equation}
\nabla F(r'(t))\frac{d^{2}r(t)}{dt^{2}}=0\label{eq:chain}\end{equation}
 by differentating $F(r'(t))=1$, and then by product rule, \prettyref{eq:hess}
and \prettyref{eq:chain}, \begin{equation}
\begin{array}{lll}
\frac{1}{2}Hess(F^{2})\frac{d^{2}r(t)}{dt^{2}} & = & F(r'(t))Hess(F)\frac{d^{2}r(t)}{dt^{2}}+(\nabla F(r'(t))^{T}\nabla F(r'(t))\frac{d^{2}r(t)}{dt^{2}}\\
 & = & Hess(F)\frac{d^{2}r(t)}{dt^{2}}\\
 & = & 0.\end{array}\label{eq:haseua}\end{equation}
Hence we get $\frac{d^{2}r(t)}{dt^{2}}=0$ because $\frac{1}{2}Hess(F^{2})$
is non-degenerated by (\prettyref{enu:hes}) in Definition \prettyref{def:A-smooth-function},
and then it implies $r(t)$, $t\in[a,b]$, is a straight line segment
connecting $p$ and $q$.
\end{proof}
Thus the space of geodesics in $(\mathbb{R}^{n},F)$ actually is the
space of affine lines, denoted by $\overline{Gr_{1}(\mathbb{R}^{n})}$.
More generally, one can define
\begin{defn}
\label{def:The-affine-Grassmannian}The affine Grassmannian $\overline{Gr_{k}(\mathbb{R}^{n})}$
is the space of affine $k$-planes in $(\mathbb{R}^{n},F)$.
\end{defn}

\subsection{Symplectic Structures on Cotangent Bundle\label{sub:Symplectic-Structures-on}}

The Minkowski space $(\mathbb{R}^{n},F)$, as a differentiable manifold,
has a canonical symplectic structure on its cotangent bundle $T^{*}\mathbb{R}^{n}$,
from which a symplectic structure on its tangent bundle $T\mathbb{R}^{n}$
can be derived as well. 

The canonical contact form $\alpha$ on $T^{*}\mathbb{R}^{n}$ is
defined as $\alpha_{\xi}(X):=\xi(\pi_{0*}X)$ for $X\in T_{\xi}T^{*}\mathbb{R}^{n}$,
where $\pi_{0}:T^{*}\mathbb{R}^{n}\to\mathbb{R}^{n}$ is the natural
projection. And then the canonical symplectic form on $T^{*}\mathbb{R}^{n}$
is defined as $\omega:=d\alpha$. 

On the other hand, we know that the dual of Minkowski metric is defined
as \begin{equation}
F^{*}(\xi):=sup\left\{ |\xi(v)|:v\in T\mathbb{R}^{n},F(v)\leqslant1\right\} \label{eq:dualnorm}\end{equation}
 for $\xi\in T^{*}\mathbb{R}^{n}$, and there is a natural correspondence
between the sphere bundle $S\mathbb{R}^{n}$and the cosphere bundle
$S\mathbb{R}^{n}=\left\{ \xi\in T^{*}\mathbb{R}^{n}:F^{*}(\xi)=1\right\} $
of the Minkowski space $(\mathbb{R}^{n},F)$. 

By the convexity and the positive homogeneity of $F$, see \cite{S1},
we can obtain $F^{*}(dF(\bar{\xi}))=1$ and $dF^{*}(dF(\bar{\xi}))=\bar{\xi}$
for any $\bar{\xi}\in S_{x}\mathbb{R}^{n}$ and $x\in\mathbb{R}^{n}$,
where $dF$ is the gradient of $F$ and similarly for $dF^{*}$. Thus
$dF$ is a diffeomorphism from $S_{x}\mathbb{R}^{n}$ to $S_{x}^{*}\mathbb{R}^{n}$,
which induces another diffeomorphism\begin{equation}
\begin{array}{c}
\varphi_{F}:S\mathbb{R}^{n}\rightarrow S^{*}\mathbb{R}^{n}\\
\varphi_{F}((x,\overline{\xi}_{x}))=(x,dF(\overline{\xi}_{x}))\end{array}\label{eq:diffeo}\end{equation}
for any $\overline{\xi}_{x}\in S_{x}\mathbb{R}^{n}$. More generally,
there is another diffeomorphism $\frac{1}{2}dF^{2}$ from $T_{x}\mathbb{R}^{n}\setminus\left\{ 0\right\} $
to $T_{x}^{*}\mathbb{R}^{n}\setminus\left\{ 0\right\} $ for any $x\in(\mathbb{R}^{n},F)$,
thus we obtain a diffeomorphism 

\begin{equation}
\begin{array}{c}
\bar{\varphi}_{F}:T\mathbb{R}^{n}\rightarrow T^{*}\mathbb{R}^{n}\\
\bar{\varphi}_{F}((x,\overline{\xi}_{x}))=(x,\frac{1}{2}dF^{2}(\overline{\xi}_{x}))\end{array}\label{eq:diffeomorege}\end{equation}
by ignoring the $0$-sections.

The diffeomorphism \prettyref{eq:diffeo} induces a $2$-form $\bar{\omega}:=\varphi_{F}^{*}\omega$
on $S\mathbb{R}^{n}$. Without loss of elegance, we can express it
more concretely. Since $T^{*}\mathbb{R}^{n}=\mathbb{R}^{n}\times\mathbb{R}^{n*}$,
for $(x,\xi)\in T^{*}\mathbb{R}^{n}$ the canonical symplectic form
$\omega$ on $T^{*}\mathbb{R}^{n}$ is actually $\omega=tr(dx\wedge d\xi)$,
here we denote $dx\wedge d\xi:=(dx_{i}\wedge d\xi_{j})_{n\times n}$
and similarly $dx\wedge d\bar{\xi}:=(dx_{i}\wedge d\bar{\xi}_{j})_{n\times n}$,
$n\times n$ matrices with $2$-forms as entries, where $\xi_{j}(\xi)=\xi(\frac{\partial}{\partial x_{j}})$,
$\bar{\xi}_{j}(\bar{\xi})=dx_{j}(\bar{\xi})$ and $F^{*}(\bar{\xi})=1$.
Then using chain rule, we can obtain

\begin{equation}
\bar{\omega}=\varphi_{F}^{*}(\omega|_{S^{*}\mathbb{R}^{n}})=Hess(F)\star dx\wedge d\bar{\xi}|_{S\mathbb{R}^{n}},\label{eq:matri}\end{equation}
where $\star$ is the Frobenius inner product which is the sum of
the entries of the entrywise product of two matrices.

\subsection{Gelfand Transform }

Gelfand transform on a double fibration as a generalization of Radon
transform plays an important role in making use of the symplectic
form of Section \prettyref{sub:Symplectic-Structures-on} in integral
geometry of Minkowski space.
\begin{defn}
Let $M\overset{\pi_{1}}{\leftarrow}\mathcal{F}\overset{\pi_{2}}{\rightarrow}\Gamma$
be double fibration where $M$ and $\Gamma$ are two manifolds, $\pi_{1}:\mathcal{F}\rightarrow M$
and $\pi_{2}:\mathcal{F}\rightarrow\Gamma$ are two fibre bundles,
and $\pi_{1}\times\pi_{2}:\mathcal{F}\rightarrow M\times\Gamma$ is
an submersion. Let $\Phi$ be a density on $\Gamma$, then the Gelfand
transform of $\Phi$ is defined as $GT(\Phi):=\pi_{1*}\pi_{2}^{*}\Phi$.
In the case $\Phi$ is a differential form and the fibres are oriented,
then we also have a well-defined Gelfand transform $GT(\Phi):=\pi_{1*}\pi_{2}^{*}\Phi$,
noting that the pushforward of a form is the integral of contracted
form over the fibre.

To make it clear, let's see how the degree of a density or form changes
by the transform. Suppose $\Phi$ is a density or form of degree $m$
on $\Gamma$ and the dimension of fibre $\pi_{1}$is $q$, then $\pi_{2}^{*}\Phi$
has degree $m$, and then $GT(\Phi)=\pi_{1*}\pi_{2}^{*}\Phi=\intop_{\pi_{1}^{-1}(x)}\pi_{2}^{*}\Phi$
for $x\in M$ has degree $m-q$.

An application of Gelfand transforms in integral geometry is the following
fundamental theorem \cite{AF1}, whose proof is quite simple.\end{defn}
\begin{thm}
\label{thm:fundamental}Suppose $M_{\gamma}:=\pi_{1}(\pi_{2}^{-1}(\gamma))$
are smooth submanifolds of $M$ for $\gamma\in\Gamma$, $\overline{M}\subset M$
is a immersed submanifold, and $\Phi$ is a top degree density on
$\Gamma$. Then \begin{equation}
\int_{\Gamma}\#(\overline{M}\cap M_{\gamma})\Phi(\gamma)=\int_{\overline{M}}GT(\Phi).\end{equation}
\end{thm}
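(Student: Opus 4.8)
The plan is to unwind the definitions on both sides and reduce the identity to a fibre-by-fibre computation, where it becomes a tautology. First I would set up the double fibration restricted to the relevant pieces: the incidence variety $\mathcal{F}$ with its two projections $\pi_1:\mathcal{F}\to M$ and $\pi_2:\mathcal{F}\to\Gamma$, and recall that $GT(\Phi)=\pi_{1*}\pi_2^*\Phi$ is a form of degree $\dim M - q$ on $M$ (where $q$ is the fibre dimension of $\pi_1$), obtained by contracting the pulled-back top-degree density $\pi_2^*\Phi$ along the oriented fibres of $\pi_1$ and integrating. The key observation is that for a fixed $\gamma\in\Gamma$ the submanifold $M_\gamma = \pi_1(\pi_2^{-1}(\gamma))$ is exactly the image under $\pi_1$ of a single fibre of $\pi_2$, and conversely the fibre $\pi_1^{-1}(x)$ records all $\gamma$ passing through $x$.

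The main step is a change of variables / Fubini argument on $\mathcal{F}$. Consider the preimage $\widetilde{M} := \pi_1^{-1}(\overline{M}) \subset \mathcal{F}$. On one hand, integrating $\pi_2^*\Phi$ over $\widetilde{M}$ and pushing forward along $\pi_1$ first gives, by the definition of the pushforward (integration over the fibres $\pi_1^{-1}(x)$ for $x\in\overline{M}$), precisely $\int_{\overline{M}} \pi_{1*}\pi_2^*\Phi = \int_{\overline{M}} GT(\Phi)$. On the other hand, pushing forward along $\pi_2$ first: for a generic $\gamma$, the fibre $\pi_2^{-1}(\gamma)\cap\widetilde{M}$ maps under $\pi_1$ onto $\overline{M}\cap M_\gamma$, and since the intersection is transverse (a consequence of the submersion hypothesis on $\pi_1\times\pi_2$, so $\overline{M}\cap M_\gamma$ is generically a finite set), this fibre consists of $\#(\overline{M}\cap M_\gamma)$ points counted with the orientation inherited from $\pi_1$; integrating the $0$-form obtained from $\pi_2^*\Phi$ over these points and then integrating the resulting top-degree density over $\Gamma$ yields $\int_\Gamma \#(\overline{M}\cap M_\gamma)\,\Phi(\gamma)$. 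Equating the two orders of integration — justified because $\pi_2^*\Phi$ is a fixed form on $\mathcal{F}$ and both iterated integrals compute $\int_{\widetilde{M}}\pi_2^*\Phi$ up to the matching of orientations — gives the theorem.

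The hard part will be the orientation bookkeeping and the transversality justification: one has to check that the sign conventions in the two fibre integrations are compatible so that $\#(\overline{M}\cap M_\gamma)$ really appears with a plus sign (i.e. as a cardinality, not an algebraic count), which is why $\Phi$ is assumed to be a density rather than merely a form, and that the genericity hypotheses guarantee $\dim(\overline{M}\cap M_\gamma)=0$ for almost every $\gamma$ so the count is finite and the boundary/measure-zero exceptional sets contribute nothing. I would handle this by working locally: choose adapted coordinates on $\mathcal{F}$ in which $\pi_1$ and $\pi_2$ are coordinate projections near a generic point of $\widetilde{M}$, write $\pi_2^*\Phi$ explicitly in these coordinates, and verify both iterated integrals against the single integral $\int_{\widetilde{M}}\pi_2^*\Phi$ by the classical Fubini theorem, then patch with a partition of unity. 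Everything else is routine.
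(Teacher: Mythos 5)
Your proposal follows essentially the same route as the paper's proof: both compute $\int_{\pi_1^{-1}(\overline{M})}\pi_2^*\Phi$ in two ways, once by fibre integration along $\pi_1$ (giving $\int_{\overline{M}}GT(\Phi)$) and once along $\pi_2$ (giving $\int_\Gamma\#(\overline{M}\cap M_\gamma)\Phi(\gamma)$). Your added attention to transversality, finiteness of the generic intersection, and the density-versus-form orientation bookkeeping only fills in details the paper leaves implicit.
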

\begin{proof}
Working on the transitions of measures on manifolds and the transformations
of intersection numbers, we have\begin{equation}
\begin{array}{lllll}
\int_{\overline{M}}GT(\Phi) & = & \int_{\overline{M}}\pi_{1*}\pi_{2}^{*}\Phi & = & \int_{\pi_{1}^{-1}(\overline{M})}\pi_{2}^{*}\Phi\\
 &  &  & = & \int_{\Gamma}\#(\pi_{2}^{-1}(\gamma)\cap\pi_{1}^{-1}(\overline{M}))\Phi(\gamma)\\
 &  &  & = & \int_{\Gamma}\#(\overline{M}\cap M_{\gamma})\Phi(\gamma).\end{array}\end{equation}

\end{proof}

\section{\label{sec:The-Symplectic-Structure}The Symplectic Structure on
the Space of Geodesics}

The symplectic structure on the space of geodesics in a Minkowski
space is induced naturally from the canonical symplectic structure
on its cotangent bundle. 

The process of construction of symplectic form on $\overline{Gr_{1}(\mathbb{R}^{n})}$
in Minkowski space $(\mathbb{R}^{n},F)$ is based on the following
diagram\begin{equation}
\begin{array}{ccccc}
S\mathbb{R}^{n} & \overset{\overset{\varphi_{F}}{\simeq}}{\rightarrow} & S^{*}\mathbb{R}^{n} & \overset{i}{\hookrightarrow} & T^{*}\mathbb{R}^{n}\\
\downarrow p\\
\overline{Gr_{1}(\mathbb{R}^{n})}\end{array}\label{eq:prodiag}\end{equation}
where $p$ is the projection from $S\mathbb{R}^{n}$ onto $\overline{Gr_{1}(\mathbb{R}^{n})}$
defined by 

\begin{equation}
p((x,\bar{\xi})):=l(x,\bar{\xi}),\label{eq:linepro}\end{equation}
where $l(x,\bar{\xi})$ is the line passing through $x$ with direction
$\bar{\xi}$. 

Consider the geodesic vector field $\overline{\mathcal{X}}(\overline{\xi}_{x}):=(\overline{\xi}_{x},0)$
on $T\mathbb{R}^{n}$ for any $\overline{\xi}_{x}\in S\mathbb{R}^{n}$,
$\varphi_{F}$ in \prettyref{eq:diffeo} induces another vector field
$\mathcal{X}:=d\varphi_{F}(\overline{\mathcal{X}})$ on $T^{*}\mathbb{R}^{n}$
with \begin{equation}
\mathcal{X}(dF(\overline{\xi}_{x}))=(d\varphi_{F}(\overline{\mathcal{X}})(\varphi_{F}(\overline{\xi}_{x}))=(\overline{\xi}_{x},0)\label{eq:dualvecor}\end{equation}
for $\overline{\xi}_{x}\in S\mathbb{R}^{n}$. 

We have the following vanishing property about $\mathcal{X}$ and
$\omega$ on $S^{*}\mathbb{R}^{n}$. 
\begin{lem}
\label{lem:vani}$i_{\mathcal{X}}\omega=0$ on $S^{*}\mathbb{R}^{n}$
. \end{lem}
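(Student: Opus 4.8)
The plan is to compute $i_{\mathcal{X}}\omega$ directly in the global coordinates $(x,\xi)$ on $T^{*}\mathbb{R}^{n}=\mathbb{R}^{n}\times\mathbb{R}^{n*}$, using the concrete expression $\omega=tr(dx\wedge d\xi)=\sum_{i}dx_{i}\wedge d\xi_{i}$ recalled in Section \prettyref{sub:Symplectic-Structures-on}, and then to restrict the resulting $1$-form to the level hypersurface $S^{*}\mathbb{R}^{n}=\{F^{*}=1\}$. The other ingredient is the duality relation $dF^{*}(dF(\bar{\xi}))=\bar{\xi}$, hence $F^{*}(dF(\bar{\xi}))=1$, from \cite{S1}, which simultaneously identifies $\mathcal{X}$ along $S^{*}\mathbb{R}^{n}$ with the geodesic (Hamiltonian) vector field of $\frac{1}{2}(F^{*})^{2}$.

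First I would read off $\mathcal{X}$ in coordinates. By \prettyref{eq:dualvecor}, at a point $\xi=dF(\bar{\xi}_{x})\in S_{x}^{*}\mathbb{R}^{n}$ the vector $\mathcal{X}_{\xi}$ equals $(\bar{\xi}_{x},0)\in T_{\xi}(T^{*}\mathbb{R}^{n})$, i.e.\ it is purely horizontal with base component $\bar{\xi}_{x}=dF^{*}(dF(\bar{\xi}_{x}))=dF^{*}(\xi)$. Thus $\mathcal{X}=\sum_{i}\frac{\partial F^{*}}{\partial\xi_{i}}\frac{\partial}{\partial x_{i}}$ along $S^{*}\mathbb{R}^{n}$, with no $\partial/\partial\xi_{i}$ component. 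Since $F^{*}$, as a function on $T^{*}\mathbb{R}^{n}$, depends only on the fibre variable $\xi$ (the Minkowski norm is the same on every tangent space of the flat space $\mathbb{R}^{n}$), contracting gives
\begin{equation}
i_{\mathcal{X}}\omega=\sum_{i}i_{\mathcal{X}}(dx_{i}\wedge d\xi_{i})=\sum_{i}\frac{\partial F^{*}}{\partial\xi_{i}}\,d\xi_{i}=dF^{*}.
\end{equation}
Restricting to $S^{*}\mathbb{R}^{n}$, which is exactly the set where $F^{*}\equiv1$, the pullback of $dF^{*}$ vanishes, so $i_{\mathcal{X}}\omega=0$ on $S^{*}\mathbb{R}^{n}$, as claimed.

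The computation itself is short, so the only care needed is bookkeeping: extracting $\mathcal{X}$ correctly from \prettyref{eq:dualvecor} (note $\mathcal{X}$ is a priori defined only over $S^{*}\mathbb{R}^{n}$, which is all the statement needs), invoking the homogeneity and duality identities of \cite{S1} consistently, and tracking which differentials survive the interior product. As a sanity check, the identity $i_{\mathcal{X}}\omega=dF^{*}$ also gives $dF^{*}(\mathcal{X})=\omega(\mathcal{X},\mathcal{X})=0$, so $\mathcal{X}$ is automatically tangent to $S^{*}\mathbb{R}^{n}$, and the lemma becomes the familiar statement that the Reeb, or geodesic, vector field of a cosphere bundle is symplectically orthogonal to that bundle, here made explicit for the Minkowski dual norm. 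An alternative, coordinate-free route would use Cartan's formula $i_{\mathcal{X}}\omega=i_{\mathcal{X}}d\alpha=\mathcal{L}_{\mathcal{X}}\alpha-d(i_{\mathcal{X}}\alpha)$ together with $i_{\mathcal{X}}\alpha=\alpha(\mathcal{X})=\xi(\bar{\xi}_{x})=dF(\bar{\xi}_{x})(\bar{\xi}_{x})=F(\bar{\xi}_{x})=1$, which holds by Euler's identity for the $1$-homogeneous function $F$; but the direct contraction above is cleaner, and that is the version I would present.
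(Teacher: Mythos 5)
Your proof is correct and is essentially the paper's own argument: both identify the base component of $\mathcal{X}$ along $S^{*}\mathbb{R}^{n}$ with $dF^{*}(\xi)$ via the duality relation and then observe that the resulting contraction, which is just $dF^{*}$, annihilates vectors tangent to the level set $\{F^{*}=1\}$. The only difference is presentational — you write $i_{\mathcal{X}}\omega=dF^{*}$ in coordinates and restrict, while the paper pairs $\omega(\mathcal{X},Y)$ with an arbitrary tangent vector $Y$ and differentiates $F^{*}(\xi_{x})=1$ — so no further comment is needed.
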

\begin{proof}
Noting that $\omega(X,Y)=\langle X_{1},Y_{2}\rangle-\langle Y_{1},X_{2}\rangle$
for any $X=(X_{1},X_{2})$ and $Y=(Y_{1},Y_{2})$ in $T_{\xi_{x}}S^{*}\mathbb{R}^{n}\subset T_{\xi_{x}}T^{*}\mathbb{R}^{n}$
because $T^{*}\mathbb{R}^{n}\cong\mathbb{R}^{n}\times\mathbb{R}^{n*}$,
where the inner product is the dual space action, by \prettyref{eq:dualvecor}
we have \begin{equation}
\omega_{\xi_{x}}(\mathcal{X},Y)=\langle\overline{\xi}_{x},Y_{2}\rangle=\langle dF^{*}(\xi_{x}),Y_{2}\rangle=0\end{equation}
because $Y_{2}\in T_{\xi_{x}}S^{*}\mathbb{R}^{n}$ is {}``normal''
to $dF^{*}(\xi_{x})$, precisely, that can be obtained by differentiating
$F^{*}(\xi_{x})=1$ and noting $Y_{2}\in T_{\xi_{x}}S^{*}\mathbb{R}^{n}$.
\end{proof}
Furthermore, the Lie derivative of $\omega$ along geodesic vector
field $\mathcal{X}$ is \begin{equation}
\mathcal{L}_{\mathcal{X}}\omega=di_{\mathcal{X}}\omega+i_{\mathcal{X}}d\omega=0\label{eq:lie}\end{equation}
by \prettyref{lem:vani}. Then \prettyref{eq:lie} implies $(\varphi_{F})^{*}\omega|_{S^{*}\mathbb{R}^{n}}$
is invariant under $\mathcal{\bar{X}}$. 

Based on the invariance of $\omega$ we can construct a symplectic
structure on $\overline{Gr_{1}(\mathbb{R}^{n})}$. However, in order
to do that, we need to give a manifold structure for $\overline{Gr_{1}(\mathbb{R}^{n})}$
first.

In fact, we can build a bijection $\psi$ between $\overline{Gr_{1}(\mathbb{R}^{n})}$
and $TS_{F}^{n-1}$,where $S_{F}^{n-1}$ is the unit sphere in $(\mathbb{R}^{n},F)$.
For any $l(x,\bar{\xi})\in\overline{Gr_{1}(\mathbb{R}^{n})}$, let
$\bar{\eta}$ be the tangent vector pointing at $l(x.\bar{\xi})\cap T_{\bar{\xi}}S_{F}^{n-1}$,
in fact, $\bar{\eta}=x-dF(\bar{\xi})(x)\bar{\xi}\in T_{\bar{\xi}}S_{F}^{n-1}$,
see \prettyref{fig:isom}, and one can define \begin{equation}
\psi(l(x,\bar{\xi})):=(\bar{\xi,}\bar{\eta})=(\bar{\xi},x-dF(\bar{\xi})(x)\bar{\xi}).\label{eq:bij}\end{equation}
Thus we have a homeomorphism $\psi$ from $\overline{Gr_{1}(\mathbb{R}^{n})}$
to $TS_{F}^{n-1}$, and then the manifold structure on $TS_{F}^{n-1}$
provides one for $\overline{Gr_{1}(\mathbb{R}^{n})}$.

\begin{figure}
\includegraphics[scale=0.6]{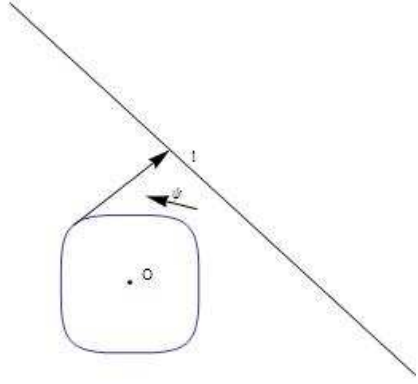}

\caption{\label{fig:isom} $\overline{Gr_{1}(\mathbb{R}^{n})}$ Diffeomorphic
to $TS_{F}^{n-1}$}

\end{figure}

Let us again consider the projection \prettyref{eq:linepro} with
the manifold structure on $\overline{Gr_{1}(\mathbb{R}^{n})}$, and
then we can obtain the following lemma
\begin{lem}
\label{lem:kernel} $\mathcal{\bar{X}}$ is in the kernel of $dp$,
in other words, $p_{*}(\mathcal{\bar{X}})=0$.\end{lem}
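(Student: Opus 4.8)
The plan is to observe that $p$ is constant along the integral curves of the geodesic vector field $\overline{\mathcal{X}}$, so that its differential automatically annihilates the velocity of those curves, which is $\overline{\mathcal{X}}$ itself. Since $\overline{\mathcal{X}}(\overline{\xi}_{x})=(\overline{\xi}_{x},0)$ on $T\mathbb{R}^{n}$, its flow on $S\mathbb{R}^{n}$ is $\phi_{t}(x,\overline{\xi})=(x+t\overline{\xi},\overline{\xi})$; by \prettyref{thm:The-straight-lines} these are precisely the unit-speed geodesics of $(\mathbb{R}^{n},F)$ lifted to the sphere bundle. Translating the base point of a line along its own direction does not change the affine line, so $p\circ\phi_{t}=p$, and differentiating at $t=0$ yields $dp(\overline{\mathcal{X}})=0$.

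To turn this into an honest computation I would pass through the diffeomorphism $\psi$ of \prettyref{eq:bij}, which is how the manifold structure on $\overline{Gr_{1}(\mathbb{R}^{n})}$ was defined. Composing, $(\psi\circ p)(x,\overline{\xi})=(\overline{\xi},\,x-dF(\overline{\xi})(x)\,\overline{\xi})$, hence
\begin{equation}
(\psi\circ p)(\phi_{t}(x,\overline{\xi}))=\bigl(\overline{\xi},\;(x+t\overline{\xi})-dF(\overline{\xi})(x+t\overline{\xi})\,\overline{\xi}\bigr).
\end{equation}
Since $dF(\overline{\xi})$ is linear, $dF(\overline{\xi})(x+t\overline{\xi})=dF(\overline{\xi})(x)+t\,dF(\overline{\xi})(\overline{\xi})$, and by Euler's identity for the $1$-homogeneous function $F$ one has $dF(\overline{\xi})(\overline{\xi})=F(\overline{\xi})=1$ on $S_{F}^{n-1}$. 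Substituting, the two $t\overline{\xi}$ contributions cancel and $(\psi\circ p)(\phi_{t}(x,\overline{\xi}))=(\overline{\xi},\,x-dF(\overline{\xi})(x)\,\overline{\xi})$ is independent of $t$. Differentiating at $t=0$ gives $d(\psi\circ p)(\overline{\mathcal{X}})=0$, and since $\psi$ is a diffeomorphism this is equivalent to $p_{*}(\overline{\mathcal{X}})=dp(\overline{\mathcal{X}})=0$.

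I do not expect a genuine obstacle here: the only step that is not purely formal is the cancellation above, i.e. the use of the homogeneity relation $dF(\overline{\xi})(\overline{\xi})=F(\overline{\xi})$, which is immediate from property (\ref{enu:posi}) of \prettyref{def:A-smooth-function}. One could equally well bypass the chart $\psi$ and argue directly from the definition of $l(x,\overline{\xi})$ as the line through $x$ with direction $\overline{\xi}$, but carrying out the computation in the coordinates supplied by $\psi$ has the advantage of being manifestly rigorous. In essence the lemma is just the statement that the geodesic flow descends trivially to the space of geodesics.
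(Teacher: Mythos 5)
Your proposal is correct and is essentially the paper's own argument: both compute the differential of $p$ in the coordinates supplied by $\psi$ from \prettyref{eq:bij} and cancel via the homogeneity identity $dF(\bar{\xi})(\bar{\xi})=F(\bar{\xi})=1$ (the paper's \prettyref{eq:one}). Your phrasing through the flow $\phi_{t}(x,\overline{\xi})=(x+t\overline{\xi},\overline{\xi})$ is just the integrated form of the paper's direct differentiation of $(\bar{\xi},x-dF(\bar{\xi})(x)\bar{\xi})$ in the direction $(\bar{\xi},0)$.
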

\begin{proof}
Using the basic equality \begin{equation}
dF(\bar{\xi})(\bar{\xi})=F(\bar{\xi})=1\label{eq:one}\end{equation}
obtained by the positive homogeneity of $F$ for any $\bar{\xi}\in S_{x}\mathbb{R}^{n}$,
we have \begin{equation}
\begin{array}{lllll}
p_{*}(\mathcal{\bar{X}}) & = & dp((\bar{\xi},0)) & = & d(\bar{\xi},x-dF(\bar{\xi})(x)\bar{\xi})((\bar{\xi},0))\\
 &  &  & = & \bar{\xi}-dF(\bar{\xi})(\bar{\xi})\bar{\xi}\\
 &  &  & = & (1-dF(\bar{\xi})(\bar{\xi}))\bar{\xi}\\
 &  &  & = & 0.\end{array}\end{equation}

\end{proof}
One can compute the rank of the Jacobian of $p$ which is $2n-2$,
that implies $dim(dp|_{\bar{\xi}_{x}})=1$ and then \begin{equation}
ker(dp|_{\bar{\xi}_{x}})=span(\mathcal{\bar{X}}(\bar{\xi}_{x}))\label{eq:span}\end{equation}
by \prettyref{lem:kernel}. 

Now we can obtain the following theorem 
\begin{thm}
\label{thm:There-exists-a}There exists a symplectic form $\omega_{0}$
on $\overline{Gr_{1}(\mathbb{R}^{n})}$, such that $p^{*}\omega_{0}=\bar{\omega}=(\varphi_{F})^{*}\omega|_{S^{*}\mathbb{R}^{n}}$.\end{thm}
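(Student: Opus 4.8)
The plan is to realize $\omega_0$ as the descent of $\bar\omega$ along the fibration $p\colon S\mathbb{R}^n\to\overline{Gr_1(\mathbb{R}^n)}$, and for this we must check exactly two things: that $\bar\omega$ is \emph{basic} with respect to $p$ (so that a form $\omega_0$ with $p^*\omega_0=\bar\omega$ exists at all), and that the resulting $\omega_0$ is nondegenerate and closed (so that it is genuinely symplectic). Recall that a form on the total space of a submersion descends to the base precisely when it is horizontal (its contraction with any vertical vector vanishes) and invariant (its Lie derivative along any vertical vector field vanishes); since by \prettyref{eq:span} the vertical bundle $\ker dp$ is the line spanned pointwise by the geodesic field $\bar{\mathcal X}$, it suffices to verify $i_{\bar{\mathcal X}}\bar\omega=0$ and $\mathcal L_{\bar{\mathcal X}}\bar\omega=0$.

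First I would transport everything through the diffeomorphism $\varphi_F\colon S\mathbb{R}^n\xrightarrow{\ \simeq\ }S^*\mathbb{R}^n$ of \prettyref{eq:diffeo}. Since $\bar\omega=\varphi_F^*\,\omega|_{S^*\mathbb{R}^n}$ and $\mathcal X=d\varphi_F(\bar{\mathcal X})$ by \prettyref{eq:dualvecor}, naturality of pullback gives $i_{\bar{\mathcal X}}\bar\omega=\varphi_F^*\bigl(i_{\mathcal X}\,\omega|_{S^*\mathbb{R}^n}\bigr)$ and $\mathcal L_{\bar{\mathcal X}}\bar\omega=\varphi_F^*\bigl(\mathcal L_{\mathcal X}\,\omega|_{S^*\mathbb{R}^n}\bigr)$. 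By \prettyref{lem:vani} the first expression vanishes, and by the Cartan-formula computation \prettyref{eq:lie} (which used \prettyref{lem:vani} together with $d\omega=0$) the second vanishes as well. Hence $\bar\omega$ is horizontal and invariant along the fibres of $p$, so there is a unique $2$-form $\omega_0$ on $\overline{Gr_1(\mathbb{R}^n)}$ with $p^*\omega_0=\bar\omega$; I would phrase this descent step carefully, using that $p$ is a submersion with connected one-dimensional fibres (the orbits of $\bar{\mathcal X}$) and the manifold structure on $\overline{Gr_1(\mathbb{R}^n)}$ furnished by the homeomorphism $\psi$ of \prettyref{eq:bij} onto $TS_F^{n-1}$.

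Next I would check that $\omega_0$ is symplectic. Closedness is immediate: $p^*(d\omega_0)=d(p^*\omega_0)=d\bar\omega=d\varphi_F^*\omega|_{S^*\mathbb{R}^n}=\varphi_F^*(d(\omega|_{S^*\mathbb{R}^n}))$, and since $\omega=d\alpha$ is exact its restriction need not be closed a priori --- but here we use that $\bar\omega$, being the pullback $p^*\omega_0$, is literally the pullback of a form on a $(2n-2)$-manifold, so on each fibre-transverse slice it already came from below; more cleanly, $d\bar\omega$ is itself horizontal and invariant, hence equals $p^*(d\omega_0)$, and since $p$ is a submersion $p^*$ is injective on forms, giving $d\omega_0=0$. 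For nondegeneracy: at a point $\bar\xi_x$, the kernel of $\bar\omega$ on $T_{\bar\xi_x}S\mathbb{R}^n$ is exactly $\ker dp_{\bar\xi_x}=\mathrm{span}(\bar{\mathcal X})$ --- the inclusion $\supseteq$ is $i_{\bar{\mathcal X}}\bar\omega=0$, and the reverse inclusion follows because $\omega|_{S^*\mathbb{R}^n}$ has a one-dimensional kernel (it is the restriction of the symplectic $\omega$ to the hypersurface $S^*\mathbb{R}^n$, whose characteristic foliation is one-dimensional, spanned by $\mathcal X$) transported back by the diffeomorphism $\varphi_F$. Since $dp_{\bar\xi_x}$ has rank $2n-2$ and $\bar\omega$ descends with kernel exactly the vertical line, the induced form $\omega_0$ on the $(2n-2)$-dimensional quotient is nondegenerate.

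The main obstacle I anticipate is the rigorous justification of the descent and of the nondegeneracy statement, both of which hinge on the claim that $\ker(\omega|_{S^*\mathbb{R}^n})$ is \emph{exactly} one-dimensional and spanned by $\mathcal X$ --- \prettyref{lem:vani} only gives the inclusion $\mathcal X\in\ker$. The clean way to get equality is the general fact that the restriction of a symplectic form to a smooth hypersurface has a one-dimensional kernel at each point (the characteristic direction), so I would either invoke that fact or verify it directly here using the explicit matrix form $\bar\omega=Hess(F)\star\,dx\wedge d\bar\xi$ from \prettyref{eq:matri} together with the rank count $\mathrm{rank}(dp)=2n-2$; the latter is exactly what pins down $\dim\ker(dp|_{\bar\xi_x})=1$ in \prettyref{eq:span} and lets the dimension bookkeeping close. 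Everything else is a formal consequence of \prettyref{lem:vani}, \prettyref{eq:lie}, \prettyref{lem:kernel}, and the submersion property of $p$.
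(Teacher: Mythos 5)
Your proposal is correct and takes essentially the same route as the paper: descend $\bar\omega$ along the submersion $p$ using \prettyref{lem:vani}, the invariance \prettyref{eq:lie}, and the identification $\ker dp=\mathrm{span}(\bar{\mathcal X})$ from \prettyref{eq:span}. Where you go beyond the paper is in actually verifying that the descended form is symplectic: the paper's proof only establishes well-definedness of $\omega_0$ with $p^{*}\omega_{0}=\bar{\omega}$, whereas you supply closedness (via injectivity of $p^{*}$ for the surjective submersion $p$) and nondegeneracy (via the standard fact that a hypersurface in a symplectic manifold is coisotropic, so $\omega|_{S^{*}\mathbb{R}^{n}}$ has exactly a one-dimensional kernel, necessarily spanned by $\mathcal X$ by \prettyref{lem:vani}), which is a genuine and worthwhile completion of the argument. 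One small correction: your worry that the restriction of $\omega$ to $S^{*}\mathbb{R}^{n}$ "need not be closed a priori" is unfounded, since pullback along the inclusion commutes with $d$, so $d(i^{*}\omega)=i^{*}d\omega=0$ immediately and the closedness step is shorter than you make it.
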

\begin{proof}
By \prettyref{eq:lie}, \prettyref{eq:span} and \prettyref{lem:vani},
we know that $\bar{\omega}_{\xi_{x}}(X,Y)$ is independent of the
choices of preimages under the pushforward induced by projection $p$.
Thus we have a well-defined two form $\omega_{0}$ on $\overline{Gr_{1}(\mathbb{R}^{n})}$,
\begin{equation}
\omega_{0_{p(\xi_{x})}}(\tilde{X},\tilde{Y}):=\bar{\omega}_{\xi_{x}}(X,Y),\end{equation}
where $(p_{*})_{\xi{}_{x}}(X)=\tilde{X}$ and $(p_{*})_{\xi{}_{x}}(Y)=\tilde{Y}$,
such that \begin{equation}
p^{*}\omega_{0}=\bar{\omega}=(\varphi_{F})^{*}i^{*}\omega.\label{eq:imbeddingindu}\end{equation}

\end{proof}
That finishes the construction of symplectic structure on the space
of geodesics in Minkowski space.

On the other hand, since $T^{*}S_{F}^{n-1}$ as a cotangent bundle
on Riemannian manifold $S_{F}^{n-1}$ has a canonical symplectic structure
denoted as $\tilde{\omega}$, and we have a canonical diffeomorphism\begin{equation}
\begin{array}{c}
\tilde{\varphi}_{F}:TS_{F}^{n-1}\rightarrow T^{*}S_{F}^{n-1}\\
\tilde{\varphi}_{F}(\bar{\eta}_{\bar{\xi}})=\langle\bar{\eta}_{\bar{\xi}},\cdot\rangle_{g_{F}},\end{array}\label{eq:phimap}\end{equation}
in which $g_{F}$ is the Riemannian metric on $S_{F}^{n-1}$, which
is actually the bilinear form \begin{equation}
\langle\bar{u},\bar{v}\rangle_{g_{F}}:=\frac{\partial^{2}}{\partial s\partial t}F(\bar{\xi}+s\bar{u}+t\bar{v})|_{s=t=0}\label{eq:inner}\end{equation}
for any $\bar{u},\bar{v}\in T_{\bar{\xi}}S_{F}^{n-1}$, see \cite{CS},
and then $\tilde{\varphi}_{F}^{*}\tilde{\omega}$ is the the symplectic
form induced on $TS_{F}^{n-1}\cong\overline{Gr_{1}(\mathbb{R}^{n})}$.
Also, we have another symplectic form $\omega_{0}$ on $\overline{Gr_{1}(\mathbb{R}^{n})}$
from \prettyref{thm:There-exists-a}. A natural question is whether
the two symplectic structures on $\overline{Gr_{1}(\mathbb{R}^{n})}$
are the same, the answer is yes, see the following theorem 
\begin{thm}
$\omega_{0}=\tilde{\varphi}_{F}^{*}\bar{\omega}$.
\end{thm}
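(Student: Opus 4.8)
The plan is to transport everything up to $S\mathbb{R}^{n}$, where both $2$-forms become exact, and then to exhibit a common primitive up to an explicit exact term. Since the Jacobian of $p$ has rank $2n-2=\dim\overline{Gr_{1}(\mathbb{R}^{n})}$ (as observed after \prettyref{lem:kernel}), $p$ is a surjective submersion, and $\psi$ of \prettyref{eq:bij} is a diffeomorphism, so $(\psi\circ p)^{*}$ is injective on differential forms. Because $p^{*}\omega_{0}=\bar{\omega}$ by \prettyref{thm:There-exists-a}, it therefore suffices to prove
\[
\bar{\omega}=(\psi\circ p)^{*}\tilde{\varphi}_{F}^{*}\tilde{\omega}
\]
as $2$-forms on $S\mathbb{R}^{n}$. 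Now $\tilde{\omega}=d\tilde{\alpha}$ for the tautological $1$-form $\tilde{\alpha}$ on $T^{*}S_{F}^{n-1}$, and $\bar{\omega}=d\beta_{1}$ with $\beta_{1}:=\varphi_{F}^{*}i^{*}\alpha$; hence, writing $\beta_{2}:=(\psi\circ p)^{*}\tilde{\varphi}_{F}^{*}\tilde{\alpha}$, the desired identity is equivalent to $\beta_{1}+\beta_{2}$ being exact on $S\mathbb{R}^{n}$ (this will force $d\beta_{1}=-d\beta_{2}$; the overall sign appearing at the very end is governed by the sign convention relating the canonical symplectic form to the tautological form, cf. \prettyref{eq:matri}).

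Next I would compute the two $1$-forms explicitly at $(x,\bar{\xi})\in S\mathbb{R}^{n}$ on a tangent vector $(X_{1},X_{2})$, where $X_{1}\in T_{x}\mathbb{R}^{n}$ and $X_{2}\in T_{\bar{\xi}}S_{F}^{n-1}$. From $\alpha_{\xi}(X)=\xi(\pi_{0*}X)$ together with the fact that $\varphi_{F}$ preserves the base point (its differential is $(X_{1},X_{2})\mapsto(X_{1},Hess(F)(\bar{\xi})X_{2})$), one gets $\beta_{1}(X_{1},X_{2})=dF(\bar{\xi})(X_{1})$. For $\beta_{2}$: by \prettyref{eq:bij}, $\psi(p((x,\bar{\xi})))=(\bar{\xi},\bar{\eta})$ with $\bar{\eta}=x-dF(\bar{\xi})(x)\bar{\xi}$; by \prettyref{eq:phimap}, $\tilde{\varphi}_{F}$ preserves the base point and evaluates a test vector through the metric \prettyref{eq:inner}; and since the natural projection of $T^{*}S_{F}^{n-1}$ composed with $\tilde{\varphi}_{F}$ is the bundle projection of $TS_{F}^{n-1}$, the $\bar{\xi}$-component of $(\psi\circ p)_{*}(X_{1},X_{2})$ is $X_{2}$. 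Hence $\beta_{2}(X_{1},X_{2})=\langle\bar{\eta},X_{2}\rangle_{g_{F}}=Hess(F)(\bar{\xi})(\bar{\eta},X_{2})$, using that $g_{F}$ at $\bar{\xi}$ is the restriction of $Hess(F)(\bar{\xi})$ to $T_{\bar{\xi}}S_{F}^{n-1}$. Finally, $\nabla F$ is homogeneous of degree $0$, so $Hess(F)(\bar{\xi})\bar{\xi}=0$ by Euler's relation, while $dF(\bar{\xi})(\bar{\xi})=1$ by \prettyref{eq:one}; substituting $\bar{\eta}=x-dF(\bar{\xi})(x)\bar{\xi}$ gives $\beta_{2}(X_{1},X_{2})=Hess(F)(\bar{\xi})(x,X_{2})$.

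Then I would set $h:S\mathbb{R}^{n}\to\mathbb{R}$, $h(x,\bar{\xi}):=dF(\bar{\xi})(x)=\sum_{j}\partial_{j}F(\bar{\xi})\,x_{j}$, and compute $dh$ in the coordinates $(x,\bar{\xi})$: differentiating the $x_{j}$ reproduces exactly $\beta_{1}$ and differentiating the $\partial_{j}F(\bar{\xi})$ reproduces exactly $\beta_{2}$, so $dh=\beta_{1}+\beta_{2}$. Consequently $d\beta_{1}=-d\beta_{2}$, i.e. $\bar{\omega}=-(\psi\circ p)^{*}\tilde{\varphi}_{F}^{*}\tilde{\omega}$; with the compatible sign choice for the canonical symplectic form on the cotangent bundle this reads $\bar{\omega}=(\psi\circ p)^{*}\tilde{\varphi}_{F}^{*}\tilde{\omega}$, and injectivity of $(\psi\circ p)^{*}$ together with $p^{*}\omega_{0}=\bar{\omega}$ yields $\omega_{0}=\tilde{\varphi}_{F}^{*}\tilde{\omega}$ under the identification $\overline{Gr_{1}(\mathbb{R}^{n})}\cong TS_{F}^{n-1}$ of \prettyref{eq:bij}.

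I expect the main obstacle to be the bookkeeping: carefully tracking the chain of identifications $\varphi_{F},i,\psi,\tilde{\varphi}_{F}$ and the various base projections, and invoking the right homogeneity/Euler relations to see that $\bar{\eta}$ and $X_{2}$ genuinely lie in $T_{\bar{\xi}}S_{F}^{n-1}$ and that the $\bar{\xi}$-direction of $\bar{\eta}$ drops out of $\beta_{2}$. Once $\beta_{1}$ and $\beta_{2}$ are in hand, the relation $dh=\beta_{1}+\beta_{2}$ is a one-line check. A more computational alternative, avoiding primitives entirely, is to expand $(\psi\circ p)^{*}\tilde{\varphi}_{F}^{*}\tilde{\omega}$ in coordinates and match it term by term with the explicit formula $\bar{\omega}=Hess(F)\star dx\wedge d\bar{\xi}|_{S\mathbb{R}^{n}}$ of \prettyref{eq:matri}.
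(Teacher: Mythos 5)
Your argument is correct in substance, and it takes a genuinely different route from the paper. The paper proves the statement by a direct coordinate computation: it writes $\tilde{\varphi}_{F}^{*}\tilde{\omega}=Hess(F)\star d\bar{\eta}\wedge d\bar{\xi}$ as in \prettyref{eq:mapbetwe}, substitutes $\bar{\eta}=x-dF(\bar{\xi})(x)\bar{\xi}$ from \prettyref{eq:bij}, and kills the extra terms using \prettyref{eq:secdir} (the differentiated Euler relation) and the symmetry of the Hessian, then invokes \prettyref{thm:There-exists-a} exactly as you do via injectivity of $p^{*}$. You instead work one level down, with the tautological $1$-forms, and exhibit the explicit potential $h(x,\bar{\xi})=dF(\bar{\xi})(x)$ with $dh=\beta_{1}+\beta_{2}$; your computations of $\beta_{1}(X_{1},X_{2})=dF(\bar{\xi})(X_{1})$ and $\beta_{2}(X_{1},X_{2})=Hess(F)(\bar{\xi})(x,X_{2})$ are right, and the homogeneity input you use, $Hess(F)(\bar{\xi})\bar{\xi}=0$, is the same fact as \prettyref{eq:secdir}. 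What your route buys is a coordinate-free statement that is strictly stronger: the two pullbacks of the tautological forms to $S\mathbb{R}^{n}$ differ by the exact form $dh$, which explains the theorem rather than merely verifying it; what the paper's route buys is a short mechanical check (your closing "computational alternative" is precisely the paper's proof). The one soft spot is your handling of the sign, which you defer to "convention" twice. You should make it precise, and you can: the apparent discrepancy comes from the paper's own inconsistency between $\omega:=d\alpha$ and $\omega=tr(dx\wedge d\xi)$ in Section \prettyref{sub:Symplectic-Structures-on}. With the normalization actually used in \prettyref{eq:matri}, one has $\bar{\omega}=Hess(F)\star dx\wedge d\bar{\xi}=-d\beta_{1}$, while $\tilde{\omega}=d\tilde{\alpha}$ gives $(\psi\circ p)^{*}\tilde{\varphi}_{F}^{*}\tilde{\omega}=d\beta_{2}$; then $dh=\beta_{1}+\beta_{2}$ yields $\bar{\omega}=d\beta_{2}$ on the nose, with no residual sign to argue away. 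Stating it this way (and noting that the $\bar{\omega}$ in the theorem's statement should be read as $\tilde{\omega}$) would make your proof complete as written.
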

Let us first draw a diagram for this theorem by combining \prettyref{eq:prodiag}\begin{equation}
\begin{array}{ccccc}
S\mathbb{R}^{n} & \overset{\overset{\varphi_{F}}{\simeq}}{\rightarrow} & S^{*}\mathbb{R}^{n} & \overset{i}{\hookrightarrow} & T^{*}\mathbb{R}^{n}\\
\downarrow p\\
\overline{Gr_{1}(\mathbb{R}^{n})}\overset{\psi}{\simeq}TS_{F}^{n-1} & \overset{\overset{\tilde{\varphi}_{F}}{\simeq}}{\rightarrow} & T^{*}S_{F}^{n-1}.\end{array}\end{equation}

\begin{proof}
First, differentiating \prettyref{eq:one} and using chain rule, one
can get \begin{equation}
Hess(F)\star\bar{\xi}d\bar{\xi}|_{S\mathbb{R}^{n}}=0\label{eq:secdir}\end{equation}
in which $\bar{\xi}d\bar{\xi}:=(\bar{\xi}_{i}d\bar{\xi}_{j})_{n\times n}$
is a matrix and $\star$ is the Frobenius inner product of matrices.

Next, the canonical symplectic form $\tilde{\omega}$ on $T^{*}S_{F}^{n-1}$,
$\tilde{\omega}=\omega|_{T^{*}S_{F}^{n-1}}$ in which $\omega$ is
the canonical symplectic form on the cotangent bundle $T^{*}\mathbb{R}^{n}$.
Thus, from \prettyref{eq:phimap} and \prettyref{eq:inner}, one can
obtain that \begin{equation}
\tilde{\varphi}_{F}^{*}\tilde{\omega}=Hess(F)\star d\bar{\eta}\wedge d\bar{\xi}|_{TS_{F}^{n-1}},\label{eq:mapbetwe}\end{equation}
here $d\bar{\xi}\wedge d\bar{\eta}$ is a matrix with $2$-form entries
and $\star$ is the Frobenius inner product of matrices. 

Therefore, by plugging \prettyref{eq:bij} into \prettyref{eq:mapbetwe}
and using \prettyref{eq:secdir}, we obtain \begin{equation}
\begin{array}{lll}
p^{*}\tilde{\varphi}_{F}^{*}\tilde{\omega} & = & Hess(F)\star d\bar{\eta}\wedge d\bar{\xi}|_{TS_{F}^{n-1}}\\
 & = & Hess(F)\star d(x-dF(\bar{\xi})(x)\bar{\xi})\wedge d\bar{\xi}|_{S\mathbb{R}^{n}}\\
 & = & Hess(F)\star dx\wedge d\bar{\xi}|_{S\mathbb{R}^{n}}-d(dF(\bar{\xi})(x))\wedge Hess(F)\star\bar{\xi}d\bar{\xi}|_{S\mathbb{R}^{n}}\\
 & = & Hess(F)\star dx\wedge d\bar{\xi}|_{S\mathbb{R}^{n}}\\
 & = & \bar{\omega},\end{array}\end{equation}
which by \prettyref{thm:There-exists-a} implies the claim.
\end{proof}
At the end to this section, we make a remark on the symplectic structure
on $\overline{Gr_{1}(\mathbb{R}^{n})}$.
\begin{rem}
From \prettyref{eq:matri} we see the symplectic structure $\bar{\omega}$
on $T\mathbb{R}^{n}$ relies on the Minkowski metric $F$, then we
know, by the above construction, the symplectic structure on $\overline{Gr_{1}(\mathbb{R}^{n})}$
depends on the Minkowski metric $F$ as well. Let us see the following
example of Minkowski plane with $p$-norm as a Minkowski metric.\end{rem}
\begin{example}
Given a Minkowski plane by $\left(\mathbb{R}^{2},||\cdot||_{p}\right)$,
$1<p<\infty$, where $||(\alpha,\beta)||_{p}=(|\alpha|^{p}+|\beta|^{p})^{1/p}$
and the dual norm is $||\cdot||_{\frac{p}{p-1}}$ we can obtain the
symplectic form $\omega$ on $\overline{Gr_{1}(\mathbf{\mathbb{R}^{2})}}$,
the space of affine lines in $\left(\mathbb{R}^{2},||\cdot||_{p}\right)$,
by following the general construction above. 

By \prettyref{eq:matri} and \prettyref{thm:There-exists-a}, we have
\begin{equation}
p^{*}\omega_{0}=(p-1)\alpha^{p-2}dx\wedge d\alpha+(p-1)\beta^{p-2}dy\wedge d\beta,\label{eq:symponsn}\end{equation}
for $((x,y),(\alpha,\beta))\in S\mathbb{R}^{2}$.

Since $\overline{Gr_{1}(\mathbf{\mathbb{R}^{2})}}$ is a $2$-dimensional
manifold, we can parametrize affine lines in $\overline{Gr_{1}(\mathbf{\mathbb{R}^{2})}}$
with two variables in a natural way. For any straight line $l$ passing
through $(x,y)$ with direction $(\alpha,\beta)$ of unit $p$-norm,
let $(-\Theta,\Omega)$ be the unit vector in $p$-norm such that
$l$ is tangent to the Minkowski sphere $S(r)$ of radius $r$ at
$(-r\Theta,r\Omega)$, here we can call $r$ the {}``$p$-norm distance''
of $l$ to the origin, see \prettyref{fig:pdist}. Thus we can denote
the line by $l(r,\Theta)$.

\begin{figure}
\includegraphics[scale=0.5]{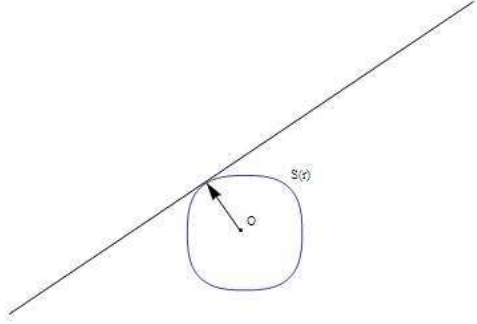}

\caption{\label{fig:pdist}{}``$p$-norm distance'' $r$}

\end{figure}

We have the following theorem about the symplectic structure on $\overline{Gr_{1}(\mathbf{\mathbb{R}^{2})}}$
by the above parametrization.\end{example}
\begin{thm}
The symplectic structure on $\overline{Gr_{1}(\mathbf{\mathbb{R}^{2})}}$
is \begin{equation}
\omega_{0}=\frac{(p-1)^{2}\Theta^{p(p-2)}\Omega^{p^{2}-3p+1}}{||(\Theta,\Omega)||_{p(p-1)}^{(p-1)(2p-1)}}dr\wedge d\Theta.\label{eq:psymp}\end{equation}
\end{thm}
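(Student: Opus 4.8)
\emph{Strategy.} Since $p$ is a submersion, $\omega_{0}$ is determined by $p^{*}\omega_{0}$, which \prettyref{eq:symponsn} records explicitly as a $2$-form on $S\mathbb{R}^{2}$. So the plan is to exhibit a smooth section $\sigma$ of $p$ adapted to the parametrization $l(r,\Theta)$ and simply compute $\omega_{0}=\sigma^{*}p^{*}\omega_{0}$. Throughout I work in the open quadrant $\Theta,\Omega>0$, where $\Omega=(1-\Theta^{p})^{1/p}$ and all sign functions disappear; the resulting formula extends to the rest of $\overline{Gr_{1}(\mathbb{R}^{2})}$ by the symmetries of $||\cdot||_{p}$.

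\emph{The section.} The line $l(r,\Theta)$ is tangent at $(-r\Theta,r\Omega)$ to the sphere $S(r)=\{v\in\mathbb{R}^{2}:||v||_{p}=r\}$, whose Euclidean normal there is the direction $(-\Theta^{p-1},\Omega^{p-1})$ (the gradient of $v\mapsto||v||_{p}^{p}$); hence $l(r,\Theta)$ points in the direction $(\Omega^{p-1},\Theta^{p-1})$, which after normalization in $p$-norm is $(\alpha,\beta)=(\Omega^{p-1},\Theta^{p-1})\,||(\Theta,\Omega)||_{p(p-1)}^{-(p-1)}$. Taking $(-r\Theta,r\Omega)$ for the marked point of the line, set $\sigma(r,\Theta):=\bigl((-r\Theta,r\Omega),(\alpha,\beta)\bigr)\in S\mathbb{R}^{2}$; by the definition \prettyref{eq:linepro} of $p$ one has $p\circ\sigma=\mathrm{id}$.

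\emph{Reduction.} Because $\alpha,\beta,\Omega$ are functions of $\Theta$ alone while $x=-r\Theta$ and $y=r\Omega$ are affine in $r$, we get $\sigma^{*}(dx\wedge d\alpha)=-\Theta\,\alpha'\,dr\wedge d\Theta$ and $\sigma^{*}(dy\wedge d\beta)=\Omega\,\beta'\,dr\wedge d\Theta$, where a prime denotes $d/d\Theta$. Feeding these into \prettyref{eq:symponsn} and using $(p-1)\alpha^{p-2}\alpha'=\tfrac{d}{d\Theta}\alpha^{p-1}$ reduces the claim to
\[
\omega_{0}=\Bigl(\Omega\,\tfrac{d}{d\Theta}\beta^{p-1}-\Theta\,\tfrac{d}{d\Theta}\alpha^{p-1}\Bigr)\,dr\wedge d\Theta,
\]
where $\alpha^{p-1}=\Omega^{(p-1)^{2}}N^{-(p-1)^{2}}$, $\beta^{p-1}=\Theta^{(p-1)^{2}}N^{-(p-1)^{2}}$ with $N:=||(\Theta,\Omega)||_{p(p-1)}$, and $d\Omega/d\Theta=-\Theta^{p-1}/\Omega^{p-1}$ by differentiating $\Theta^{p}+\Omega^{p}=1$.

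\emph{Where the work is.} What remains is a direct but exponent-heavy simplification, and it is the only real difficulty. Write $q:=p(p-1)$, so $N^{q}=\Theta^{q}+\Omega^{q}$; the bookkeeping hinges on the arithmetic identities $q/p=p-1$ and $(p-1)^{2}-1=p(p-2)=q-p$, which are precisely what makes the monomials produced by differentiation cancel in pairs. Carrying out the two differentiations and using $\Theta^{p}+\Omega^{p}=1$ repeatedly (directly and through $d\Omega/d\Theta=-\Theta^{p-1}/\Omega^{p-1}$), each of $\tfrac{d}{d\Theta}\beta^{p-1}$ and $\tfrac{d}{d\Theta}\alpha^{p-1}$ collapses to a single monomial in $(\Theta,\Omega)$ times $N^{-(p-1)(2p-1)}$; substituting into the display above and applying $\Theta^{p}+\Omega^{p}=1$ once more---this time the two surviving terms add, not cancel---gives \prettyref{eq:psymp}, the constant emerging as $(p-1)^{2}$. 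Because the calculation is easy to slip on, I would sanity-check the answer at $p=2$, where $N\equiv 1$ and the formula must reduce to $\Omega^{-1}\,dr\wedge d\Theta$---the classical symplectic form $dr\wedge d\varphi$ on the space of affine lines in the Euclidean plane, under $(-\Theta,\Omega)=(\cos\varphi,\sin\varphi)$---and at $p=3$, where it should come out as $4\Theta^{3}\Omega\,(\Theta^{6}+\Omega^{6})^{-5/3}\,dr\wedge d\Theta$.
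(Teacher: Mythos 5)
Your proposal is correct, and it reaches \prettyref{eq:psymp} by a genuinely different mechanism than the paper. The paper stays on the sphere bundle: it writes $r$ and $\Theta$ as functions of $(x,y,\alpha,\beta)$ via \prettyref{eq:dist} and \prettyref{eq:vect}, expands $dr\wedge d\Theta$ there in the lengthy computation \prettyref{eq:firstcp}, recognizes the result as a multiple of $p^{*}\omega_{0}$ from \prettyref{eq:symponsn}, and then converts the proportionality factor back to $(\Theta,\Omega)$ using $\beta/\alpha=\Theta^{p-1}/\Omega^{p-1}$. You invert the roles: you construct an explicit section $\sigma(r,\Theta)=\bigl((-r\Theta,r\Omega),(\alpha,\beta)\bigr)$ of the projection \prettyref{eq:linepro}, with $(\alpha,\beta)=(\Omega^{p-1},\Theta^{p-1})\,\|(\Theta,\Omega)\|_{p(p-1)}^{-(p-1)}$, and compute $\omega_{0}=\sigma^{*}p^{*}\omega_{0}$ directly in the $(r,\Theta)$ chart. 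What your route buys is a cleaner logical frame (a pullback along a section needs no argument about proportionality of two pulled-back $2$-forms) and a computation organized as two one-variable derivatives; what the paper's route buys is that it never has to choose a marked point and direction for $l(r,\Theta)$, at the cost of the heavier manipulation of $\alpha/\beta$ ratios. Your outline of the remaining simplification does close up as claimed: with $m=(p-1)^{2}$, $q=p(p-1)$, $N=\|(\Theta,\Omega)\|_{p(p-1)}$, one gets $\tfrac{d}{d\Theta}\beta^{p-1}=m\,\Theta^{p(p-2)}\Omega^{p(p-2)}N^{-(p-1)(2p-1)}$ and $\tfrac{d}{d\Theta}\alpha^{p-1}=-m\,\Theta^{q-1}\Omega^{p^{2}-3p+1}N^{-(p-1)(2p-1)}$, so that $\Omega\,\tfrac{d}{d\Theta}\beta^{p-1}-\Theta\,\tfrac{d}{d\Theta}\alpha^{p-1}=m\,\Theta^{p(p-2)}\Omega^{p^{2}-3p+1}(\Theta^{p}+\Omega^{p})N^{-(p-1)(2p-1)}$, which is exactly \prettyref{eq:psymp}; your $p=2$ and $p=3$ checks are also right. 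The only improvement I would ask for is to write out this last computation explicitly rather than describing it, since it is the substantive step of the proof.
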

\begin{proof}
For a line $l$ passing through $(x,y)$ with direction $(\alpha,\beta)$
of unit $p$-norm, the {}``$p$-norm distance''\begin{equation}
r=-\Theta^{p-1}x+\Omega^{p-1}y\label{eq:dist}\end{equation}
 and\begin{equation}
(-\Theta,\Omega)=(-\frac{\beta^{\frac{1}{p-1}}}{(\alpha^{\frac{p}{p-1}}+\beta^{\frac{p}{p-1}})^{\frac{1}{p}}},\frac{\alpha^{\frac{1}{p-1}}}{(\alpha^{\frac{p}{p-1}}+\beta^{\frac{p}{p-1}})^{\frac{1}{p}}}).\label{eq:vect}\end{equation}

In order to express $\omega_{0}$ in terms of $r$ and $\Theta$,
at first we use \prettyref{eq:dist} and \prettyref{eq:vect} to compute\begin{equation}
\begin{array}{lll}
dr\wedge d\Theta & = & (-\Theta^{p-1}dx+\Omega^{p-1}dy)\wedge d\Theta\\
 & = & -\Theta^{p-1}dx\wedge d(\frac{\beta^{\frac{1}{p-1}}}{(\alpha^{\frac{p}{p-1}}+\beta^{\frac{p}{p-1}})^{\frac{1}{p}}})+\Omega^{p-1}dy\wedge d(\frac{\beta^{\frac{1}{p-1}}}{(\alpha^{\frac{p}{p-1}}+\beta^{\frac{p}{p-1}})^{\frac{1}{p}}})\\
 & = & -\Theta^{p-1}dx\wedge d(\frac{1}{((\frac{\alpha}{\beta})^{\frac{p}{p-1}}+1)^{\frac{1}{p}}})+\Omega^{p-1}dy\wedge d(\frac{1}{((\frac{\alpha}{\beta})^{\frac{p}{p-1}}+1)^{\frac{1}{p}}})\\
 & = & -\Theta^{p-1}dx\wedge(-\frac{1}{p})((\frac{\alpha}{\beta})^{\frac{p}{p-1}}+1)^{-\frac{p+1}{p}}\frac{p}{p-1}(\frac{\alpha}{\beta})^{\frac{1}{p-1}}\frac{\beta d\alpha-\alpha d\beta}{\beta^{2}}\\
 &  & \,+\Omega^{p-1}dy\wedge(-\frac{1}{p})((\frac{\alpha}{\beta})^{\frac{p}{p-1}}+1)^{-\frac{p+1}{p}}\frac{p}{p-1}(\frac{\alpha}{\beta})^{\frac{1}{p-1}}\frac{\beta d\alpha-\alpha d\beta}{\beta^{2}}\\
 & = & (\frac{1}{p-1})(\frac{\alpha}{\beta})^{\frac{1}{p-1}}((\frac{\alpha}{\beta})^{\frac{p}{p-1}}+1)^{-\frac{p+1}{p}}(\Theta^{p-1}dx\wedge(\frac{1}{\beta}d\alpha-\frac{\alpha}{\beta^{2}}d\beta)\\
 &  & \qquad-\Omega^{p-1}dy\wedge(\frac{1}{\beta}d\alpha-\frac{\alpha}{\beta^{2}}d\beta)\\
 & = & (\frac{1}{p-1})(\frac{\alpha}{\beta})^{\frac{1}{p-1}}((\frac{\alpha}{\beta})^{\frac{p}{p-1}}+1)^{-\frac{p+1}{p}}(\Theta^{p-1}(\frac{1}{\beta}+\frac{\alpha^{p}}{\beta^{p+1}})dx\wedge d\alpha\\
 &  & \qquad-\Omega^{p-1}(-\frac{1}{\beta}\frac{\beta^{p-1}}{\alpha^{p-1}}-\frac{\alpha}{\beta^{2}})dy\wedge d\beta)\\
 & = & (\frac{1}{p-1})(\frac{\alpha}{\beta})^{\frac{1}{p-1}}((\frac{\alpha}{\beta})^{\frac{p}{p-1}}+1)^{-\frac{p+1}{p}}(\frac{\Theta^{p-1}}{\beta^{p+1}}dx\wedge d\alpha+\frac{\Omega^{p-1}}{\alpha^{p-1}\beta^{2}}dy\wedge d\beta)\\
 & = & \frac{1}{(p-1)^{2}}(\frac{\alpha}{\beta})^{\frac{1}{p-1}}((\frac{\alpha}{\beta})^{\frac{p}{p-1}}+1)^{-\frac{p+1}{p}}(\frac{\Theta^{p-1}}{\beta^{p+1}\alpha^{p-2}}(p-1)\alpha^{p-2}dx\wedge d\alpha\\
 &  & \qquad+\frac{\Omega^{p-1}}{\alpha^{p-1}\beta^{p}}(p-1)\beta^{p-2}dy\wedge d\beta).\end{array}\label{eq:firstcp}\end{equation}
Indeed,\begin{equation}
\frac{\Theta^{p-1}}{\beta^{p+1}\alpha^{p-2}}=\frac{\Omega^{p-1}}{\alpha^{p-1}\beta^{p}}\label{eq:propomeley}\end{equation}
since \begin{equation}
\frac{\Theta^{p-1}}{\Omega^{p-1}}=\frac{\beta}{\alpha}\label{eq:propotion}\end{equation}
 by \prettyref{eq:vect}. 

Therefore, using \prettyref{eq:propomeley}, \prettyref{eq:propotion}
and $||(\Theta,\Omega)||_{p}=1$, we have\begin{equation}
\begin{array}{lll}
dr\wedge d\Theta & = & \frac{1}{(p-1)^{2}}(\frac{\alpha}{\beta})^{\frac{1}{p-1}}((\frac{\alpha}{\beta})^{\frac{p}{p-1}}+1)^{-\frac{p+1}{p}}\frac{\Theta^{p-1}}{\beta^{p+1}\alpha^{p-2}}\\
 &  & \qquad((p-1)\alpha^{p-2}dx\wedge d\alpha+(p-1)\beta^{p-2}dy\wedge d\beta)\\
 & = & \frac{1}{(p-1)^{2}}(\frac{\alpha}{\beta})^{\frac{1}{p-1}}((\frac{\alpha}{\beta})^{\frac{p}{p-1}}+1)^{-\frac{p+1}{p}}\frac{\Theta^{p-1}}{\beta^{p+1}\alpha^{p-2}}\omega_{0}\\
 & = & \frac{1}{(p-1)^{2}}\frac{(\frac{\alpha}{\beta})^{\frac{1}{p-1}}\Theta^{p-1}}{((\frac{\alpha}{\beta})^{\frac{p}{p-1}}+1)^{\frac{p+1}{p}}\beta^{p+1}\alpha^{p-2}}\omega_{0}\\
 & = & \frac{1}{(p-1)^{2}}\frac{\frac{\Omega}{\Theta}\Theta^{p-1}}{((\frac{\Omega}{\Theta})^{p}+1)^{\frac{p+1}{p}}(\frac{\Theta^{p-1}}{||(\Theta^{p-1},\Omega^{p-1})||_{p}})^{p+1}(\frac{\Omega^{p-1}}{||(\Theta^{p-1},\Omega^{p-1})||_{p}})^{p-2}}\omega_{0}\\
 & = & \frac{1}{(p-1)^{2}}\frac{\Omega\Theta^{2p-1}}{(\frac{\Theta^{p-1}}{||(\Theta^{p-1},\Omega^{p-1})||_{p}})^{p+1}(\frac{\Omega^{p-1}}{||(\Theta^{p-1},\Omega^{p-1})||_{p}})^{p-2}}\omega_{0}\\
 & = & \frac{1}{(p-1)^{2}}\frac{\Omega\Theta^{2p-1}||(\Theta^{p-1},\Omega^{p-1})||_{p}^{2p-1}}{\Theta^{(p-1)(p+1)}\Omega^{(p-1)(p-2)}}\omega_{0}\\
 & = & \frac{1}{(p-1)^{2}}\frac{||(\Theta^{p-1},\Omega^{p-1})||_{p}^{2p-1}}{\Theta^{p(p-2)}\Omega^{p^{2}-3p+1}}\omega_{0}\\
 & = & \frac{||(\Theta,\Omega)||_{p(p-1)}^{(p-1)(2p-1)}}{(p-1)^{2}\Theta^{p(p-2)}\Omega^{p^{2}-3p+1}}\omega_{0},\end{array}\end{equation}
Thus we have shown \begin{equation}
dr\wedge d\Theta=\frac{||(\Theta,\Omega)||_{p(p-1)}^{(p-1)(2p-1)}}{(p-1)^{2}\Theta^{p(p-2)}\Omega^{p^{2}-3p+1}}\omega_{0},\end{equation}
which implies \prettyref{eq:psymp} in the claim.
\end{proof}
So from \prettyref{eq:symponsn} and \prettyref{eq:psymp} we see
the symplectic structure on $\overline{Gr_{1}(\mathbf{\mathbb{R}^{2})}}$
is determined by the Minkowski metric $||\cdot||_{p}$ on $\mathbb{R}^{2}$.

\section{Integral Geometry on Length in Minkowski Space}

The length of a straight line segment in $(\mathbb{R}^{2},F)$ can
be obtained by integrating the canonical contact form $\alpha$ introduced
in Section \prettyref{sub:Symplectic-Structures-on}. For any $x,y\in\mathbb{R}^{2}$,
let $\overrightarrow{xy}$ be the vector from $x$ to $y$, and \begin{equation}
c(t):=(x+\frac{t}{F(\overrightarrow{xy})}(y-x),dF(\frac{\overrightarrow{xy}}{F(\overrightarrow{xy})})),\, t\in[0,F(\overrightarrow{xy})]\end{equation}
 be a straight line segment in $T^{*}\mathbb{R}^{2}$. By the positive
homogeneity of F, one can get the useful fact that \begin{equation}
dF(\frac{\overrightarrow{xy}}{F(\overrightarrow{xy})})(\frac{\overrightarrow{xy}}{F(\overrightarrow{xy})})=F(\frac{\overrightarrow{xy}}{F(\overrightarrow{xy})})=1.\end{equation}
Therefore, \begin{equation}
\begin{array}{lllcccc}
\int_{c}\alpha & = & \int_{0}^{F(\overrightarrow{xy})}dF(\frac{\overrightarrow{xy}}{F(\overrightarrow{xy})})(\frac{\overrightarrow{xy}}{F(\overrightarrow{xy})})dt & = & F(\overrightarrow{xy}) & = & \mathbf{L}(\overline{xy}),\end{array}\label{eq:length}\end{equation}
where $\mathbf{L}(\overline{xy})$ is the length of $\overline{xy}$. 

Here let us introduce a general definition in integral geometry first. 
\begin{defn}
A Crofton measure $\phi$ for a degree $k$ measure $\Phi$ on $(\mathbb{R}^{n},F)$
is a measure on $\overline{Gr_{n-k}(\mathbb{R}^{n})}$ (Definition
\prettyref{def:The-affine-Grassmannian}), such that it satisfies
the Crofton-type formula\begin{equation}
\Phi(M)=\int_{P\in\overline{Gr_{n-k}(\mathbb{R}^{n})}}\#(M\cap P)\Phi(P)\end{equation}
for any compact convex subset $M$ $(\mathbb{R}^{n},F)$.
\end{defn}
Furthermore, we have the following 
\begin{prop}
\label{pro:The-Crofton-measure}The Crofton measure on $\overline{Gr_{1}(\mathbb{R}^{2})}$
for the length is $|\omega_{0}|$.
\end{prop}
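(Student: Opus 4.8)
The plan is to derive the Crofton formula from the general transfer principle of \prettyref{thm:fundamental}. Let $\mathcal{F}=\{(x,\ell)\in\mathbb{R}^{2}\times\overline{Gr_{1}(\mathbb{R}^{2})}:x\in\ell\}$ be the incidence manifold, with the two bundle projections $\pi_{1}(x,\ell)=x$ and $\pi_{2}(x,\ell)=\ell$; then $M_{\ell}:=\pi_{1}(\pi_{2}^{-1}(\ell))=\ell$ is an affine line, hence a smooth submanifold of $\mathbb{R}^{2}$, and $\#(\overline{M}\cap M_{\ell})=\#(\overline{M}\cap\ell)$ for every segment $\overline{M}$. Since $\overline{Gr_{1}(\mathbb{R}^{2})}$ is two-dimensional and $\omega_{0}$ is a nondegenerate $2$-form, $|\omega_{0}|$ is a top-degree density on it, so \prettyref{thm:fundamental} gives
\[
\int_{\overline{Gr_{1}(\mathbb{R}^{2})}}\#(\overline{M}\cap\ell)\,|\omega_{0}|(\ell)=\int_{\overline{M}}GT(|\omega_{0}|),\qquad GT(|\omega_{0}|)=\pi_{1*}\pi_{2}^{*}|\omega_{0}|.
\]
It therefore suffices to show that $GT(|\omega_{0}|)$ is the Minkowski length density $\mathbf{L}$ on $\mathbb{R}^{2}$ --- the degree-one density whose value on $v\in T_{x}\mathbb{R}^{2}$ is $F(v)$ --- since integrating $\mathbf{L}$ over the segment $\overline{M}$ gives exactly $\mathbf{L}(\overline{M})$.

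To identify $GT(|\omega_{0}|)$ I would pull everything back to the sphere bundle $S\mathbb{R}^{2}$ along the natural $2$-to-$1$ covering $q\colon S\mathbb{R}^{2}\to\mathcal{F}$, $q(x,\bar\xi)=(x,\ell(x,\bar\xi))$, which satisfies $\pi_{1}\circ q=\pi_{0}$ (the bundle projection $S\mathbb{R}^{2}\to\mathbb{R}^{2}$) and $\pi_{2}\circ q=p$. Put $\bar\alpha:=\varphi_{F}^{*}(\alpha|_{S^{*}\mathbb{R}^{2}})$, where $\varphi_{F}$ is the diffeomorphism of \prettyref{eq:diffeo} and $\alpha$ is the canonical contact form on $T^{*}\mathbb{R}^{2}$; then $d\bar\alpha=\bar\omega=p^{*}\omega_{0}$ (combining $\omega=d\alpha$ with \prettyref{thm:There-exists-a}), while \prettyref{eq:length} says precisely that $\bar\alpha$ integrated along the geodesic lift of a segment returns its Minkowski length. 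Hence, for a segment (or curve) $\overline{M}=c$,
\[
\int_{c}GT(|\omega_{0}|)=\int_{\pi_{1}^{-1}(c)}\pi_{2}^{*}|\omega_{0}|=\frac{1}{2}\int_{\pi_{0}^{-1}(c)}|p^{*}\omega_{0}|=\frac{1}{2}\int_{\pi_{0}^{-1}(c)}|d\bar\alpha|,
\]
where $\pi_{0}^{-1}(c)\subset S\mathbb{R}^{2}$ is the two-dimensional restriction of the sphere bundle over $c$ and the factor $\tfrac12$ is the covering degree of $q$. In coordinates $(x_{1},x_{2},\theta)$ on $S\mathbb{R}^{2}$ in which $\bar\xi=\bar\xi(\theta)$ runs once over the Minkowski unit circle $S_{F}^{1}$, the definition $\alpha_{\xi}(X)=\xi(\pi_{0*}X)$ and $\varphi_{F}(x,\bar\xi)=(x,dF(\bar\xi))$ give $\bar\alpha=\sum_{i}F_{i}(\theta)\,dx_{i}$ with $(F_{1}(\theta),F_{2}(\theta))=dF(\bar\xi(\theta))$, so $d\bar\alpha=\sum_{i}F_{i}'(\theta)\,d\theta\wedge dx_{i}$; restricting to $\pi_{0}^{-1}(c)$ and using Fubini reduces the last integral to $\int_{c}\Psi(c'(s))\,ds$, where
\[
\Psi(v):=\int_{S_{F}^{1}}\Big|\,\tfrac{d}{d\theta}\langle dF(\bar\xi(\theta)),v\rangle\,\Big|\,d\theta.
\]

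The heart of the matter, and the step that must be handled with care, is the evaluation of $\Psi(v)$ together with the resulting normalization. Note that $\Psi(v)$ is the total variation over the circle of $\theta\mapsto\langle dF(\bar\xi(\theta)),v\rangle$. Because $dF$ restricts to a diffeomorphism of the Minkowski unit sphere $S_{F}^{1}$ onto the dual unit sphere $\{\eta:F^{*}(\eta)=1\}$ (recorded after \prettyref{eq:dualnorm}), as $\theta$ traverses $S_{F}^{1}$ once the covector $dF(\bar\xi(\theta))$ traverses the dual unit sphere once, so that function takes values in $[-F(v),F(v)]$ --- since $\sup\{\eta(v):F^{*}(\eta)\le1\}=F(v)$ by biduality and the dual unit ball is symmetric --- and, $F$ being smooth and strictly convex, it attains each of its two extreme values exactly once; hence it is monotone on each of the two complementary arcs, and $\Psi(v)$ is a fixed positive multiple of $F(v)$. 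Substituting this into the two displays expresses $GT(|\omega_{0}|)$ as a universal constant times $\mathbf{L}$; confirming that, with the normalization of $\omega_{0}$ fixed in \prettyref{thm:There-exists-a}, this constant is the one in the statement amounts to the bookkeeping of the covering $q$ and of the parametrization of the fibres of $\pi_{1}$, and is where I expect the only genuine difficulty to lie. Once $GT(|\omega_{0}|)=\mathbf{L}$ is established, the Crofton formula for segments is immediate from the first display, and it extends to an arbitrary compact convex $M$ by the same application of \prettyref{thm:fundamental}.
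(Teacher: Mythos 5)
Your route is genuinely different from the paper's. The paper does not compute the Gelfand transform of $|\omega_{0}|$ at all: it fixes a segment $\overline{xy}$, takes the region $S\subset\overline{Gr_{1}(\mathbb{R}^{2})}$ of lines meeting it, lifts $S$ to a region $R\subset S\mathbb{R}^{2}$ via $p$, and applies Stokes' theorem to $\bar{\omega}=d\bar{\alpha}$ there; the boundary pieces lying over the fixed endpoints $x,y$ contribute nothing (the contact form vanishes on fibre directions), and the remaining boundary pieces are the lifts of $\overline{xy}$ itself, on which $\int\alpha$ gives the Minkowski length by \prettyref{eq:length}, yielding $\int_{S}|\omega_{0}|=4\mathbf{L}(\overline{xy})$. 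You instead invoke \prettyref{thm:fundamental} for the point--line incidence fibration and identify the $1$-density $GT(|\omega_{0}|)=\pi_{1*}\pi_{2}^{*}|\omega_{0}|$ fibrewise, reducing it to the total variation of $\theta\mapsto\langle dF(\bar{\xi}(\theta)),v\rangle$ over the unit circle; since $dF$ carries $S_{F}^{1}$ diffeomorphically onto the strictly convex dual circle, that function has one maximum $F(v)$ and one minimum $-F(v)$ and is monotone on the two arcs in between, so the total variation is $2\bigl(F(v)-(-F(v))\bigr)=4F(v)$. This is a perfectly good argument, arguably closer in spirit to the paper's later Sections on Gelfand transforms, and it has the advantage of producing the Crofton density pointwise rather than only testing it against segments.

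The one place you stop short is exactly the place you flag: the normalization. Your factor-of-$\tfrac12$ claim that $q:S\mathbb{R}^{2}\to\mathcal{F}$ is $2$-to-$1$ is inconsistent with the paper's conventions: via $\psi$, $\overline{Gr_{1}(\mathbb{R}^{2})}\cong TS_{F}^{1}$ records the direction $\bar{\xi}$, so the lines are in effect oriented and $q$ is a diffeomorphism (indeed the paper's own proof counts the two oriented lines $l_{xy}^{\pm}$ separately). With that convention the factor $\tfrac12$ disappears and your computation gives $\int_{c}GT(|\omega_{0}|)=\int_{c}\Psi(c'(s))\,ds=4\mathbf{L}(c)$, i.e.\ $\mathbf{L}(\gamma)=\tfrac14\int\#(\gamma\cap l)\,|\omega_{0}|$, which is precisely the formula the paper's proof establishes (the statement of the proposition suppresses the constant $\tfrac14$, but the proof does not). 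So your final assertion $GT(|\omega_{0}|)=\mathbf{L}$ should read $GT(|\omega_{0}|)=4\mathbf{L}$ (or $2\mathbf{L}$ if one insists on unoriented lines, each then met twice by orientation); once you commit to one convention and carry the degree of $q$ and the total-variation value $4F(v)$ through consistently, the argument is complete and agrees with the paper.
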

Our treatment of applying Stokes' theorem here is primarily based
on \cite{AD}.
\begin{proof}
From \prettyref{sec:The-Symplectic-Structure}, we know $\overline{Gr_{1}(\mathbb{R}^{2})}\overset{\overset{\psi}{\simeq}}{\rightarrow}TS_{F}$
which is a cylinder, and it has a symplectic form $\omega_{0}$ as
$TS_{F}$ embedded in $T^{*}\mathbb{R}^{2}$. 

Let $S:=\left\{ l\in\overline{Gr_{1}(\mathbb{R}^{2})}:l\cap\overset{\circ}{\overline{xy}}\neq\phi\right\} $,
$C_{x}$ and $C_{y}$ be the family of oriented lines passing through
$x$ and $y$ respectively, then $C_{x}\cap C_{y}=\left\{ l_{xy}^{+},l_{xy}^{-}\right\} $
that are the two oriented lines connecting $x$ and $y$, and $\partial S=C_{x}\cup C_{y}$. 

Let $R:=\left\{ \xi\in S\mathbb{R}^{2}\subset T\mathbb{R}^{2}:l((x+t(y-x),dF(\xi))\cap\overline{xy}\neq\phi\right\} $,
where $l((x+t(y-x),dF(\xi))$ is the line passing through $x+t(y-x)$
with direction $\xi$, then $p(R)=S$, where $p$ is the natural projection
from $S\mathbb{R}^{2}$ to $\overline{Gr_{1}(\mathbb{R}^{2})}$. 

Additionally, let $C_{x}^{'}=\left\{ \xi_{x}:\xi_{x}\in S_{x}\mathbb{R}^{2}\right\} ,$
$C_{y}^{'}=\left\{ \xi_{y}:\xi_{y}\in S_{y}\mathbb{R}^{2}\right\} ,$
$l_{xy}^{'+}=\frac{\overrightarrow{xy}}{F(\overrightarrow{xy})}\in S_{x}\mathbb{R}^{2},$
and $l_{xy}^{'-}=-\frac{\overrightarrow{xy}}{F(\overrightarrow{xy})}\in S_{y}\mathbb{R}^{2},$
then $p$ maps $C_{x}^{'}$, $C_{y}^{'}$, $l_{xy}^{'+}$ and $l_{xy}^{'-}$
to $C_{x}$, $C_{y}$, $l_{xy}^{+}$ and $l_{xy}^{-}$ respectively,
see \prettyref{fig:cyl}.

\begin{figure}
\includegraphics[scale=0.55]{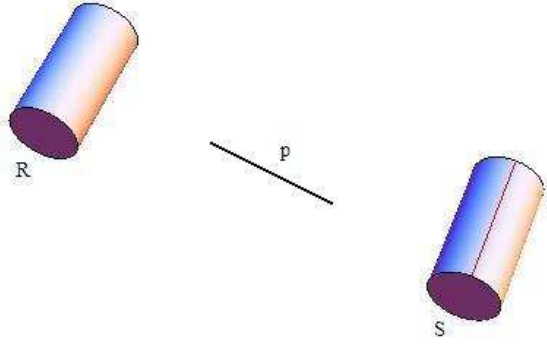}

\caption{\label{fig:cyl}From $S\mathbb{R}^{2}$ to $\overline{Gr_{1}(\mathbb{R}^{2})}$}

\end{figure}

Applying Stokes' theorem to the two regions individually, using the
fact that $\int_{C'}\alpha=0$ because of the fixed base points for
any $C'\subset C_{x}^{'}\mbox{\, or\,}C_{y}^{'}$ , and combining
with \prettyref{eq:length}, we obtain\begin{equation}
\begin{array}{lllll}
\int_{S}|\omega_{0}| & = & \int_{p(R)}|\omega_{0}| & = & \int_{R}|p^{*}\omega_{0}|\\
 &  &  & = & \int_{R}|\omega|\\
 &  &  & = & 2\int_{l_{xy}^{'+}\cup l_{xy}^{'-}}\alpha\\
 &  &  & = & 4\mathbf{L}(\overline{xy}).\end{array}\end{equation}

Therefore, for any rectifiable curve $\gamma$ in $(\mathbb{R}^{2},F)$,
the length of $\gamma$,\begin{equation}
\mathbf{L}(\gamma)=\frac{1}{4}\int_{l\in\overline{Gr_{1}(\mathbb{R}^{2})}}\#(\gamma\cap l)|\omega_{0}|,\end{equation}
which is the desired claim.\end{proof}
\begin{rem}
The proof above can be applied to $\mathbb{R}^{2}$ with projective
Finsler metric, in which geodesics are straight lines. Furthermore,
for $\mathbb{R}^{n}$ with a projective Finsler metric $F$, we choose
a plane $P\subset\mathbb{R}^{n}$ containing $\overline{xy}$ for
any $x,y\in\mathbb{R}^{n}$, then\begin{equation}
\mathbf{L}(\overline{xy})=\frac{1}{4}\int_{l\in\overline{Gr_{1}(P)}}\#(\overline{xy}\cap l)|\omega_{0}|.\end{equation}

\end{rem}

\section{Volume of Hypersurfaces}

A standard definition of Holmes-Thompson volume in Minkowski space
$(\mathbb{R}^{n},F)$ is given and its importance in Finsler geometry
and integral geometry is illustrated in \cite{AB}. 

The Holmes-Thompson volumes are defined as follows.
\begin{defn}
Let $N$ be a $k$-dimensional manifold and \begin{equation}
D^{*}N:=\left\{ \xi_{x}\in T^{*}N:F^{*}(\xi_{x})\leqslant1\right\} ,\end{equation}
where $F^{*}$ is the dual norm in \prettyref{eq:dualnorm}, be the
codisc bundle of $N$ , then the $k$-th Holmes-Thompson volume is
defined as \begin{equation}
vol_{k}(N):=\frac{1}{\epsilon_{k}}\int_{D^{*}N}|\omega^{k}|,\end{equation}
where $\epsilon_{k}$ is the Euclidean volume of $k$-dimensional
Euclidean ball and $\omega$ is the canonical symplectic form on the
cotangent bundle of $N$.

Let $\Lambda\in\overline{Gr_{k}(\mathbb{R}^{n})}$ for some $k\leq n$,
$\omega_{0}$ and $\hat{\omega}_{0}$ are the natural symplectic forms
on $\overline{Gr_{1}(\mathbb{R}^{n})}$ and $\overline{Gr_{1}(\Lambda)}$
constructed in the way described in \prettyref{sec:The-Symplectic-Structure}.
The relation between $\omega_{0}$ and $\hat{\omega}_{0}$ is shown
in the following\end{defn}
\begin{lem}
$i^{*}\omega_{0}=\omega$ for $i:\overline{Gr_{1}(\Lambda)}\hookrightarrow\overline{Gr_{1}(\mathbb{R}^{n})}$.\end{lem}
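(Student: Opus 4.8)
The plan is to pull the identity back to the sphere bundles, where by \prettyref{thm:There-exists-a} and \prettyref{eq:matri} the two symplectic forms both have the explicit Hessian expression, and there to reduce it to a one--line coordinate computation. After a translation we may assume $\Lambda$ is a linear subspace, so that $(\Lambda,F|_{\Lambda})$ is itself a Minkowski space; fixing a linear isomorphism $\Lambda\cong\mathbb{R}^{k}$ we choose linear coordinates $x_{1},\dots,x_{n}$ on $\mathbb{R}^{n}$ with $\Lambda=\{x_{k+1}=\dots=x_{n}=0\}$, and write $\hat x=(x_{1},\dots,x_{k})$, $\hat\xi=(\bar\xi_{1},\dots,\bar\xi_{k})$ for the induced coordinates on $\Lambda$ and its fibres.

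First I would record the commutative square
\[
\begin{array}{ccc}
S\Lambda & \overset{j}{\hookrightarrow} & S\mathbb{R}^{n}\\
\;\downarrow\hat{p} & & \;\downarrow p\\
\overline{Gr_{1}(\Lambda)} & \overset{i}{\hookrightarrow} & \overline{Gr_{1}(\mathbb{R}^{n})},
\end{array}
\]
where $j$ is the inclusion $S\Lambda=\{(x,\bar\xi):x\in\Lambda,\ \bar\xi\in T_{x}\Lambda,\ F(\bar\xi)=1\}\subset S\mathbb{R}^{n}$ (note that an $F|_{\Lambda}$-unit vector tangent to $\Lambda$ is the same thing as an $F$-unit vector tangent to $\Lambda$), $p$ and $\hat p$ are the line projections \prettyref{eq:linepro} for $\mathbb{R}^{n}$ and for $\Lambda$, and $i$ sends an affine line of $\Lambda$ to the same line viewed in $\mathbb{R}^{n}$; the square commutes because all four compositions send $(x,\bar\xi)$ to the line through $x$ with direction $\bar\xi$. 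Through the identification $\psi$ of \prettyref{eq:bij}, $i$ becomes the inclusion $TS^{k-1}_{F|_{\Lambda}}\hookrightarrow TS^{n-1}_{F}$ induced by the embedded submanifold $S^{k-1}_{F|_{\Lambda}}=S^{n-1}_{F}\cap\Lambda$, so $i$ is a smooth embedding; and $\hat p$ is a surjective submersion by the analogue for $\Lambda$ of \prettyref{lem:kernel} and the rank count, so $\hat p^{\ast}$ is injective on differential forms. Hence it suffices to prove $\hat p^{\ast}i^{\ast}\omega_{0}=\hat p^{\ast}\hat\omega_{0}$.

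Using the square together with \prettyref{thm:There-exists-a} applied to $\mathbb{R}^{n}$ and to $\Lambda$, I would compute
\[
\hat p^{\ast}i^{\ast}\omega_{0}=(p\circ j)^{\ast}\omega_{0}=j^{\ast}(p^{\ast}\omega_{0})=j^{\ast}\bar\omega,
\qquad
\hat p^{\ast}\hat\omega_{0}=\hat{\bar\omega},
\]
where, by \prettyref{eq:matri}, $\bar\omega=Hess(F)\star dx\wedge d\bar\xi|_{S\mathbb{R}^{n}}$ on $S\mathbb{R}^{n}$ and $\hat{\bar\omega}=Hess(F|_{\Lambda})\star d\hat x\wedge d\hat\xi|_{S\Lambda}$ on $S\Lambda$. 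So the whole statement reduces to $j^{\ast}\bar\omega=\hat{\bar\omega}$. Along $S\Lambda$ the coordinates $x_{k+1},\dots,x_{n}$ and $\bar\xi_{k+1},\dots,\bar\xi_{n}$ are constantly $0$, so their differentials vanish and $j^{\ast}\bar\omega=\sum_{1\le a,b\le k}\bigl(\partial_{a}\partial_{b}F\bigr)|_{S\Lambda}\,dx_{a}\wedge d\bar\xi_{b}$; and since for $a,b\le k$ the operator $\partial_{a}\partial_{b}$ commutes with restriction to the coordinate subspace $\Lambda$, one has $(\partial_{a}\partial_{b}F)|_{\Lambda}=\partial_{a}\partial_{b}(F|_{\Lambda})=Hess(F|_{\Lambda})_{ab}$, whence the right-hand side is exactly $Hess(F|_{\Lambda})\star d\hat x\wedge d\hat\xi|_{S\Lambda}=\hat{\bar\omega}$.

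I expect the only genuinely delicate point to be the bookkeeping of the second paragraph: verifying that $S\Lambda$ is an embedded submanifold of $S\mathbb{R}^{n}$ through which the line projection $p$ factors, and that the manifold structure put on $\overline{Gr_{1}(\Lambda)}$ in \prettyref{sec:The-Symplectic-Structure} is the one making $i$ a smooth embedding --- both of which come down to the $\psi$-identification with $TS^{k-1}_{F|_{\Lambda}}$ and the fact that $S^{k-1}_{F|_{\Lambda}}=S^{n-1}_{F}\cap\Lambda$. Once that is in place, the surjectivity-of-$\hat p^{\ast}$ argument and the Hessian-block identity make the rest routine; the affine case follows from the linear one by translation, since none of the constructions involved is affected by a translation of $\mathbb{R}^{n}$.
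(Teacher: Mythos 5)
Your proof is correct, and it reaches the same conclusion as the paper (both you and the paper in effect read the statement as $i^{*}\omega_{0}=\hat{\omega}_{0}$; the $\omega$ in the lemma as printed is a typo). The overall skeleton coincides: the commutative square relating $S\Lambda\hookrightarrow S\mathbb{R}^{n}$ with $\overline{Gr_{1}(\Lambda)}\hookrightarrow\overline{Gr_{1}(\mathbb{R}^{n})}$, two applications of \prettyref{thm:There-exists-a}, and a reduction to an identity of $2$-forms on the sphere bundle over $\Lambda$. Where you genuinely diverge is in how that identity is established. The paper stays on the cotangent side: it sets $j=\varphi_{F}\circ\hat{i}\circ\varphi_{F^{*}}$ and proves $j^{*}\alpha=\hat{\alpha}$ by the naturality of the tautological one-form, then differentiates to get $j^{*}\omega=\hat{\omega}$ \prettyref{eq:jome}; this is coordinate-free but routes through the cosphere bundles and the dual norm. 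You instead work entirely on the tangent side, invoking the explicit expression \prettyref{eq:matri} and checking that restricting $Hess(F)\star dx\wedge d\bar{\xi}$ to $S\Lambda$ kills all off-$\Lambda$ terms and leaves exactly $Hess(F|_{\Lambda})\star d\hat{x}\wedge d\hat{\xi}$, using that coordinate derivatives along $\Lambda$ commute with restriction to $\Lambda$. Your version buys concreteness, avoids $F^{*}$ and $\varphi_{F^{*}}$ altogether, and usefully makes explicit the step the paper leaves implicit, namely that $\hat{p}$ is a surjective submersion so $\hat{p}^{*}$ is injective and one may cancel it at the end; the paper's version buys invariance (no choice of adapted linear coordinates, though your use of them is legitimate since the Hessian formula holds in any linear frame) and generalizes verbatim to settings where no such coordinate formula is written down. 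Both are complete proofs.
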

\begin{proof}
First consider the diagram \begin{equation}
S^{*}\Lambda\overset{\overset{\varphi_{F^{*}}}{\simeq}}{\rightarrow}S\Lambda\overset{\hat{i}}{\hookrightarrow}S\mathbb{R}^{n}\overset{\overset{\varphi_{F}}{\simeq}}{\rightarrow}S^{*}\mathbb{R}^{n}.\label{eq:embdiag}\end{equation}
We have a canonical contact form $\hat{\alpha}_{\xi}(X):=\xi(\hat{\pi}_{0*}X)$
for $X\in T_{\xi}S^{*}\Lambda$ on $S^{*}\Lambda$ in diagram \prettyref{eq:embdiag},
where $\hat{\pi}_{0}:S^{*}\Lambda\to\Lambda$ is the natural projection,
and define $\hat{\omega}:=d\hat{\alpha}$ on $S^{*}\Lambda$.

Let $j=\varphi_{F}\circ\hat{i}\circ\varphi_{F^{*}}$, then for any
$X\in T_{\xi}S^{*}\Lambda$, \begin{equation}
(j^{*}\alpha)_{\xi}(X)=\alpha_{j(\xi)}(j_{*}X)=j(\xi)(\pi_{*}j_{*}X)=\xi(\hat{\pi}_{0*}X)=\hat{\alpha}_{\xi}(X)\label{eq:long}\end{equation}
in which $\alpha$ and $\omega$ on $S^{*}\mathbb{R}^{n}$ are introduced
in Section \prettyref{sub:Symplectic-Structures-on}, then \prettyref{eq:long}
implies \begin{equation}
j^{*}\alpha=\hat{\alpha},\label{eq:alpha}\end{equation}
and furthermore we have \begin{equation}
j^{*}\omega=\hat{\omega}\label{eq:jome}\end{equation}
by differentiating \prettyref{eq:alpha}.

Next, let $\hat{p}$ be the projection taking $\bar{\xi}_{x}\in S\Lambda$
to the line passing $x$ with the direction $\bar{\xi}_{x}$, and
similarly for $p$ which is described in \prettyref{eq:prodiag}.
Consider the diagram 

\begin{equation}
\begin{array}{ccccccc}
S^{*}\Lambda & \overset{\varphi_{F^{*}}}{\simeq} & S\Lambda & \overset{\hat{i}}{\hookrightarrow} & S\mathbb{R}^{n} & \overset{\varphi_{F}}{\simeq} & S^{*}\mathbb{R}^{n}\\
 &  & \downarrow\hat{p} &  & \downarrow p\\
 &  & \overline{Gr_{1}(\Lambda)} & \overset{i}{\hookrightarrow} & \overline{Gr_{1}(\mathbb{R}^{n})}\end{array}\label{eq:rect}\end{equation}
obtained by combining diagram \prettyref{eq:prodiag} and \prettyref{eq:long}.
By the definitions of the maps in \prettyref{eq:rect}, we know the
diagram is commutative. By \prettyref{thm:There-exists-a}, we have
$p^{*}\omega_{0}=\varphi_{F}^{*}\omega$ and $\hat{p}^{*}\hat{\omega}_{0}=\varphi_{F}^{*}\hat{\omega}$.
Combining with \prettyref{eq:jome} and the commutativity of the diagram
\prettyref{eq:rect}, we obtain the desired claim $i^{*}\omega_{0}=\hat{\omega}_{0}$.
\end{proof}
Suppose $N$ is a hypersurface in $(\mathbb{R}^{n},F)$, then we have
the following 
\begin{prop}
\label{pro:surfa}$vol_{n-1}(N)=\frac{1}{2\epsilon_{n-1}}\int_{l\in\overline{Gr_{1}(\mathbb{R}^{n})}}\#(N\cap l)|\omega_{0}^{n-1}|$,
where $\omega_{0}$ is the symplectic form on $\overline{Gr_{1}(\mathbb{R}^{n})}$. 
\end{prop}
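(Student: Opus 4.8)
The plan is to obtain the formula from the fundamental theorem on Gelfand transforms (\prettyref{thm:fundamental}), applied to the incidence double fibration attached to $N$, together with a pointwise computation of the resulting Gelfand transform by means of the identity $p^{*}\omega_{0}=\bar{\omega}$ of \prettyref{thm:There-exists-a}. This extends the argument behind \prettyref{pro:The-Crofton-measure} from curves in $(\mathbb{R}^{2},F)$ to hypersurfaces in $(\mathbb{R}^{n},F)$: the Stokes computation on a cylinder used there, which is unavailable for $n>2$, is replaced by the push-forward machinery of \prettyref{thm:fundamental}, and since everything is carried out with densities no orientability hypothesis on $N$ is needed.

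First I would restrict the diagram \prettyref{eq:prodiag} to base points lying on $N$. Set $\Sigma:=\{(x,\bar{\xi})\in S\mathbb{R}^{n}:x\in N\}$, a manifold of dimension $2n-2$, with the bundle projection $\pi_{N}:\Sigma\to N$ (fibre the unit sphere $S_{x}\mathbb{R}^{n}$, of dimension $n-1$) and with $p_{N}:=p|_{\Sigma}:\Sigma\to\overline{Gr_{1}(\mathbb{R}^{n})}$. By \prettyref{eq:span} the kernel of $dp$ is spanned by the geodesic field $\overline{\mathcal{X}}(\bar{\xi}_{x})=(\bar{\xi}_{x},0)$, which is transverse to $\Sigma$ exactly when $\bar{\xi}_{x}\notin T_{x}N$; hence $p_{N}$ is a local diffeomorphism over the dense open set of oriented lines meeting $N$ transversally, and over such a line $l$ the fibre $p_{N}^{-1}(l)$ is in bijection with $N\cap l$ (the direction being fixed by the orientation of $l$). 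Thus $N\overset{\pi_{N}}{\leftarrow}\Sigma\overset{p_{N}}{\rightarrow}\overline{Gr_{1}(\mathbb{R}^{n})}$ is a double fibration, and \prettyref{thm:fundamental} with the top-degree density $\Phi=|\omega_{0}^{n-1}|$ on $\overline{Gr_{1}(\mathbb{R}^{n})}$ gives $\int_{\overline{Gr_{1}(\mathbb{R}^{n})}}\#(N\cap l)\,|\omega_{0}^{n-1}|=\int_{N}GT(|\omega_{0}^{n-1}|)$, where $GT(|\omega_{0}^{n-1}|)=\pi_{N*}p_{N}^{*}|\omega_{0}^{n-1}|$ is an $(n-1)$-density on $N$. (For non-compact $N$ one localizes first by a partition of unity; all that follows is local on $N$.)

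Next I would compute $GT(|\omega_{0}^{n-1}|)$ pointwise. By \prettyref{thm:There-exists-a} and \prettyref{eq:matri}, on $\Sigma$ one has $p_{N}^{*}|\omega_{0}^{n-1}|=\bigl|(Hess(F)\star dx\wedge d\bar{\xi})^{n-1}\bigr|$. Fix the point and choose linear coordinates $y_{1},\dots,y_{n}$ centered at it with $T_{x}N=\{y_{n}=0\}$, so that near $x$ the hypersurface is a graph $y_{n}=h(y')$ with $dh(0)=0$; parametrize $\Sigma$ by $y'=(y_{1},\dots,y_{n-1})$ together with coordinates on the fibre sphere. At $x$ the terms containing $dy_{n}$ vanish, so $\bar{\omega}|_{\Sigma}$ equals $\sum_{i=1}^{n-1}dy_{i}\wedge\theta_{i}$ there, with $\theta_{i}:=\sum_{j}Hess(F)_{ij}\,d\bar{\xi}_{j}=d\bigl(dF(\bar{\xi})_{i}\bigr)$, i.e. the differential of the $i$-th component of the covector $dF(\bar{\xi})$; consequently $GT(|\omega_{0}^{n-1}|)$ at $x$ equals $(n-1)!\,\bigl(\int_{S_{x}\mathbb{R}^{n}}|\theta_{1}\wedge\dots\wedge\theta_{n-1}|\bigr)\,|dy'|$. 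The key observation is that, writing $\eta:=dF(\bar{\xi})|_{T_{x}N}=(dF(\bar{\xi})_{1},\dots,dF(\bar{\xi})_{n-1})$, one has $\theta_{1}\wedge\dots\wedge\theta_{n-1}=d\eta_{1}\wedge\dots\wedge d\eta_{n-1}$, the pullback by $\bar{\xi}\mapsto\eta$ of the Euclidean volume form of $T_{x}^{*}N$. Since $D_{x}^{*}N$ (the codisc of $N$ for the induced Finsler metric) is exactly the image of $D_{x}^{*}\mathbb{R}^{n}$ under the restriction map $T_{x}^{*}\mathbb{R}^{n}\to T_{x}^{*}N$ — the dual unit ball of a restricted norm being the projection of the dual unit ball, by Hahn--Banach — and $dF$ maps $S_{x}\mathbb{R}^{n}$ diffeomorphically onto $S_{x}^{*}\mathbb{R}^{n}$ while restriction $S_{x}^{*}\mathbb{R}^{n}\to D_{x}^{*}N$ is generically two-to-one, the fibre integral equals $2\,\mathrm{vol}_{\mathrm{eucl}}(D_{x}^{*}N)$. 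Hence $GT(|\omega_{0}^{n-1}|)=2(n-1)!\,\mathrm{vol}_{\mathrm{eucl}}(D_{x}^{*}N)\,|dy'|$.

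Finally, integrating over $N$ and using Fubini for the Liouville density $|\omega^{n-1}|/(n-1)!$ on $T^{*}N$ (whose fibrewise integral over $D_{x}^{*}N$ is $\mathrm{vol}_{\mathrm{eucl}}(D_{x}^{*}N)$), I get $\int_{N}GT(|\omega_{0}^{n-1}|)=2(n-1)!\int_{N}\mathrm{vol}_{\mathrm{eucl}}(D_{x}^{*}N)\,|dy'|=2\int_{D^{*}N}|\omega^{n-1}|=2\epsilon_{n-1}\,vol_{n-1}(N)$, which combined with the reduction above yields $vol_{n-1}(N)=\frac{1}{2\epsilon_{n-1}}\int_{\overline{Gr_{1}(\mathbb{R}^{n})}}\#(N\cap l)\,|\omega_{0}^{n-1}|$; for $n=2$ this reproduces \prettyref{pro:The-Crofton-measure} since $\epsilon_{1}=2$. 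The main obstacle is the pointwise step: one must justify that only the $0$-jet of $\bar{\omega}|_{\Sigma}$ at a point controls the push-forward density, deal with the branch (``shadow boundary'') locus where the two-to-one map degenerates, and keep exact track of the normalizing factor $2\epsilon_{n-1}$; conceptually this step is the statement that the Holmes--Thompson volume of a hypersurface is the normalized volume of the shadow of the ambient codisc bundle. An alternative that avoids the explicit spherical integral is to introduce $\Psi:\Sigma\to T^{*}N$, $(x,\bar{\xi})\mapsto(x,dF(\bar{\xi})|_{T_{x}N})$, to show $\Psi^{*}\omega=\pm\,\bar{\omega}|_{\Sigma}$ — both being restrictions of the canonical form of $T^{*}\mathbb{R}^{n}$, exactly as in the lemma $i^{*}\omega_{0}=\hat{\omega}_{0}$ above — and that $\Psi$ is a two-to-one branched cover of $D^{*}N$, so that $\int_{\Sigma}|\bar{\omega}^{n-1}|=2\int_{D^{*}N}|\omega^{n-1}|$ directly, and then to push this identity through the double fibration of the first step.
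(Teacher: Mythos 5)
Your argument is correct in outline and lands on the right constant, but it takes a genuinely different route from the paper's. The paper first reduces to the case where $N$ is a compact convex piece of a hyperplane, then applies Stokes' theorem twice with the contact forms $\hat{\alpha}$ and $\alpha$ to identify $\int_{D^{*}N}\hat{\omega}^{n-1}$ with $\int_{S_{+}^{*}\mathbb{R}^{n}\cap\pi_{0}^{-1}(N)}\omega^{n-1}$, and finally pushes this down to the half-space of lines $\overline{Gr_{1}^{+}(\mathbb{R}^{n})}$ via $\pi^{*}\omega_{0}=\omega$, the factor $2$ appearing when one passes from $\overline{Gr_{1}^{+}(\mathbb{R}^{n})}$ to all oriented lines. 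You instead keep $N$ an arbitrary hypersurface, apply the Gelfand-transform theorem \prettyref{thm:fundamental} to the restricted sphere bundle $\Sigma\to N$, and evaluate $\pi_{N*}p_{N}^{*}|\omega_{0}^{n-1}|$ fibrewise: since $F$ is translation invariant, $\bar{\omega}=\sum_{i}dx_{i}\wedge d\bigl(dF(\bar{\xi})_{i}\bigr)$ splits over a point $x$ into base and fibre parts, the fibre integral is the absolute pullback of Lebesgue measure on $T_{x}^{*}N$ under $\bar{\xi}\mapsto dF(\bar{\xi})|_{T_{x}N}$, hence equals $2\,\mathrm{vol}(D_{x}^{*}N)$ by the Hahn--Banach shadow fact, and Fubini/Liouville gives $2\epsilon_{n-1}\,vol_{n-1}(N)$ in the paper's normalization (with $\epsilon_{1}=2$ recovering Proposition \prettyref{pro:The-Crofton-measure}). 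What each approach buys: the paper's route avoids any fibre Jacobian computation but rests on the unargued reduction ``it suffices to treat $N$ affine'' and on boundary bookkeeping in the Stokes step; yours needs no such reduction, works for any immersed hypersurface directly, and is closer in spirit to the Gelfand-transform machinery of Proposition \prettyref{prop:tran}, at the price of the branch-locus and area-formula details you flag. Two of your own worries are minor: the push-forward of a density at $x$ depends only on its values along the fibre over $x$, so no jet argument is needed beyond choosing coordinates with $T_{x}N=\{y_{n}=0\}$; and the shadow boundary is a codimension-one subset of the strictly convex dual sphere (strict convexity coming from condition (3) of Definition \prettyref{def:A-smooth-function}), so it contributes nothing to the absolute integral.
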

This idea of intrinsic proof is given by Dr. Joseph H. G. Fu.
\begin{proof}
It suffices to prove the claim in the case when $N$ is affine. Without
loss of generality, assume $N\subset\mathbb{R}^{n-1}\subset\mathbb{R}^{n}$
is compact and convex with smooth boundary.

Consider the following diagram\begin{equation}
S^{*}N\overset{\hat{i}}{\hookrightarrow}S^{*}\mathbb{R}^{n-1}\overset{\overset{\varphi_{F^{*}}}{\cong}}{\rightarrow}S\mathbb{R}^{n-1}\overset{i}{\hookrightarrow}S\mathbb{R}^{n}\overset{\overset{\varphi_{F}}{\cong}}{\rightarrow}S^{*}\mathbb{R}^{n}\overset{\pi}{\rightarrow}\overline{Gr_{1}(\mathbb{R}^{n})},\end{equation}
where $i$ and $k$ are embeddings, and $\pi:=p\circ\varphi_{F}^{-1}=p\circ\varphi_{F^{*}}$
is a projection from diagram \prettyref{eq:prodiag}.

As $N$ is a $(n-1)$-dimensional manifold, the canonical contact
form $\hat{\alpha}$ on $S^{*}N$ is defined as $\hat{\alpha}_{\xi}(X):=\xi(\hat{\pi}_{0*}X)$
for $X\in T_{\xi}S^{*}N$, where $\hat{\pi}_{0}:S^{*}N\to N$ is the
projection. 

Let $j=\varphi_{F}\circ i\circ\varphi_{F^{*}}$, then \begin{equation}
(j^{*}\alpha)_{\hat{i}(\xi)}(\hat{i}_{*}X)=((\varphi_{F}\circ i\circ\varphi_{F^{*}})^{*}\alpha)_{\hat{i}(\xi)}(\hat{i}_{*}X)=\xi(\pi_{0*}X)=\hat{\alpha}_{\xi}(X).\end{equation}
for any $X\in T_{\xi}S^{*}N$, which implies $(\hat{i}\circ j)^{*}\alpha=\hat{i}^{*}j^{*}\alpha=\hat{\alpha}$,
and then $(i\circ j)^{*}\omega=\hat{\omega}$ where $\hat{\omega}:=d\hat{\alpha}$
and $\omega$ is introduced in Section \prettyref{sub:Symplectic-Structures-on}.

Applying Stokes' theorem, we have\begin{equation}
\begin{array}{lllll}
\int_{D^{*}N}\hat{\omega}^{n-1} & = & \int_{\partial(D^{*}N)}\hat{\alpha}\wedge\hat{\omega}^{n-2} & = & \int_{S^{*}N}\hat{\alpha}\wedge\hat{\omega}^{n-2}+\int_{\hat{\pi}_{0}^{-1}(\partial N)}\hat{\alpha}\wedge\hat{\omega}^{n-2}\\
 &  &  & = & \int_{S^{*}N}\hat{\alpha}\wedge\hat{\omega}^{n-2}\end{array}\label{eq:longeq}\end{equation}
since the degree of $\hat{\alpha}\wedge\hat{\omega}^{n-2}$ on the
compoment mesuring perturbations of base points is bigger than the
dimension of the base manifold, and\begin{equation}
\begin{array}{lll}
\int_{S_{+}^{*}\mathbb{R}^{n}\cap\pi_{0}^{-1}(N)}\omega^{n-1} & = & \int_{\partial(S_{+}^{*}\mathbb{R}^{n}\cap\pi_{0}^{-1}(N))}\alpha\wedge\omega^{n-2}\\
 & = & \int_{S^{*}N}\hat{i}^{*}j^{*}\alpha\wedge\hat{i}^{*}j^{*}\omega^{n-2}+\int_{\hat{\pi}_{0}^{-1}(\partial N)}\hat{i}^{*}j^{*}\alpha\wedge\hat{i}^{*}j^{*}\omega^{n-2}\\
 & = & \int_{S^{*}N}\hat{\alpha}\wedge\hat{\omega}^{n-2},\end{array}\end{equation}
where \begin{equation}
S_{+}^{*}\mathbb{R}^{n}=\left\{ \xi\in S^{*}\mathbb{R}^{n}:\xi(v_{0})\geqslant0,\, v_{0}\mbox{\, satisfies}\, dF(v_{0})(v)=0\,\,\mbox{for}\:\mbox{all}\, v\in S\mathbb{R}^{n-1}\right\} .\end{equation}
Therefore,\begin{equation}
\int_{D^{*}N}\hat{\omega}^{n-1}=\int_{S_{+}^{*}\mathbb{R}^{n}\cap\pi_{2}^{-1}(N)}\omega^{n-1}.\label{eq:sphlines}\end{equation}

Now let us consider the {}``upper'' half space of geodesics in $(\mathbb{R}^{n},F)$,
\begin{equation}
\overline{Gr_{1}^{+}(\mathbb{R}^{n})}:=\left\{ l(x,\eta):dF(\eta)(\eta_{0})\geqslant0,\,\eta_{0}\:\mbox{satisfies}\, dF(\eta_{0})(v)=0\,\,\mbox{for}\:\mbox{all}\, v\in S\mathbb{R}^{n-1}\right\} .\end{equation}
Since $\pi^{*}\omega_{0}=\omega$, then we get\begin{equation}
\begin{array}{lll}
\int_{S_{+}^{*}\mathbb{R}^{n}\cap\pi_{0}^{-1}(N)}\omega^{n-1} & = & \int_{S_{+}^{*}\mathbb{R}^{n}\cap\pi_{0}^{-1}(N)}\pi^{*}\omega_{0}^{n-1}\\
 & = & \int_{\pi^{-1}(l)\in S_{+}^{*}\mathbb{R}^{n}\cap\pi_{0}^{-1}(N)}\#(N\cap l)\omega_{0}^{n-1}\\
 & = & \int_{l\in\overline{Gr_{1}^{+}(\mathbb{R}^{n})}}\#(N\cap l)\omega_{0}^{n-1}.\end{array}\end{equation}
Combining with \prettyref{eq:sphlines}, we obtain\begin{equation}
\begin{array}{lll}
vol_{n-1}(N) & = & \frac{1}{\epsilon_{n-1}}\int_{D^{*}N}\hat{\omega}^{n-1}\\
 & = & \frac{1}{\epsilon_{n-1}}\int_{l\in\overline{Gr_{1}^{+}(\mathbb{R}^{n})}}\#(N\cap l)\omega_{0}^{n-1}\\
 & = & \frac{1}{2\epsilon_{n-1}}\int_{l\in\overline{Gr_{1}(\mathbb{R}^{n})}}\#(N\cap l)|\omega_{0}^{n-1}|,\end{array}\end{equation}
that finishes the proof. 
\end{proof}

\section{$k$-th Holmes-Thompson Volume and Crofton Measures\label{sec:k-th-Holmes-Thompson-Volume}}

Let us introduce a general fact first. Busemann constructed all projective
metrics $F$ for projective Finsler space $(\mathbb{R}^{n},F)$, and
it was also proved in \cite{S2} by Schneider using spherical harmonics.
\begin{thm}
(Busemann) Suppose $F$ is a projective metric on $\mathbb{R}^{n}$,
then $F(x,v)=\int_{\xi\in S^{n-1}}|\langle\xi,v\rangle|f(\xi,\langle\xi,x\rangle)\Omega_{0}$
for any $(x,v)\in T\mathbb{R}^{n}$, where $\Omega_{0}$ is the Euclidean
volume form on $S^{n-1}$ and $f$ is some continuous function on
$S^{n-1}\times\mathbb{R}$. 
\end{thm}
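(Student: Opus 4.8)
The plan is to read the asserted identity as the statement that $F$ lies in the range of the operator $T_nf(x,v):=\int_{S^{n-1}}|\langle\xi,v\rangle|\,f(\xi,\langle\xi,x\rangle)\,\Omega_0$ acting on continuous kernels $f$ on $S^{n-1}\times\mathbb{R}$, and to produce such an $f$ by inverting $T_n$. It helps to note the equivalent integral-geometric form first: an affine hyperplane is $H_{\xi,t}=\{x:\langle\xi,x\rangle=t\}$, so a continuous $f$ on $S^{n-1}\times\mathbb{R}$ is the same as a density $f(\xi,t)\,\Omega_0\,dt$ on $\overline{Gr_{n-1}(\mathbb{R}^n)}$, and the identity $F=T_nf$ is equivalent --- by integrating along a segment and differentiating back --- to the Crofton formula $\mathbf{L}(\overline{xy})=\int_{\overline{Gr_{n-1}(\mathbb{R}^n)}}\#(\overline{xy}\cap H)\,f(\xi,t)\,\Omega_0\,dt$ for the Finsler length: in one direction, applying this to the short segment from $x$ to $x+\epsilon v$, using that $\#(\overline{x,x+\epsilon v}\cap H_{\xi,t})$ is $1$ exactly when $t$ lies strictly between $\langle\xi,x\rangle$ and $\langle\xi,x\rangle+\epsilon\langle\xi,v\rangle$ and $0$ otherwise, and letting $\epsilon\to 0$ with $\mathbf{L}(\overline{x,x+\epsilon v})=\epsilon F(x,v)+O(\epsilon^2)$, yields $F=T_nf$. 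So the task reduces to producing a continuous Crofton density for $\mathbf{L}$ on the space of affine hyperplanes.

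For $n=2$ this is already done: affine hyperplanes and affine lines coincide, Proposition~\ref{pro:The-Crofton-measure} gives the Crofton measure $\tfrac14|\omega_0|$ for length on $\overline{Gr_1(\mathbb{R}^2)}$, and rewriting $\tfrac14|\omega_0|$ in the coordinates $(\xi,t)$ --- an explicit change of variables via \prettyref{eq:matri} --- exhibits it as $f(\xi,t)\,\Omega_0\,dt$ with $f$ continuous (and nonnegative, since $|\omega_0|$ is), so the first paragraph closes this case. For general $n$ I would invert $T_n$ directly. Factor $T_n$ as the pullback $f(\xi,t)\mapsto g(x,\xi):=f(\xi,\langle\xi,x\rangle)$ along the linear functionals $\langle\xi,\cdot\rangle$, followed, for each fixed $x$, by the spherical cosine transform $g(x,\cdot)\mapsto\int_{S^{n-1}}|\langle\xi,v\rangle|\,g(x,\xi)\,\Omega_0$ in the $\xi$-variable. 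The cosine transform is diagonalized by spherical harmonics: it annihilates odd harmonics --- harmless, since $F$ is reversible in $v$ by (\prettyref{enu:posi}) of Definition \prettyref{def:A-smooth-function} --- and scales the degree-$2k$ component by a nonzero eigenvalue $\lambda_{2k}$, so it is formally invertible and recovers the even part of $g(x,\xi)$ from $v\mapsto F(x,v)$. Finally, Hamel's equations $\partial^2F/\partial x^i\partial v^j=\partial^2F/\partial x^j\partial v^i$ for all $i,j$, which characterize projectivity and hold here because straight lines are geodesics, force the recovered $g(x,\xi)$ to depend on $x$ only through $\langle\xi,x\rangle$, so it is the pullback of a well-defined kernel $f(\xi,t)$, which is what we want. (Alternatively one reduces to the plane, since by the remark after Proposition~\ref{pro:The-Crofton-measure} every restriction $F|_P$ to a $2$-plane is planar-projective, and then glues the planar densities; but that gluing is again precisely the cosine-transform inversion.)

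The main obstacle is this higher-dimensional inversion, in two respects. Because $\lambda_{2k}\to 0$, undoing the cosine transform costs derivatives, so to return to \emph{continuous} (indeed smooth) kernels one must know that the spherical-harmonic coefficients of $v\mapsto F(x,v)$ decay fast enough; this is exactly where the smoothness hypothesis (\prettyref{enu:hes}) of Definition \prettyref{def:A-smooth-function}, together with projectivity, enters essentially, and it is the technically delicate point (pinning the $x$-dependence down to the rank-one form $\langle\xi,x\rangle$ via Hamel's equations is the other place projectivity is used). If one additionally wants $f\geq 0$, so that $f(\xi,t)\,\Omega_0\,dt$ is a genuine positive Crofton measure and the geometric picture is literal, this is Hilbert's fourth problem in dimension $n$, and the nonnegativity of the reconstructed density must be extracted from the triangle inequality for $\mathbf{L}$ in the manner of Busemann and Pogorelov, or simply quoted from \cite{S2}. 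For $n=2$ positivity is free since $\tfrac14|\omega_0|$ is visibly nonnegative, so there the argument of the first two paragraphs is already a complete proof.
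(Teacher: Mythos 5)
The paper does not prove this statement at all: it is quoted as a known theorem, attributed to Busemann and to Schneider's spherical-harmonics proof in \cite{S2}, so there is no in-paper argument to measure you against. Your sketch does follow the standard route (essentially the one in \cite{S2}): read the identity as a Crofton formula over affine hyperplanes, factor the operator as pullback along $\langle\xi,\cdot\rangle$ composed with the cosine transform in $v$, invert the cosine transform by spherical harmonics, and use projectivity to control the $x$-dependence. You also correctly observe that only continuity of $f$ is demanded, so positivity (Hilbert's fourth problem proper) is a side issue, and your $n=2$ reduction via Proposition \prettyref{pro:The-Crofton-measure} is reasonable.

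However, as a proof the proposal has genuine gaps precisely at the two places where the theorem's content lies, and you flag them without closing them. First, inverting the cosine transform only \emph{formally} recovers the even kernel $g(x,\xi)$ for each fixed $x$; since the eigenvalues $\lambda_{2k}$ tend to $0$, you must show that the harmonic coefficients of $v\mapsto F(x,v)$ decay fast enough, uniformly and continuously in $x$, to conclude that $(x,\xi)\mapsto g(x,\xi)$ is continuous --- this requires an actual estimate (smoothness of $F$ off the zero section plus joint continuity of derivatives), not just the remark that it is ``where smoothness enters.'' Second, and more seriously, the assertion that Hamel's equations force the recovered $g(x,\xi)$ to depend on $x$ only through $\langle\xi,x\rangle$ is stated, not argued; one must check how the Hamel condition transforms under the inverse cosine transform (equivalently, run the Radon/cosine-transform intertwining computation of Schneider, Pogorelov, or \'Alvarez--Fernandes), and this is the heart of Busemann's theorem. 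The alternative you mention --- restricting to $2$-planes and gluing the planar Crofton densities --- hides the same difficulty in the gluing step, as you acknowledge. So the proposal is a correct outline of the accepted strategy, but not a complete proof; given that the paper simply cites the result, the honest options are either to cite \cite{S2} as the paper does or to carry out the two steps above in detail.
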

In fact, for the case that $(\mathbb{R}^{n},F)$ is Minkowski, we
can use a theorem on surjectivity of cosine transform, \begin{equation}
\mathcal{C}(f)(\cdot)=\int_{\xi\in S^{n-1}}|\langle\xi,\cdot\rangle|f(\xi)\Omega_{0},\label{eq:cosine}\end{equation}
of even functions from Chapter 3 of \cite{G}, 
\begin{thm}
For any even $C^{2[(n+3)/2]}$ function $g$ on $S^{n-1}$, $n\geqslant2$,
where $[\cdot]$ is the greatest integer function, there is an even
function $f$ on $S^{n-1}$ such that $\mathcal{C}(f)=g$.
\end{thm}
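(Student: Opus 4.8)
The plan is to diagonalize the cosine transform $\mathcal{C}$ by spherical harmonics and then invert it component by component, using the prescribed differentiability of $g$ to absorb the unbounded growth of the componentwise inverse. Recall the orthogonal Hilbert space decomposition $L^{2}(S^{n-1})=\bigoplus_{k\geq 0}\mathcal{H}_{k}$, where $\mathcal{H}_{k}$ is the space of restrictions to $S^{n-1}$ of harmonic homogeneous polynomials of degree $k$, and write $g=\sum_{k\geq 0}g_{k}$ with $g_{k}\in\mathcal{H}_{k}$ the orthogonal projection of $g$. Since $g$ is even and an element of $\mathcal{H}_{k}$ satisfies $Y(-x)=(-1)^{k}Y(x)$, the projection $g_{k}$ vanishes for every odd $k$.

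Next I would observe that $\mathcal{C}$ commutes with the action of $O(n)$ and has a kernel $(\xi,u)\mapsto|\langle\xi,u\rangle|$ depending on $\xi,u$ only through $\langle\xi,u\rangle$; hence by the Funk--Hecke theorem $\mathcal{C}$ preserves each $\mathcal{H}_{k}$ and acts on it as multiplication by the scalar
\begin{equation}
\lambda_{k}=\sigma_{n-2}\int_{-1}^{1}|t|\,\frac{C_{k}^{(n-2)/2}(t)}{C_{k}^{(n-2)/2}(1)}\,(1-t^{2})^{(n-3)/2}\,dt,
\end{equation}
where $C_{k}^{\nu}$ are the Gegenbauer polynomials and $\sigma_{n-2}$ is the surface area of $S^{n-2}$. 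Because $|t|$ is even, $\lambda_{k}=0$ for odd $k$; for even $k$ the integral is nonzero, and evaluating it in closed form as a ratio of Gamma functions and applying Stirling's formula gives the asymptotics $|\lambda_{k}|\asymp k^{-(n+2)/2}$ as $k\to\infty$ through the even integers (one can check this directly for small $n$: $|\lambda_{k}|\asymp k^{-2}$ for $n=2$, $k^{-5/2}$ for $n=3$, $k^{-3}$ for $n=4$). Since only even degrees appear in the expansion of $g$, the formal inverse
\begin{equation}
f:=\sum_{k\ \mathrm{even}}\lambda_{k}^{-1}g_{k}
\end{equation}
involves no division by zero, and each summand is an even function.

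It then remains to check that this series defines a genuine continuous function and that applying $\mathcal{C}$ recovers $g$. Each $g_{k}$ is an eigenfunction of the Laplace--Beltrami operator $\Delta$ on $S^{n-1}$ with eigenvalue $-k(k+n-2)$, so if $g\in C^{2\ell}(S^{n-1})$ then $\Delta^{\ell}g\in C^{0}\subset L^{2}$ and, comparing projections onto $\mathcal{H}_{k}$, one gets $\|g_{k}\|_{L^{2}}\leq C\,k^{-2\ell}\,\|g\|_{C^{2\ell}}$. Combining this with the eigenvalue bound $|\lambda_{k}^{-1}|\leq C\,k^{(n+2)/2}$ and the standard reproducing-kernel estimate $\|Y_{k}\|_{L^{\infty}}\leq C\,k^{(n-2)/2}\|Y_{k}\|_{L^{2}}$ for $Y_{k}\in\mathcal{H}_{k}$, we obtain
\begin{equation}
\|\lambda_{k}^{-1}g_{k}\|_{L^{\infty}}\leq C\,k^{(n+2)/2+(n-2)/2-2\ell}\,\|g\|_{C^{2\ell}}=C\,k^{\,n-2\ell}\,\|g\|_{C^{2\ell}}.
\end{equation}
With $\ell=[(n+3)/2]$ one has $2\ell\geq n+2$, hence the exponent $n-2\ell\leq-2<-1$ and the series for $f$ converges absolutely and uniformly on $S^{n-1}$; its sum $f$ is therefore continuous, and even since every term is even. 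Finally $\mathcal{C}$ is bounded on $L^{2}(S^{n-1})$ (its kernel is bounded), so it may be applied term by term to the convergent series, giving $\mathcal{C}(f)=\sum_{k\ \mathrm{even}}\lambda_{k}\lambda_{k}^{-1}g_{k}=\sum_{k\ \mathrm{even}}g_{k}=g$; as $\mathcal{C}(f)$ and $g$ are both continuous and agree almost everywhere, they agree everywhere, which is the assertion. (Note that $\ell=[(n+3)/2]$ is in fact the smallest integer with $2\ell>n+1$, so the hypothesis is exactly what this argument consumes.)

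The main obstacle is the sharp asymptotic estimate $|\lambda_{k}|\asymp k^{-(n+2)/2}$ for even $k$: everything else is routine bookkeeping with spherical harmonics, but this estimate is precisely what dictates the number of derivatives one must require of $g$, and obtaining it cleanly calls for either a careful reduction of the Gegenbauer integral above to a ratio of Gamma functions followed by Stirling's formula, or an appeal to the known asymptotic behaviour of Gegenbauer polynomials near $t=\pm1$. One must also verify, though this is routine, that $\lambda_{k}\neq0$ for all even $k$ and that the term-by-term identity $\mathcal{C}\big(\sum_{k}\lambda_{k}^{-1}g_{k}\big)=\sum_{k}g_{k}$ is legitimate.
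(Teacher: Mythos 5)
The paper does not prove this statement at all: it is imported verbatim from Chapter 3 of Groemer's book \cite{G} as a known surjectivity theorem for the cosine transform, so there is no in-paper argument to compare against. Your proposal is essentially the standard (and Groemer's own) proof: diagonalize $\mathcal{C}$ on the spaces $\mathcal{H}_{k}$ via Funk--Hecke, invert the nonzero even multipliers, and use the $C^{2[(n+3)/2]}$ hypothesis through the bound $\|g_{k}\|_{L^{2}}\leq Ck^{-2\ell}\|g\|_{C^{2\ell}}$ together with $\|Y_{k}\|_{\infty}\leq Ck^{(n-2)/2}\|Y_{k}\|_{L^{2}}$ to get uniform convergence of $\sum_{k\,\mathrm{even}}\lambda_{k}^{-1}g_{k}$; the bookkeeping $n-2\ell\leq-2$ is right, and the term-by-term application of $\mathcal{C}$ is justified as you say. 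The one ingredient you defer, namely that $\lambda_{k}\neq0$ for even $k$ with $|\lambda_{k}|\asymp k^{-(n+2)/2}$, is indeed the crux and is a known closed-form Gegenbauer/Gamma computation (your spot checks for $n=2,3$ are consistent with it), so the proposal is sound as a sketch, with that estimate being the only piece that would still need to be written out or cited.
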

From it we directly obtain that there exists an even function $f$
on $S^{n-1}$, such that

\begin{equation}
L(\overline{xy})=\frac{1}{4}\int_{\xi\in S^{n-1}}|\langle\xi,\overrightarrow{xy}\rangle|f(\xi)\Omega_{0}.\label{eq:lengthfor}\end{equation}

On the other hand, for any $v=\overrightarrow{xy}$, $x,y\in(\mathbb{R}^{2},F)$,
by Proposition \prettyref{pro:The-Crofton-measure} we know\begin{equation}
F(x,v)=\frac{1}{\omega_{n-1}}\int_{l\in\overline{Gr_{1}(\mathbb{R}^{2})}}\#(\overline{xy}\cap l)|\omega_{0}|.\end{equation}

In fact, there is a relation between $\Omega_{0}$ and $\omega_{0}$.
Considering the following double fibration\begin{equation}
\overline{Gr_{1}(\mathbb{R}^{n})}\overset{\pi_{1}}{\leftarrow}\mathcal{I}\overset{\pi_{2}}{\rightarrow}\overline{Gr_{n-1}(\mathbb{R}^{n})},\label{eq:double fibration}\end{equation}
where $\mathcal{I}=\left\{ (l,H)\in\overline{Gr_{1}(\mathbb{R}^{n})}\times\overline{Gr_{n-1}(\mathbb{R}^{n})}:l\subset H\right\} $,
we have 
\begin{prop}
\label{prop:tran}$GT(f\Omega_{0}\wedge dr)=\omega_{0}$, where $GT$
is the Gelfand transform for the double fibraton \eqref{eq:double fibration}
and $r$ is the Euclidean distance of a hyperplane $H$ to the origin. \end{prop}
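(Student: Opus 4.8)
The plan is to recognize $f\,\Omega_{0}\wedge dr$ as the Crofton measure for length relative to hyperplanes and then to transport it, via the fundamental theorem of integral geometry (Theorem~\ref{thm:fundamental}) applied to the incidence fibration \eqref{eq:double fibration}, to the line Crofton measure for length, which is $\omega_{0}$ by Proposition~\ref{pro:The-Crofton-measure}. First I would rewrite \eqref{eq:lengthfor} over hyperplanes. Parametrizing an oriented hyperplane by its Euclidean unit normal $\xi\in S^{n-1}$ and signed distance $r\in\mathbb{R}$ gives a two-to-one cover $q\colon S^{n-1}\times\mathbb{R}\to\overline{Gr_{n-1}(\mathbb{R}^{n})}$ with deck map $(\xi,r)\mapsto(-\xi,-r)$; since $\int_{\mathbb{R}}\#\big(\overline{xy}\cap q(\xi,r)\big)\,dr=|\langle\xi,\overrightarrow{xy}\rangle|$ for almost every $\xi$ and $f$ is even, \eqref{eq:lengthfor} yields
\begin{equation}
\mathbf{L}(\overline{xy})=c\int_{H\in\overline{Gr_{n-1}(\mathbb{R}^{n})}}\#(\overline{xy}\cap H)\,|f\,\Omega_{0}\wedge dr|
\end{equation}
for a dimensional constant $c$; thus $|f\,\Omega_{0}\wedge dr|$ is (a multiple of) a Crofton measure for length on $\overline{Gr_{n-1}(\mathbb{R}^{n})}$.

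Next I would feed this into Theorem~\ref{thm:fundamental} for \eqref{eq:double fibration}. Here $M_{H}:=\pi_{1}(\pi_{2}^{-1}(H))=\{\,l\in\overline{Gr_{1}(\mathbb{R}^{n})}:l\subset H\,\}$, the density $\Phi:=f\,\Omega_{0}\wedge dr$ is top-degree on $\overline{Gr_{n-1}(\mathbb{R}^{n})}$, and $GT(\Phi)$ is a $2$-form because the fibre of $\pi_{1}$ has dimension $n-2$. Fix $x,y$, choose an affine $2$-plane $P\supset\overline{xy}$, and take the test surface $\overline{M}:=\{\,l\in\overline{Gr_{1}(P)}:l\cap\overline{xy}\neq\emptyset\,\}$, a $2$-dimensional (immersed) submanifold of $\overline{Gr_{1}(\mathbb{R}^{n})}$. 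For generic $H$ the slice $P\cap H$ is a single line, which lies in $\overline{M}$ exactly when it meets $\overline{xy}$, i.e.\ exactly when $H$ meets $\overline{xy}$ (since $\overline{xy}\subset P$), so $\#(\overline{M}\cap M_{H})=\#(\overline{xy}\cap H)$. Theorem~\ref{thm:fundamental} and the previous paragraph then give $\int_{\overline{M}}|GT(f\,\Omega_{0}\wedge dr)|=\int_{H}\#(\overline{xy}\cap H)\,|f\,\Omega_{0}\wedge dr|=c^{-1}\mathbf{L}(\overline{xy})$, while the lemma that $i^{*}\omega_{0}$ restricts to the symplectic form $\hat\omega_{0}$ of $\overline{Gr_{1}(P)}$ (for $i\colon\overline{Gr_{1}(P)}\hookrightarrow\overline{Gr_{1}(\mathbb{R}^{n})}$), together with Proposition~\ref{pro:The-Crofton-measure} inside the Minkowski plane $(P,F|_{P})$, gives $\int_{\overline{M}}|\omega_{0}|=\int_{\overline{Gr_{1}(P)}}\#(\overline{xy}\cap l)\,|\hat\omega_{0}|=4\,\mathbf{L}(\overline{xy})$. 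Hence $GT(f\,\Omega_{0}\wedge dr)$ and $\omega_{0}$ have proportional (and, once the normalization of $f$ is fixed, equal) mass over every test surface $\overline{M}$ of this type.

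The remaining step, and the one I expect to be the real obstacle, is to upgrade this to the pointwise identity of $2$-forms. Letting $x,y$ range inside a fixed $2$-plane $P$ yields enough surfaces $\overline{M}$ to determine any $2$-form on the surface $\overline{Gr_{1}(P)}$ (this is precisely uniqueness of the planar Crofton measure, equivalently injectivity of the cosine transform on even functions), so $GT(f\,\Omega_{0}\wedge dr)$ and $\omega_{0}$ would have the same pullback to $\overline{Gr_{1}(P)}$ for every $2$-plane $P$, and one must still argue that a closed $2$-form on $\overline{Gr_{1}(\mathbb{R}^{n})}$ is recovered from these pullbacks. The most efficient way to finish, avoiding that descent, is a direct computation on $S\mathbb{R}^{n}$: pull back along the submersion $p$ of \eqref{eq:prodiag} (which is injective on forms), form the fibre product $S\mathbb{R}^{n}\times_{\overline{Gr_{1}(\mathbb{R}^{n})}}\mathcal{I}=\{(x,\bar\xi,H):l(x,\bar\xi)\subset H\}$, and use base change to get $p^{*}GT(f\,\Omega_{0}\wedge dr)=\tilde\pi_{1*}\tilde\pi_{2}^{*}(f\,\Omega_{0}\wedge dr)$, where the $\tilde\pi_{1}$-fibre over $(x,\bar\xi)$ is $\{\nu\in S^{n-1}:\langle\nu,\bar\xi\rangle=0\}$ with $r=\langle x,\nu\rangle$. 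Carrying out this (elementary but lengthy) fibre integration and identifying the result with $\mathrm{Hess}(F)\star dx\wedge d\bar\xi|_{S\mathbb{R}^{n}}$ gives $GT(f\,\Omega_{0}\wedge dr)=\omega_{0}$ by \eqref{eq:matri} and Theorem~\ref{thm:There-exists-a}.
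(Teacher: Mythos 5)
Your first half runs along the same track as the paper: compare the hyperplane Crofton formula coming from \eqref{eq:lengthfor} with the line Crofton formula of Proposition \ref{pro:The-Crofton-measure} inside a fixed $2$-plane, apply Theorem \ref{thm:fundamental} to the fibration \eqref{eq:double fibration}, and use injectivity of the cosine transform to conclude that $GT(f\Omega_{0}\wedge dr)$ and $\omega_{0}$ have the same pullback to $\overline{Gr_{1}(\Pi)}$ for every $2$-plane $\Pi$. The genuine gap is exactly the step you flag and then defer: passing from equality of these restrictions to equality of $2$-forms on the $(2n-2)$-dimensional manifold $\overline{Gr_{1}(\mathbb{R}^{n})}$. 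Knowing a $2$-form on every $2$-dimensional subspace $T_{l}\overline{Gr_{1}(\Pi)}\subset T_{l}\overline{Gr_{1}(\mathbb{R}^{n})}$ does not determine it: in the basis $\{\overline{e}_{i},\overline{\overline{e}}_{j}\}$ used in the paper, the values on pairs of two translation directions $(\overline{e}_{i},\overline{e}_{j})$, on mixed pairs $(\overline{e}_{i},\overline{\overline{e}}_{j})$ with $i\neq j$, and on pairs of two rotation directions are never captured by a single $\overline{Gr_{1}(\Pi)}$, and polarization over planes only yields symmetric combinations of the mixed values. The paper does real work precisely here: it shows both forms vanish on all such pairs --- for $GT(f\Omega_{0}\wedge dr)$ because $\pi_{2}$ restricted to the corresponding subfamilies of $\mathcal{I}$ fails to be a submersion, so the pulled-back top-degree density degenerates in those directions, and for $\omega_{0}$ by inspection --- and handles the remaining diagonal pairs $(\overline{e}_{i},\overline{\overline{e}}_{i})$ by choosing a plane $\Pi_{i}$ whose $\overline{Gr_{1}(\Pi_{i})}$ has exactly that tangent plane.

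Your proposed substitute --- pulling back along $p$, forming the fibre product, and computing the fibre integral over $\{\nu\in S^{n-1}:\langle\nu,\bar{\xi}\rangle=0\}$ to land on $\mathrm{Hess}(F)\star dx\wedge d\bar{\xi}$ --- is a legitimate alternative strategy in principle, but as written it is an unexecuted assertion, and it is not mere bookkeeping: the fibre integral produces an expression in $f$, and identifying it with $\mathrm{Hess}(F)\star dx\wedge d\bar{\xi}$ requires differentiating the cosine-transform representation of $F$ twice under the integral sign (together with checking that the contributions in the ``bad'' directions vanish, which is the same phenomenon the paper's non-submersion argument encodes). Until that computation is carried out, or the paper's basis-by-basis vanishing argument is supplied, the proof is incomplete; the normalization of $f$ versus the factor $\tfrac14$ in \eqref{eq:lengthfor} should also be fixed explicitly, since the claim is an equality rather than a proportionality.
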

\begin{proof}
For any $x,y$ in any $2$-plane $\Pi\subset\mathbb{R}^{n}$, we know
that the length of $\overline{xy}$,\begin{equation}
L(\overline{xy})=\frac{1}{4}\int_{l\in\overline{Gr_{1}(\Pi)}}\#(\overline{xy}\cap l)|\omega_{0}|_{\overline{Gr_{1}(\Pi)}}.\label{eq:lengthsym}\end{equation}
where $\overline{Gr_{1}(\Pi)}:=\left\{ l\in\overline{Gr_{1}(\mathbb{R}^{n})}:l\subset\Pi\right\} $.

Let $I=\left\{ l\in\overline{Gr_{1}(\Pi)}\subset\overline{Gr_{1}(\mathbb{R}^{n})}:\overline{xy}\cap l\neq\phi\right\} $
and $G_{H}:=\pi_{1}(\pi_{2}^{-1}(H))$ for $H\in\overline{Gr_{n-1}(\mathbb{R}^{n})}$.
By the fundamental theorem of Gelfand transform, \prettyref{thm:fundamental},\begin{equation}
\int_{H\in\overline{Gr_{n-1}(\mathbb{R}^{n})}}\#(I\cap G_{H})|f\Omega_{0}\wedge dr|=\int_{I}|GT(f\Omega_{0}\wedge dr)|.\end{equation}
 Therefore,\begin{equation}
\begin{array}{lll}
\int_{l\in\overline{Gr_{1}(\Pi)}}\#(\overline{xy}\cap l)|GT(f\Omega_{0}\wedge dr)| & = & \int_{I}|GT(f\Omega_{0}\wedge dr)|\\
 & = & \int_{H\in\overline{Gr_{n-1}(\mathbb{R}^{n})}}\#(I\cap G_{H})|f\Omega_{0}\wedge dr|\\
 & = & \int_{\xi\in S^{n-1}}|\langle\xi,\overrightarrow{xy}\rangle|f(\xi)\Omega_{0}\end{array}\end{equation}
since $\overline{Gr_{n-1}(\mathbb{R}^{n})}\cong S^{n-1}\times\mathbb{R}$.
By \prettyref{eq:lengthfor}and \prettyref{eq:lengthsym} we thus
obtain\begin{equation}
\int_{l\in\overline{Gr_{1}(\Pi)}}\#(\overline{xy}\cap l)|GT(f\Omega_{0}\wedge dr)|=\int_{l\in\overline{Gr_{1}(\Pi)}}\#(\overline{xy}\cap l)|\omega_{0}|,\end{equation}
 which implies $GT(f\Omega_{0}\wedge dr)|_{\overline{Gr_{1}(\Pi)}}=\omega_{0}|_{\overline{Gr_{1}(\Pi)}}$
for any plane $\Pi\subset\mathbb{R}^{n}$ by the injectivity of cosine
transform \prettyref{eq:cosine}.(In Chapter 3 of \cite{G} Groemer
shows by using condensed harmonic expansion and Parseval's equation,
that $\mathcal{C}(f_{1})=\mathcal{C}(f_{2})$ iff $f_{1}^{+}=f_{2}^{+}$,
where $f_{1}^{+}(v)=\frac{f_{1}(v)+f_{1}(-v)}{2}$ and similarly for
$f_{2}^{+}$, for any bounded integrable functions $f_{1}$ and $f_{2}$
on $S^{n-1}$.)

Now define a basis for $T_{l}\overline{Gr_{1}(\mathbb{R}^{n})}$,
the tangent space of $\overline{Gr_{1}(\mathbb{R}^{n})}$ at $l\in\overline{Gr_{1}(\mathbb{R}^{n})}$. 

Note that $\overline{Gr_{1}(\mathbb{R}^{n})}\overset{\psi}{\simeq}TS_{F}^{n-1}$
from \prettyref{sec:The-Symplectic-Structure}. Let $\left\{ e_{i}:i=1,\cdots,n\right\} $
be the basis for $\mathbb{R}^{n}$, and curve $\overline{\gamma}_{i}$
with $\overline{\gamma}_{i}(t)=l+te_{i}$ for $i=1,\cdots,n-1$, where
$l\in\overline{Gr_{1}(\mathbb{R}^{n})}$, and then define $\overline{e}_{i}:=\overline{\gamma}'_{i}(0)$
for $i=1,\cdots,n-1$. Let $l(x,\xi)$ be a line in $\overline{Gr_{1}(\mathbb{R}^{n})}$
passing through $x$ with direction $\xi$ and $r_{i}(t)(\xi)$ for
be the rotation about origin with the direction from $e_{n}$ towards
$e_{i}$ for time $t$, then let $v_{i}(t)$ be the parallel transport
from $\psi(l(x,\xi)$ along on $r_{i}(t)(\xi)$ on $S_{F}^{n-1}$,
and then define curves $\overline{\overline{\gamma}}_{i}(t)=\psi^{-1}(v_{i}(t))$
for $i=1,\cdots,n-1$, thus we can define $\overline{\overline{e}}_{i}:=\overline{\overline{\gamma}}'_{i}(0)$.
Then $\left\{ \overline{e}_{i},\overline{\overline{e}}_{j}:i,j=1,\cdots,n-1\right\} $
is a basis for $T_{l}\overline{Gr_{1}(\mathbb{R}^{n})}$. 

Here we have four cases to discuss.

First of all, one can obtain the fact \begin{equation}
GT(f\Omega_{0}\wedge dr)(\overline{e}_{i},\overline{\overline{e}}_{i})=\omega_{0}(\overline{e}_{i},\overline{\overline{e}}_{i})\end{equation}
 by choosing a plane $\Pi_{i}$ with the tangent space of $\overline{Gr_{1}(\Pi_{i})}$
spanned by $\overline{e}_{i}$ and $\overline{\overline{e}}_{i}$
for $i=1,\cdots,n-1$. 

On the other hand, in the double fibration \prettyref{eq:double fibration},
$\pi_{2}|_{\pi_{1}^{-1}(L_{ij})}$, in which $L_{ij}$ is be the lines
in $\overline{Gr_{1}(\mathbb{R}^{n})}$ obtained by translation along
$\overline{e}_{i}$ or $\overline{e}_{j}$ for $i,j=1,\cdots,n-1$,
is not a submersion from $\pi_{1}^{-1}(L_{ij})$ to $\overline{Gr_{n-1}(\mathbb{R}^{n})}$.
Precisely, choose $\tilde{e}_{i}$ and $\tilde{e}_{j}$ in $T_{(l,H)}\mathcal{I}$,
$l\subset H$ such that $d\pi_{1}(\tilde{e}_{i})=\overline{e}_{i}$
and $d\pi_{1}(\tilde{e}_{j})=\overline{e}_{j}$, moreover, $d\pi_{2}(\tilde{e}_{i})$
and $d\pi_{2}(\tilde{e}_{i})$ are linearly dependent in $T_{H}\overline{Gr_{n-1}(\mathbb{R}^{n})}$.
Therefore $\pi_{1*}\pi_{2}^{*}(f\Omega_{0}\wedge dr)_{l}(\overline{e}_{i},\overline{e}_{j})=\int_{\pi_{1}^{-1}(l)}\pi_{2}^{*}(f\Omega_{0}\wedge dr)(\overline{e}_{i},\overline{e}_{j})=0$
for $i,j=1,\cdots,n-1$, and obviously $\omega_{0}(\overline{e}_{i},\overline{e}_{j})=0$,
thus \begin{equation}
GT(f\Omega_{0}\wedge dr)(\overline{e}_{i},\overline{e}_{j})=\omega_{0}(\overline{e}_{i},\overline{e}_{j})=0\end{equation}
 for $i,j=1,\cdots,n-1$. 

For the case of $\overline{e}_{i}$ and $\overline{\overline{e}}_{j}$,
$i\neq j$, $i=1,\cdots,n-1$. Let $\bar{L}_{ij}$ be the lines in
$\overline{Gr_{1}(\mathbb{R}^{n})}$ obtained by translation along
$\overline{e}_{i}$ or rotation along $\overline{\overline{e}}_{j}$.
Again, $\pi_{2}|_{\bar{L}_{ij}}$ in \prettyref{eq:double fibration}
is not a submersion from $\pi_{1}^{-1}(\bar{L}_{ij})$ to $\overline{Gr_{n-1}(\mathbb{R}^{n})}$
either, and it also can be explained precisely as the above case,
therefore $\pi_{1*}\pi_{2}^{*}(f\Omega_{0}\wedge dr)(\overline{e}_{i},\overline{\overline{e}}_{j})=0$
for $i,j=1,\cdots,n-1$, and obviously $\omega_{0}(\overline{e}_{i},\overline{\overline{e}}_{j})=0$,
thus \begin{equation}
GT(f\Omega_{0}\wedge dr)(\overline{e}_{i},\overline{\overline{e}}_{j})=\omega_{0}(\overline{e}_{i},\overline{\overline{e}}_{j})=0\end{equation}
 for $i\neq j$, $i,j=1,\cdots,n-1$. 

Similarly for the last case of $\overline{\overline{e}}_{i}$ and
$\overline{\overline{e}}_{j}$, $i,j=1,\cdots,n-1$, \begin{equation}
GT(f\Omega_{0}\wedge dr)(\overline{\overline{e}}_{i},\overline{\overline{e}}_{j})=\omega_{0}(\overline{\overline{e}}_{i},\overline{\overline{e}}_{j})=0.\end{equation}

So we have $GT(f\Omega_{0}\wedge dr)=\omega_{0}$ on $\overline{Gr_{1}(\mathbb{R}^{n})}$.
\end{proof}
One can use the diagonal intersection map and Gelfand transform by
following \cite{AF2} to construct Crofton measure for the $k$-th
Holmes-Thompson volume. 

Let $\Omega_{n-1}:=f\Omega_{0}\wedge dr$ and define a map\begin{equation}
\begin{array}{c}
\pi:\overline{Gr_{n-1}(\mathbb{R}^{n})}^{k}\backslash\triangle_{k}\rightarrow\overline{Gr_{n-k}(\mathbb{R}^{n})}\\
\pi((H_{1},\cdots,H_{k}))=H_{1}\cap\cdots\cap H_{k},\end{array}\end{equation}
where $\triangle_{k}=\left\{ (H_{1},\cdots,H_{k}):dim(H_{1}\cap\cdots\cap H_{k})>n-k\right\} $
and then let $\Omega_{n-k}:=\pi_{*}\Omega_{n-1}^{k}$. 

Now consider the following double fibration,\begin{equation}
\overline{Gr_{1}(\mathbb{R}^{n})}\overset{\pi_{1,k}}{\leftarrow}\mathcal{I}_{k}\overset{\pi_{2,k}}{\rightarrow}\overline{Gr_{n-k}(\mathbb{R}^{n})},\label{eq:kdf}\end{equation}
 where $\mathcal{I}_{k}=\left\{ (l,S)\in\overline{Gr_{1}(\mathbb{R}^{n})}\times\overline{Gr_{n-k}(\mathbb{R}^{n})}:l\subset S\right\} $.
Then we have the following proposition about the Gelfand transform
on \prettyref{eq:kdf}
\begin{prop}
\label{prop:genearcase}$GT(\Omega_{n-k})=\omega_{0}^{k}$ for $1\leq k\leq n-1$.\end{prop}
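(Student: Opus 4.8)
The plan is to reduce Proposition \ref{prop:genearcase} to the case $k=1$, which is exactly Proposition \ref{prop:tran}, by an inductive argument that exploits the compatibility of the Gelfand transform with composition of the intersection maps. First I would set up the iterated double fibration: rather than intersecting $k$ hyperplanes all at once, one can intersect the first $k-1$ to obtain an element of $\overline{Gr_{n-k+1}(\mathbb{R}^{n})}$ and then intersect with one more hyperplane. Concretely, I would factor $\pi$ through $\overline{Gr_{n-k+1}(\mathbb{R}^{n})}\times\overline{Gr_{n-1}(\mathbb{R}^{n})}$ (off the appropriate diagonal), so that $\Omega_{n-k}=\sigma_{*}(\Omega_{n-k+1}\wedge\Omega_{n-1})$ for the intersection map $\sigma:\overline{Gr_{n-k+1}(\mathbb{R}^{n})}\times\overline{Gr_{n-1}(\mathbb{R}^{n})}\setminus\triangle\to\overline{Gr_{n-k}(\mathbb{R}^{n})}$. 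The inductive hypothesis supplies $GT_{k-1}(\Omega_{n-k+1})=\omega_{0}^{k-1}$ on the $(k-1)$-st double fibration \eqref{eq:kdf}.

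The core of the argument is then a ``Fubini for Gelfand transforms'' statement: composing the two double fibrations $\overline{Gr_{1}(\mathbb{R}^{n})}\leftarrow\mathcal{I}_{k-1}\to\overline{Gr_{n-k+1}(\mathbb{R}^{n})}$ and $\overline{Gr_{n-k+1}(\mathbb{R}^{n})}\leftarrow\mathcal{J}\to\overline{Gr_{n-k}(\mathbb{R}^{n})}$ (where the second records the incidence of a line in an $(n-k+1)$-plane cut out by one further hyperplane) yields the $k$-th double fibration up to a fiber over which the pushforward is computed. I would verify, using the projection formula $\pi_{1*}(\pi_{2}^{*}\alpha\wedge\beta)=\alpha\wedge\pi_{1*}\beta$ and the commutativity of the relevant incidence correspondences, that $GT_{k}(\sigma_{*}(\Omega_{n-k+1}\wedge\Omega_{n-1}))$ equals the product of the pushed-forward pieces, namely $GT_{k-1}(\Omega_{n-k+1})\wedge GT_{1}(\Omega_{n-1})=\omega_{0}^{k-1}\wedge\omega_{0}=\omega_{0}^{k}$. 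The identity $GT_{1}(\Omega_{n-1})=\omega_{0}$ is Proposition \ref{prop:tran}, and the passage from $k-1$ to $k$ uses only that both sides are bilinear alternating forms on $T_{l}\overline{Gr_{1}(\mathbb{R}^{n})}$, so it again suffices to check agreement on the basis $\{\overline{e}_{i},\overline{\overline{e}}_{j}\}$ exactly as in the proof of Proposition \ref{prop:tran}: on a pair spanning the tangent space of some $\overline{Gr_{1}(\Pi)}$ the restriction reduces to the planar case, and on the remaining pairs the relevant projection $\pi_{2,k}$ restricted to the translated/rotated family fails to be a submersion, forcing both $GT$ and $\omega_{0}^{k}$ to vanish there.

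The main obstacle I anticipate is the careful bookkeeping of the diagonals and the non-submersion loci: one must check that the composition of intersection maps is generically well-defined (the intersection of the $(n-k+1)$-plane with the extra hyperplane is generically $(n-k)$-dimensional), that the excluded sets $\triangle_{k}$ match up under the factorization, and — most delicately — that on the ``bad'' pairs of basis vectors the fiber-integral genuinely vanishes because $d\pi_{2,k}$ drops rank, not merely because the form looks degenerate. Establishing this rank statement requires describing $T\mathcal{I}_{k}$ explicitly and tracking how a translation of a line inside an $(n-k)$-plane, versus a rotation, moves the containing $(n-k)$-plane; this is the same phenomenon exploited in the $k=1$ proof but now with $k$ hyperplanes, and it is where the argument is least automatic. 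Once the vanishing is in hand, the remaining cases are handled by restricting to a suitable $2$-plane $\Pi$ and invoking the already-established planar identity together with the injectivity of the cosine transform, so the bulk of the new work is purely the incidence-geometry verification rather than any new analytic input.
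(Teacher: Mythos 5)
Your overall skeleton---peel off hyperplanes by factoring the intersection map, and reduce to Proposition \prettyref{prop:tran} via a Fubini-type statement for Gelfand transforms over composed incidence correspondences---is sound, and it is essentially an inductive rephrasing of what the paper does in one stroke: the paper introduces $\mathcal{H}=\left\{ (l,(H_{1},\dots,H_{k})):l\subset H_{1}\cap\cdots\cap H_{k}\right\}$, observes that $\tilde{\pi}_{2}$ is the $k$-fold product of $\pi_{2}$, applies Fubini over the product fiber $\left\{ H:l\supset\right.$ hmm, rather $\left\{ H:l\subset H\right\} ^{k}$ to get $\tilde{\pi}_{1*}\tilde{\pi}_{2}^{*}\Omega_{n-1}^{k}=\omega_{0}^{k}$, and then uses the commutativity (base change) of the triangle \prettyref{eq:trian} to convert this into $\pi_{1,k*}\pi_{2,k}^{*}\Omega_{n-k}=\omega_{0}^{k}$. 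Your iterated version $\Omega_{n-k}=\sigma_{*}(\Omega_{n-k+1}\times\Omega_{n-1})$ can be made to work by exactly the same two ingredients: the push-pull identity on the fiber-product square and Fubini over a fiber that splits as a product with the two wedge factors depending on disjoint sets of fiber variables. Note, however, that the ``projection formula'' as you state it, $\pi_{1*}(\pi_{2}^{*}\alpha\wedge\beta)=\alpha\wedge\pi_{1*}\beta$, is false: one may only pull out forms that are pulled back along the map being pushed forward; what you actually need is $\pi_{2,k}^{*}\pi_{*}=\tilde{\pi}_{*}\tilde{\pi}_{2}^{*}$ (base change) followed by the factorization of the fiber integral.

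The genuine gap is in how you propose to \emph{verify} the inductive step. For $k\geq2$ both $GT_{k}(\Omega_{n-k})$ and $\omega_{0}^{k}$ are $2k$-forms, not bilinear alternating forms on $T_{l}\overline{Gr_{1}(\mathbb{R}^{n})}$, so ``checking agreement on the basis pairs $(\overline{e}_{i},\overline{\overline{e}}_{j})$ exactly as in Proposition \prettyref{prop:tran}'' does not determine either side; contracting a $2k$-form with two vectors leaves a $(2k-2)$-form, and equality of all such contractions is what you are trying to prove, not a finite check. Worse, the planar reduction fails outright: $\overline{Gr_{1}(\Pi)}$ is $2$-dimensional for a $2$-plane $\Pi$, so every $2k$-form with $k\geq2$ restricts to zero there, and the length formula \prettyref{eq:lengthfor} together with the injectivity of the cosine transform only pins down the degree-$2$ object $\omega_{0}$; it carries no information about $\omega_{0}^{k}$. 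Likewise the non-submersion argument in Proposition \prettyref{prop:tran} yields vanishing of particular contractions of a $2$-form and does not generalize to an equality of $2k$-forms. So the pointwise/basis verification must be abandoned, and the entire weight of the proof must rest on the base-change-plus-Fubini computation (the paper's route), carried out either in one step over $\mathcal{H}$ or iteratively as in your factorization; with that substitution your argument goes through.
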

\begin{proof}
Let \begin{equation}
\mathcal{H}:=\left\{ (l,(H_{1},H_{2},\cdots,H_{k}))\in\overline{Gr_{1}(\mathbb{R}^{n})}\times\overline{Gr_{n-1}(\mathbb{R}^{n})}^{k}:l\subset H_{1}\cap\cdots\cap H_{k}\right\} \end{equation}
and consider the following diagram\begin{equation}
\begin{array}{lllll}
\overline{Gr_{1}(\mathbb{R}^{n})} & \overset{\pi_{1,k}}{\leftarrow} & \mathcal{I}_{k} & \overset{\pi_{2,k}}{\rightarrow} & \overline{Gr_{n-k}(\mathbb{R}^{n})}\\
 & \tilde{\pi}_{1}^{\nwarrow} & \uparrow\tilde{\pi} &  & \uparrow\pi\\
 &  & \mathcal{H} & \overset{\tilde{\pi}_{2}}{\rightarrow} & \overline{Gr_{n-1}(\mathbb{R}^{n})}^{k},\end{array}\label{eq:trian}\end{equation}
in which $\tilde{\pi}:\mathcal{H}\rightarrow\mathcal{I}_{k}$ is defined
by $\tilde{\pi}((l,(H_{1},H_{2},\cdots,H_{k})))=(l,H_{1}\cap H_{2}\cap\cdots\cap H_{k}))$. 

Note that \begin{equation}
\pi_{1*}\pi_{2}^{*}\Omega_{n-1}=\omega_{0},\label{eq:origel}\end{equation}
by Proposition \prettyref{prop:tran}. 

For the lower part of the diagram \prettyref{eq:trian}, \begin{equation}
\overline{Gr_{1}(\mathbb{R}^{n})}\overset{\tilde{\pi}_{1}}{\leftarrow}\mathcal{H}\overset{\tilde{\pi}_{2}}{\rightarrow}\overline{Gr_{1}(\mathbb{R}^{n})}^{k},\end{equation}
By manipulating the map $\tilde{\pi}_{2}=\underset{k}{\underbrace{\pi_{2}\times\cdots\times\pi_{2}}}$,
the product of $k$ copies of the map $\pi_{2}$, applying Fubini
theorem for \prettyref{eq:origel} and using the fact that $\tilde{\pi}_{1}\times\tilde{\pi}_{2}:\mathcal{H}\rightarrow\overline{Gr_{1}(\mathbb{R}^{n})}\times\overline{Gr_{1}(\mathbb{R}^{n})}^{k}$
is an immersion, one can infer $\tilde{\pi}_{1*}\tilde{\pi}_{2}^{*}\Omega_{n-1}^{k}=\omega_{0}^{k}$. 

Thus, by the commutativity of the diagram \prettyref{eq:trian} we
obtain $\pi_{1,k*}\pi_{2,k}^{*}\Omega_{n-k}=\omega_{0}^{k}$.
\end{proof}
In order to study the $k$-th Holmes-Thompson volume, one can restrict
on some $k+1$-dimensional flat subspace. So fix $S\in\overline{Gr_{k+1}(\mathbb{R}^{n})}$
and then define a map by intersection\begin{equation}
\begin{array}{c}
\pi_{S}:\overline{Gr_{n-k}(\mathbb{R}^{n})}\setminus\triangle(S)\rightarrow\overline{Gr_{1}(S)}\\
\pi_{S}(H^{n-k})=H^{n-k}\cap S\end{array}\label{eq:interma}\end{equation}
for $H^{n-k}\in\overline{Gr_{n-k}(\mathbb{R}^{n})}\setminus\triangle(S)$,
where \begin{equation}
\triangle(S):=\left\{ H^{n-k}\in\overline{Gr_{n-k}(\mathbb{R}^{n})}:dim(H^{n-k}\cap S)>0\right\} .\end{equation}
Then we have the following proposition 
\begin{prop}
\label{pro:interse}$(\pi_{S})_{*}\Omega_{n-k}=\omega_{0}^{k}|_{\overline{Gr_{1}(S)}}$,
for $1\leq k\leq n-1$.\end{prop}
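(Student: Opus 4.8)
The plan is to factor the intersection map $\pi_{S}$ of \prettyref{eq:interma} through the double fibration \prettyref{eq:kdf} and then invoke \prettyref{prop:genearcase}. First I would restrict \prettyref{eq:kdf} over $\overline{Gr_{1}(S)}$ by setting
\[
\mathcal{I}_{k}|_{S}:=\left\{ (l,H)\in\mathcal{I}_{k}:l\subset S,\ \dim(H\cap S)=1\right\}.
\]
For $H\in\overline{Gr_{n-k}(\mathbb{R}^{n})}\setminus\triangle(S)$ the intersection $H\cap S$ is a single line, and it lies in $S$, so $\pi_{2,k}$ maps $\mathcal{I}_{k}|_{S}$ bijectively onto $\overline{Gr_{n-k}(\mathbb{R}^{n})}\setminus\triangle(S)$ with smooth inverse $H\mapsto(H\cap S,H)$; under this identification $\pi_{1,k}|_{\mathcal{I}_{k}|_{S}}$ becomes exactly the map $\pi_{S}$. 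Since pushforward is functorial and the pushforward along a diffeomorphism is the pullback along its inverse, this gives
\[
(\pi_{S})_{*}\Omega_{n-k}=(\pi_{1,k}|_{\mathcal{I}_{k}|_{S}})_{*}\,(\pi_{2,k}|_{\mathcal{I}_{k}|_{S}})^{*}\Omega_{n-k}.
\]

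Next I would compare the right-hand side with the full Gelfand transform $GT(\Omega_{n-k})=\pi_{1,k*}\pi_{2,k}^{*}\Omega_{n-k}$ on all of $\overline{Gr_{1}(\mathbb{R}^{n})}$. Fix a line $l\in\overline{Gr_{1}(S)}$. The set $\pi_{1,k}^{-1}(l)\cap\mathcal{I}_{k}|_{S}$, consisting of those $(n-k)$-planes through $l$ that meet $S$ in $l$ alone, is an open, dense, full-measure subset of the whole fiber $\pi_{1,k}^{-1}(l)=\{H:l\subset H\}$ and has the same dimension $k(n-k-1)$ as that fiber (which matches $\deg\Omega_{n-k}-2k$, so the fiber integrals land in degree $2k$, the dimension of $\overline{Gr_{1}(S)}$). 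Given tangent vectors $v_{1},\dots,v_{2k}\in T_{l}\overline{Gr_{1}(S)}\subset T_{l}\overline{Gr_{1}(\mathbb{R}^{n})}$, I would lift them to vectors tangent to the submanifold $\mathcal{I}_{k}|_{S}\subset\mathcal{I}_{k}$; such lifts are admissible for both the restricted and the full pushforward, so the two fiber-integration formulas integrate the very same contracted form over the two fibers above, which differ only by a negligible set. Hence $(\pi_{1,k}|_{\mathcal{I}_{k}|_{S}})_{*}(\pi_{2,k}|_{\mathcal{I}_{k}|_{S}})^{*}\Omega_{n-k}$ is the restriction of $GT(\Omega_{n-k})$ to $\overline{Gr_{1}(S)}$, and by \prettyref{prop:genearcase} this equals $\omega_{0}^{k}|_{\overline{Gr_{1}(S)}}$, which is the desired identity.

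I expect the main obstacle to be the transversality bookkeeping behind the first step: one must check that $\pi_{2,k}$ genuinely restricts to a diffeomorphism off $\triangle(S)$, equivalently that $\pi_{S}$ is a submersion there, that $\triangle(S)$ and the locus it cuts out inside each fiber are negligible for $\Omega_{n-k}$, and that all the fiber integrals in sight converge. These are localizations to the fixed subspace $S$ of facts already implicit in the construction of $\Omega_{n-k}$ (via $\triangle_{k}$) and in the proof of \prettyref{prop:genearcase}, so once they are recorded carefully the present proposition follows.
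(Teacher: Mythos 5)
Your proposal is correct and follows essentially the same route as the paper: both identify $(\pi_{S})_{*}\Omega_{n-k}$ with the restriction to $\overline{Gr_{1}(S)}$ of the Gelfand transform $\pi_{1,k*}\pi_{2,k}^{*}\Omega_{n-k}=\omega_{0}^{k}$ from \prettyref{prop:genearcase}, by viewing $\pi_{S}$ through the double fibration \prettyref{eq:kdf} restricted over $\overline{Gr_{1}(S)}$. Your write-up merely makes explicit what the paper compresses into ``by the definition of the intersection map'' (the diffeomorphism $H\mapsto(H\cap S,H)$, the negligible locus where $\dim(H\cap S)\neq1$, and the independence of the fiber integral from the choice of lifts), so it is a more detailed version of the same argument rather than a different one.
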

\begin{proof}
From Proposition \prettyref{prop:genearcase}, we know that $\pi_{1,k*}\pi_{2,k}^{*}\Omega_{n-k}=\omega_{0}^{k}$
for the double fibration $\overline{Gr_{n-k}(\mathbb{R}^{n})}\overset{\pi_{2,k}}{\leftarrow}\mathcal{I}_{k}\overset{\pi_{1,k}}{\rightarrow}\overline{Gr_{1}(\mathbb{R}^{n})}$. 

Therefore, one can obtain by the definition of the intersection map
\prettyref{eq:interma} \begin{equation}
(\pi_{S})_{*}\Omega_{n-k}=\pi_{1,k*}\pi_{2,k}^{*}\Omega_{n-k}|_{\overline{Gr_{1}(S)}}=\omega_{0}^{k}|_{\overline{Gr_{1}(S)}}.\end{equation}

\end{proof}
Finally, one can obtain the following theorem about Holmes-Thompson
volumes.
\begin{thm}
\label{thm:resu}(Alvarez) Suppose $N$ is a $k$-dimensional submanifold
in $(\mathbb{R}^{n},F)$. Then $vol_{k}(N)=\frac{1}{2\epsilon_{k}}\int_{P\in\overline{Gr_{n-k}(\mathbb{R}^{n})}}\#(N\cap P)|\Omega_{n-k}|$
for $1\leq k\leq n-1$.\end{thm}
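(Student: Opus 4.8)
The plan is to bootstrap from the hypersurface formula of Proposition \ref{pro:surfa} and then transport it through the intersection map of Proposition \ref{pro:interse}. Exactly as in the proof of Proposition \ref{pro:surfa}, I would first reduce to the case where $N$ is a compact convex piece of an affine $k$-plane $\Lambda$: both sides of the claimed identity are additive under decomposing $N$ into patches --- the left side because $vol_k(N)=\frac{1}{\epsilon_k}\int_{D^{*}N}|\omega^k|$ is an integral over the codisc bundle, the right side because $P\mapsto\#(N\cap P)$ is additive in $N$ --- and both are continuous under $C^1$-approximation of $N$ by piecewise-flat submanifolds, so the general case follows once we treat affine pieces.

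Assume then $N\subset\Lambda$ with $\Lambda\in\overline{Gr_{k}(\mathbb{R}^n)}$, and fix a flat $S\in\overline{Gr_{k+1}(\mathbb{R}^n)}$ containing $\Lambda$. Viewing $S$ as a $(k+1)$-dimensional Minkowski space with the restricted norm $F|_{S}$, the set $N$ is a hypersurface in $S$, so Proposition \ref{pro:surfa} applied inside $S$ gives
\begin{equation}
vol_k(N)=\frac{1}{2\epsilon_k}\int_{l\in\overline{Gr_1(S)}}\#(N\cap l)\,|\hat{\omega}_{0}^{k}|,
\end{equation}
where $\hat{\omega}_{0}$ is the symplectic form on $\overline{Gr_1(S)}$ constructed as in Section \ref{sec:The-Symplectic-Structure}. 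By the compatibility lemma $i^{*}\omega_{0}=\hat{\omega}_{0}$ for $i:\overline{Gr_1(S)}\hookrightarrow\overline{Gr_1(\mathbb{R}^n)}$, we may write $\hat{\omega}_{0}^{k}=\omega_{0}^{k}|_{\overline{Gr_1(S)}}$, and then Proposition \ref{pro:interse} identifies this with $(\pi_{S})_{*}\Omega_{n-k}$. Hence
\begin{equation}
vol_k(N)=\frac{1}{2\epsilon_k}\int_{l\in\overline{Gr_1(S)}}\#(N\cap l)\,|(\pi_{S})_{*}\Omega_{n-k}|.
\end{equation}

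The last step is a change of variables along $\pi_{S}:\overline{Gr_{n-k}(\mathbb{R}^n)}\setminus\triangle(S)\to\overline{Gr_1(S)}$. For a generic $(n-k)$-plane $P$ the intersection $P\cap S$ is the line $l=\pi_{S}(P)$, and because $N\subset S$ we have $N\cap P=N\cap l$, so $\#(N\cap P)=\#(N\cap\pi_{S}(P))$; the exceptional set $\triangle(S)$ on whose complement $\pi_{S}$ is defined is $|\Omega_{n-k}|$-null and contributes nothing. Integrating over the fibres of $\pi_{S}$ and using that the pushforward of the density $|\Omega_{n-k}|$ coincides with the density $|(\pi_{S})_{*}\Omega_{n-k}|$, I would conclude
\begin{equation}
\int_{l\in\overline{Gr_1(S)}}\#(N\cap l)\,|(\pi_{S})_{*}\Omega_{n-k}|=\int_{P\in\overline{Gr_{n-k}(\mathbb{R}^n)}}\#(N\cap P)\,|\Omega_{n-k}|,
\end{equation}
which together with the previous display is the assertion. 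The point I expect to demand the most care is precisely this last identity: justifying that $(\pi_{S})_{*}|\Omega_{n-k}|=|(\pi_{S})_{*}\Omega_{n-k}|$ --- that is, that the sign of $\Omega_{n-k}$ does not oscillate along the fibres of $\pi_{S}$ in a way that produces cancellation --- together with making the reduction to affine $N$ fully rigorous (uniform bounds on the intersection numbers and continuity of both sides under approximation).
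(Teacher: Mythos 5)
Your proposal follows essentially the same route as the paper: reduce to $N$ contained in a $(k+1)$-flat $S$, apply Proposition \ref{pro:surfa} inside $S$ to get $vol_k(N)=\frac{1}{2\epsilon_k}\int_{\overline{Gr_1(S)}}\#(N\cap l)|\omega_0^k|$, identify $\omega_0^k|_{\overline{Gr_1(S)}}$ with $(\pi_S)_*\Omega_{n-k}$ via Proposition \ref{pro:interse}, and change variables along $\pi_S$ using $N\cap P=N\cap\pi_S(P)$. You in fact make explicit several points the paper leaves implicit (the additivity/approximation reduction, the compatibility lemma $i^*\omega_0=\hat{\omega}_0$, and the identification of $(\pi_S)_*|\Omega_{n-k}|$ with $|(\pi_S)_*\Omega_{n-k}|$), so the argument is correct and in line with the paper's proof.
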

\begin{proof}
By Proposition \prettyref{pro:surfa}, the claim is true for hypersurface
case.

It is sufficient to show the claim for the case when $N\subset S$
for some $S\in\overline{Gr_{k+1}(\mathbb{R}^{n})}$. We obtain by
Proposition \prettyref{pro:surfa} and Proposition \prettyref{pro:interse},\begin{equation}
\begin{array}{lll}
vol_{k}(N) & = & \frac{1}{2\epsilon_{k}}\int_{l\in\overline{Gr_{1}(S)}}\#(N\cap l)|\omega_{0}^{k}|\\
 & = & \frac{1}{2\epsilon_{k}}\int_{l\in\overline{Gr_{1}(S)}}\#(N\cap l)|(\pi_{S})_{*}\Omega_{n-k}|\\
 & = & \frac{1}{2\epsilon_{k}}\int_{P\in\overline{Gr_{n-k}(\mathbb{R}^{n})}}\#(N\cap P)|\Omega_{n-k}|.\end{array}\end{equation}
as desired.
\end{proof}

\section{Length and Related}

The classic Crofton formula is \begin{equation}
Length(\gamma)=\frac{1}{4}\int_{0}^{\infty}\int_{0}^{2\pi}\#(\gamma\cap l(r,\theta))d\theta dr\end{equation}
for any rectifiable curve in Euclidean plane, where $\theta$ is the
angle of the normal of the oriented line $l$ to the $x$-axis and
$r$ is its distance to the origin. Let us denote the affine $1$-Grassmannians
(lines) in $\mathbb{R}^{2}$ by $\overline{Gr_{1}(\mathbb{R}^{2})}$
.

As for Minkowski plane, it is a normed two dimensional space with
a norm $F(\cdot)=||\cdot||$ , in which the unit disk is convex and
$F$ has some smoothness.

Two of the key tools used to obtain the Crofton formula for Minkowski
plane are the cosine transform and Gelfand transform. Let us explain
them one by one first and see their connection next. A fact from spherical
harmonics about cosine transform is there is some even function on
$S^{1}$ such that \begin{equation}
F(\cdot)=\frac{1}{4}\int_{S^{1}}|\langle\xi,\cdot\rangle|g(\xi)d\xi,\label{eq:exist}\end{equation}
if $F$ is an even $C^{4}$ function on $S^{1}$. A good reference
for this is \cite{G}. As for Gelfand transform, it is the transform
of differential forms and densities on double fibrations, for instance,
$\mathbf{\mathbb{R}}^{2}\stackrel{\pi_{1}}{\leftarrow}\mathcal{I}\stackrel{\pi_{2}}{\rightarrow}\overline{Gr_{1}(\mathbf{\mathbb{R}^{2})}}$,
where $\mathcal{I}:=\left\{ (x,l)\in\mathbf{\mathbb{R}}^{2}\times\overline{Gr_{1}(\mathbf{\mathbb{R}^{2})}}:x\in l\right\} $
is the incidence relations and $\pi_{1}$ and $\pi_{2}$ are projections.
A formula one can take as an example of the fundamental theorem of
Gelfand transform is the following

\begin{equation}
\int_{\gamma}\pi_{1*}\pi_{2}^{*}|\Omega|=\int_{l\in\overline{Gr_{1}(\mathbb{R}^{2})}}\#(\gamma\cap l)|\Omega|,\label{eq:gl}\end{equation}
where $\Omega:=g(\theta)d\theta\wedge dr$. But we give a direct proof
here.
\begin{proof}
First, consider the case of $\Omega=d\theta\wedge dr$. For any $v\in T_{x}\gamma$,
since there is some $v'\in T_{x'}\mathcal{I}$, such that $\pi_{1*}(v')=v$,
then

\begin{equation}
\begin{array}{lll}
(\pi_{1*}\pi_{2}^{*}|\Omega|)_{x}(v) & = & (\int_{\pi_{1}^{-1}(x)}\pi_{2}^{*}|\Omega|)_{x}(v)\\
 & = & \int_{x'\in\pi_{1}^{-1}(x)}(\pi_{2}^{*}|\Omega|)_{x'}(v')\\
 & = & \int_{S^{1}}(\pi_{2}^{*}|d\theta\wedge dr|)(v')\\
 & = & \int_{S^{1}}|dr(\pi_{2*}(v'))|d\theta\\
 & = & \int_{S^{1}}|\langle v,\theta\rangle|d\theta\\
 & = & 4|v|.\end{array}\end{equation}
So $\int_{\gamma}\pi_{1*}\pi_{2}^{*}|\Omega|=4Length(\gamma)=\int_{l\in\overline{Gr_{1}(\mathbb{R}^{2})}}\#(\gamma\cap l)|\Omega|$
by the classic Crofton formula.

When $\Omega=f(\theta)d\theta\wedge dr$, we just need to replace
$d\theta$ by $g(\theta)d\theta$ in the equalities in the first case. 
\end{proof}
Moreover, from the above proof and \prettyref{eq:exist}, for any
curve $\gamma(t):[a,b]\rightarrow\mathbb{R}^{2}$ differentiable almost
everywhere in the Minkowski space, \begin{equation}
\int_{\gamma}\pi_{1*}\pi_{2}^{*}|\Omega|=\int_{a}^{b}(\pi_{1*}\pi_{2}^{*}|\Omega|)(\gamma'(t))dt=\int_{a}^{b}4F(\gamma'(t))dt=4Length(\gamma),\end{equation}
so then by \prettyref{eq:gl} we know \begin{equation}
Length(\gamma)=\frac{1}{4}\int_{l\in\overline{Gr_{1}(\mathbb{R}^{2})}}\#(\gamma\cap l)|g(\theta)d\theta\wedge dr|\label{eq:leng}\end{equation}
for Minkowski plane.

The Holmes-Thompson Area $HT^{2}(U)$ of a measurable set $U$ in
a Minkowski plane is defined as $HT^{2}(U):=\frac{1}{\pi}\int_{D^{*}U}|\omega_{0}|^{2}$,
where $\omega_{0}$ is the natural symplectic form on the cotangent
bundle of $\mathbb{R}^{2}$ and $D^{*}U:=\left\{ (x,\xi)\in T^{*}\mathbb{R}^{2}:F^{*}(\xi)\leq1\right\} $.
To study it from the perspective of integral geometry, we need to
introduce a symplectic form $\omega$ on the space of affine lines
$\overline{Gr_{1}(\mathbf{\mathbb{R}^{2})}}$, that one can see \cite{Helbert}.

\section{HT Area and Related}

Now let's see the Crofton formula for Minkowski plane, which is $Length(\gamma)=\frac{1}{4}\int_{\overline{Gr_{1}(\mathbb{R}^{2})}}\#(\gamma\cap l)|\omega|$.
To prove this, it is sufficient to show that it holds for for any
straight line segment \begin{equation}
L:[0,||p_{2}-p_{2}||]\rightarrow\mathbb{R}^{2},\, L(t)=p_{1}+\frac{p_{2}-p_{1}}{||p_{2}-p_{1}||}t,\end{equation}
starting at $p_{1}$ and ending at $p_{2}$ in $\mathbb{R}^{2}$.
First, using the diffeomorphism between the circle bundle and co-circle
bundle, which is \begin{equation}
\begin{array}{c}
\varphi_{F}:S\mathbb{R}^{2}\rightarrow S^{*}\mathbb{R}^{2}\\
\varphi_{F}(x,\xi)=(x,dF_{\xi}),\end{array}\end{equation}
we can obtain a fact that\begin{equation}
\begin{array}{lll}
\int_{L\times\left\{ \frac{p_{2}-p_{1}}{||p_{2}-p_{1}||}\right\} }\varphi_{F}^{*}\alpha_{0} & = & \int_{\varphi_{F}(L\times\left\{ \frac{p_{2}-p_{1}}{||p_{2}-p_{1}||}\right\} )}\alpha_{0}\\
 & = & \int_{0}^{||p_{2}-p_{1}||}\alpha_{0dF_{\frac{p_{2}-p_{1}}{||p_{2}-p_{1}||}}}((\frac{p_{2}-p_{1}}{||p_{2}-p_{1}||},0))dt\\
 & = & \int_{0}^{||p_{2}-p_{1}||}dF_{\frac{p_{2}-p_{1}}{||p_{2}-p_{1}||}}(\frac{p_{2}-p_{1}}{||p_{2}-p_{1}||})dt,\end{array}\end{equation}
where $\alpha_{0}$ is the tautological one-form, precisely $\alpha_{0\xi}(X):=\xi(\pi_{0*}X)$
for any $X\in T_{\xi}T^{*}\mathbb{R}^{2}$, and $d\alpha_{0}=\omega_{0}$.
Applying the the basic equality that $dF_{\xi}(\xi)=1$, which is
derived from the positive homogeneity of $F$, for all $\xi\in S\mathbb{R}^{2}$,
the above quantity becomes $\int_{0}^{||p_{2}-p_{1}||}1dt$, which
equals to $||p_{2}-p_{1}||$.

Let $R:=\left\{ \xi_{x}\in S^{*}\mathbb{R}^{2}:x\in\overline{p_{1}p_{2}}\right\} $
and $T=\left\{ l\in\overline{Gr_{1}(\mathbf{\mathbb{R}^{2})}}:l\cap\overline{p_{1}p_{2}}\neq\textrm{Ø}\right\} $,
and $p'$ is the projection (composition) from $S^{*}\mathbb{R}^{2}$
to $\overline{Gr_{1}(\mathbf{\mathbb{R}^{2})}}$.

Apply the above fact and $p'^{*}\omega=\omega_{0}$,\begin{equation}
\begin{array}{lllllll}
\int_{T}|\omega| & = & \int_{p'(R)}|\omega| & = & \int_{R}|p'^{*}\omega| & = & \int_{R}|\omega_{0}|\\
 &  &  &  &  & = & |\int_{R^{+}}\omega_{0}|+|\int_{R^{-}}\omega_{0}|\\
 &  &  &  &  & = & |\int_{\partial R^{+}}\alpha_{0}|+|\int_{\partial R^{-}}\alpha_{0}|\\
 &  &  &  &  & = & 4||p_{2}-p_{1}||.\end{array}\end{equation}
Thus we have shown the Crofton formula for Minkowski plane.

Furthermore, combining with \prettyref{eq:leng}, we have \begin{equation}
\frac{1}{4}\int_{l\in\overline{Gr_{1}(\mathbb{R}^{2})}}\#(\gamma\cap l)|\Omega|=\frac{1}{4}\int_{\overline{Gr_{1}(\mathbb{R}^{2})}}\#(\gamma\cap l)|\omega|,\end{equation}
where $\Omega=g(\theta)d\theta\wedge dr$. Then, by the injectivity
of cosine transform in \cite{G}, $|\Omega|=|\omega|$.

To obtain the HT area, one can define a map

\begin{equation}
\begin{array}{c}
\pi:\overline{Gr_{1}(\mathbb{R}^{2})}\times\overline{Gr_{1}(\mathbb{R}^{2})}\setminus\tilde{\triangle}\rightarrow\mathbb{R}^{2}\\
\pi(l,l')=l\cap l',\end{array}\end{equation}
where $\tilde{\triangle}:=\left\{ (l,l'):l\parallel l'\mbox{\, or\,}l=l'\right\} $,
extended from Alvarez's construction of taking intersections, \cite{AF}.
The following theorem can be obtained.
\begin{thm}
$HT^{2}(U)=\frac{1}{2\pi}\int_{x\in\mathbb{R}^{2}}\chi(x\cap U)|\pi_{*}\Omega^{2}|$
for any bounded measurable subset $U$ of a Minkowski plane.\end{thm}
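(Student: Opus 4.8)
The plan is to reduce the two-dimensional area formula to the one-dimensional length formula already established, exactly in the spirit of the proof of Proposition~\ref{pro:surfa} and Theorem~\ref{thm:resu}, but now working directly on the plane. First I would note that by the definition $HT^{2}(U)=\frac{1}{\pi}\int_{D^{*}U}|\omega_{0}|^{2}$ and the fact that $D^{*}U$ fibres over $U$ with fibre the dual unit disc, it suffices to treat the case where $U$ is convex with smooth boundary; the general measurable case follows by the usual approximation and the additivity of both sides. Applying Stokes' theorem to $\int_{D^{*}U}\omega_{0}^{2}=\int_{\partial(D^{*}U)}\alpha_{0}\wedge\omega_{0}$ and discarding the piece over $\partial U$ (whose integrand has too high a degree in the base directions, as in \eqref{eq:longeq}), one is left with an integral over the cosphere bundle $S^{*}U$, which via $\varphi_{F}$ and the projection $p'$ to $\overline{Gr_{1}(\mathbb{R}^{2})}$ converts into a Crofton-type integral $\frac{1}{2\pi}\int_{\overline{Gr_{1}(\mathbb{R}^{2})}}\#(\partial U\cap l)\,\bigl(\text{something in }\omega\bigr)$ over the \emph{boundary curve}; the factor $\tfrac12$ comes from passing from oriented to unoriented lines as in Proposition~\ref{pro:surfa}.

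Next I would introduce the intersection map $\pi(l,l')=l\cap l'$ on $\overline{Gr_{1}(\mathbb{R}^{2})}^{2}\setminus\tilde\triangle$ and the pushed-forward density $\pi_{*}\Omega^{2}$ (equivalently $\pi_{*}\omega^{2}$, since $|\Omega|=|\omega|$ on $\overline{Gr_{1}(\mathbb{R}^{2})}$ by the injectivity of the cosine transform proved just above). The key identity to establish is that $\pi_{*}\Omega^{2}$, as a density on $\mathbb{R}^{2}$, agrees up to the correct normalising constant with the Holmes--Thompson area density $|\omega_{0}|^{2}$ on the plane. This is the two-dimensional, $k=2$ analogue of Propositions~\ref{prop:tran} and~\ref{pro:interse}: one checks the equality of the two densities by testing against a basis $\{\overline e_{1},\overline e_{2}\}$ of $T_{x}\mathbb{R}^{2}$ obtained by translating, and computing $\pi_{1*}\pi_{2}^{*}\Omega^{2}$ on such pairs through a Fubini argument on the double fibration $\mathbb{R}^{2}\leftarrow\{(x,(l,l')):x\in l\cap l'\}\rightarrow\overline{Gr_{1}(\mathbb{R}^{2})}^{2}$, using the immersivity of the natural map into the product.

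Once the density identity $\pi_{*}\Omega^{2}=2\,|\omega_{0}|^{2}$ (up to the precise constant dictated by matching $\epsilon_{1},\epsilon_{2}=\pi$) is in hand, the theorem follows by integrating the indicator of $U$: for fixed $x$, $\chi(x\cap U)$ is $1$ or $0$ according to whether $x\in U$, so $\int_{x\in\mathbb{R}^{2}}\chi(x\cap U)|\pi_{*}\Omega^{2}|=\int_{U}|\pi_{*}\Omega^{2}|=2\int_{U}|\omega_{0}|^{2}=2\pi\,HT^{2}(U)$, which rearranges to the stated formula. The general measurable $U$ is then handled by monotone/dominated convergence, since both sides are countably additive set functions agreeing on convex bodies.

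I expect the main obstacle to be the identity $\pi_{*}\Omega^{2}=2|\omega_{0}|^{2}$ on $\mathbb{R}^{2}$, i.e.\ correctly tracking the combinatorial factor of $2$ (from the two transverse lines through a generic point, or equivalently from the symmetry $\tilde\triangle$ quotient) together with the normalising constants $\epsilon_{1}$, $\epsilon_{2}=\pi$, and verifying that the intersection map $\pi$ is a submersion off $\tilde\triangle$ so that the pushforward is genuinely a well-defined density rather than a distributional object. The Stokes/degree-counting step and the reduction to convex $U$ are routine given the techniques already deployed for Propositions~\ref{pro:surfa} and~\ref{prop:genearcase}; it is the bookkeeping in the Fubini computation on $\overline{Gr_{1}(\mathbb{R}^{2})}^{2}$ that requires care.
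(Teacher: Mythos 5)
There is a genuine gap, and you have in fact flagged it yourself: the entire quantitative content of the theorem is the identity $\pi_{*}\Omega^{2}=2\pi\cdot(\text{HT area density})$ (what you write, with a type mismatch, as $\pi_{*}\Omega^{2}=2|\omega_{0}|^{2}$ \emph{on} $\mathbb{R}^{2}$ --- note $\omega_{0}^{2}$ lives on $T^{*}\mathbb{R}^{2}$ and only its fibrewise integral over the codisc is a density on the plane), and your proposal never establishes it. The method you suggest for it --- testing against a basis $\{\overline{e}_{1},\overline{e}_{2}\}$ in the style of Proposition \ref{prop:tran} --- does not transfer: there one compares two $2$-forms on $\overline{Gr_{1}(\mathbb{R}^{n})}$ and the degenerate pairings vanish for structural reasons, whereas here both objects are densities on $\mathbb{R}^{2}$, i.e.\ a single positive function times Lebesgue measure, so ``testing on basis pairs'' collapses to computing exactly the one constant (the factor $2$ together with $\epsilon_{2}=\pi$) that you list as the ``main obstacle.'' The paper closes precisely this gap without any pointwise density computation: it writes $\int_{U}\pi_{*}\Omega^{2}=\int_{\mathcal{T}_{U}}\omega^{2}$ with $\mathcal{T}_{U}$ the pairs of lines meeting inside $U$, pulls back under $p'\times p'$ to the fibred square $\mathbb{T}^{*}U$ of the cosphere bundle (minus the diagonal), uses $(p')^{*}\omega=\omega_{0}$ and $|\Omega|=|\omega|$, and then applies Stokes' theorem a second time to land on $2\int_{S^{*}U}\alpha_{0}\wedge\omega_{0}$; combined with the first Stokes step $\int_{D^{*}U}\omega_{0}^{2}=\int_{S^{*}U}\alpha_{0}\wedge\omega_{0}$ this produces the factor $\tfrac{1}{2\pi}$ with no separate constant-matching argument. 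Some version of this second global step (or an honest local computation of the constant) is what your outline is missing.

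A secondary error: after the first Stokes step you claim $\frac{1}{\pi}\int_{S^{*}U}\alpha_{0}\wedge\omega_{0}$ ``converts into a Crofton-type integral $\frac{1}{2\pi}\int\#(\partial U\cap l)(\cdots)$ over the boundary curve.'' That is not correct: a Crofton integral weighted by $\#(\partial U\cap l)$ against $|\omega|$ computes the Holmes--Thompson length of $\partial U$ (this is Proposition \ref{pro:The-Crofton-measure} applied to the closed curve $\partial U$), not the area of $U$; pushing $\alpha_{0}\wedge\omega_{0}$ forward along $p'$ would instead produce a chord-length weight $\mathbf{L}(l\cap U)$. Since you do not use this intermediate claim later, it does not break the rest of the outline, but as stated it is false and should be removed. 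The remaining ingredients --- Stokes with the degree-counting discard of the $\partial U$ piece, the use of $|\Omega|=|\omega|$ from the injectivity of the cosine transform, and the final integration of $\chi(x\cap U)$ with an approximation argument for general measurable $U$ --- are fine and agree with the paper.
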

\begin{proof}
On one hand, \begin{equation}
\frac{1}{\pi}\int_{D^{*}U}\omega_{0}^{2}=\frac{1}{\pi}\int_{\partial D^{*}U}\omega_{0}^{2}=\frac{1}{\pi}\int_{S^{*}U}\alpha_{0}\wedge\omega_{0}.\label{eq:e1}\end{equation}
On the other hand, let $\mathcal{T}_{U}:=\left\{ ((l,l')\in\overline{Gr_{1}(\mathbb{R}^{2})}\times\overline{Gr_{1}(\mathbb{R}^{2})}:l\cap l'\in U\right\} $$,$\begin{equation}
\frac{1}{\pi}\int_{x\in\mathbb{R}^{2}}\chi(x\cap U)\pi_{*}\Omega^{2}=\frac{1}{\pi}\int_{U}\pi_{*}\omega^{2}=\frac{1}{\pi}\int_{\mathcal{T}_{U}}\omega^{2}.\label{eq:e2}\end{equation}
Let $\mathbb{T}^{*}U:=\left\{ (\xi_{x},\xi_{x}'):\xi_{x},\xi_{x}'\in S_{x}^{*}U\right\} $,
then \begin{equation}
(p'\times p')^{-1}(\mathcal{T}_{U})=\mathbb{T}^{*}U\setminus\left\{ (\xi_{x},\xi_{x}):\xi_{x}\in S_{x}^{*}U\right\} .\end{equation}
Therefore\begin{equation}
\begin{array}{lll}
\frac{1}{\pi}\int_{\mathcal{T}_{U}}\omega^{2} & = & \frac{1}{\pi}\int_{\mathbb{T}^{*}U\setminus\left\{ (\xi_{x},\xi_{x}):\xi_{x}\in S_{x}^{*}U\right\} }p'^{*}\omega^{2}\\
 & = & \frac{1}{\pi}\int_{\mathbb{T}^{*}U\setminus\left\{ (\xi_{x},\xi_{x}):\xi_{x}\in S_{x}^{*}U\right\} }\omega_{0}^{2}\\
 & = & \frac{2}{\pi}\int_{\left\{ (\xi_{x},\xi_{x}):\xi_{x}\in S_{x}^{*}U\right\} }\alpha_{0}\wedge\omega_{0}\\
 & = & \frac{2}{\pi}\int_{S^{*}U}\alpha_{0}\wedge\omega_{0}.\end{array}\label{eq:e3}\end{equation}
So the claim follows from \prettyref{eq:e1},\prettyref{eq:e2} and
\prettyref{eq:e3}.\end{proof}
\begin{acknowledgement*}
Thanks to J. Fu for some helpful discussions on this subject.\end{acknowledgement*}

\address{Department of Mathematics, University of Georgia, Athens, GA 30602,
U.S.A.}

\email{yliu@math.uga.edu}
\end{document}